\documentclass[11pt]{amsart}

\usepackage[utf8]{inputenc}
\usepackage{hyperref}
\usepackage{amsfonts}
\usepackage{amsmath}
\usepackage{amsthm}
\usepackage{float}
\usepackage{graphicx}

\usepackage{subfig}
\usepackage{biblatex}
\usepackage{witharrows}
\usepackage{mathtools}
\usepackage{comment}
\usepackage{caption}
\usepackage{geometry}
\usepackage{etoolbox}
\title[Rotation sets and entropy]{Rotation Sets and Topological Entropy for Random Circle Endomorphisms}

\author[Z. Li]{Zixu Li}
\address{School of Mathematics, Sun Yat-sen University, Zhuhai, China}
\email{zixu.li915@gmail.com}

\author[S. Lloyd]{Simon Lloyd}
\address{School of Mathematics and Physics, Xi'an Jiaotong-Liverpool University, Suzhou, China}
\email{Simon.Lloyd@xjtlu.edu.cn}

\author[S. Roma\~na]{Sergio Roma\~na}
\address{School of Mathematics, Sun Yat-sen University, Zhuhai, China}
\email{sergio@mail.sysu.edu.cn}

\numberwithin{equation}{section}
\newtheorem{theorem}{Theorem}[section]
\newtheorem{lemma}{Lemma}[section]
\newtheorem{proposition}{Proposition}[section]
\newtheorem{corollary}{Corollary}[section]
\newtheorem*{Q1}{Question~1} 
\newtheorem*{Q2}{Question~2} 
\theoremstyle{definition}
\newtheorem{definition}{Definition}[section]
\theoremstyle{remark}
\newtheorem{remark}{Remark}[section]
\newtheorem{example}{Example}[section]

\begin{document}

\begin{abstract}
We study the topological dynamics of random circle endomorphisms of degree one over an ergodic measure-preserving dynamical system. Under an integrability assumption, we prove that the random rotation set is almost surely the compact interval whose endpoints are the mean random rotation numbers of the associated lower and upper random maps.
We also show that the natural orbitwise versions of the random rotation set agree almost surely, and on the same full-measure set, every value in this interval is realised as the asymptotic average displacement along an individual orbit. In addition, every closed subinterval of the random rotation set is realised, on a full-measure set, as the set of accumulation values of the displacement averages along a single orbit. Finally, we prove that a positive length of the random rotation set implies a positive random topological entropy, in contrast to random monotone maps, which have zero random topological entropy. We illustrate the theory by computing the random rotation set and random topological entropy for a piecewise linear example.
\end{abstract}

\maketitle

\section{Introduction}
Rotation theory is one of the central tools in the study of one-dimensional dynamics. The concept of rotation number originates from Poincar\'e~\cite{Poincare1885}. For a degree one circle homeomorphism, the rotation number quantifies the average angular displacement along an orbit and can be used to characterise periodicity in the dynamics. For a continuous degree one circle endomorphism with no assumption of injectivity, Newhouse, Palis and Takens~\cite{Newhouseetal1983} showed that the average displacement along an orbit does not necessarily converge to a single rotation number, but the set of accumulation points of the average displacement along all orbits instead defines an interval called the rotation set. Ito~\cite{Ito1981} proved that the set of accumulation points of the average displacements along orbits is closed. Bam\'on, Malta, Pacifico and Takens~\cite{Bamonetal1984} showed that any subinterval of the rotation set can be realised as the set of accumulation points of the average displacement along a single orbit. A related orbitwise viewpoint was later developed by Alsed\`a, Ma\~nosas and Chas~\cite{Alseda2002}, who studied rotation sets associated with individual orbits and with orbit closures. The non-triviality of the rotation set is also related to the global complexity of the dynamics, particularly the appearance of positive topological entropy, as demonstrated by Misiurewicz~\cite{Misiurewicz1982}. Sharp lower bounds for the entropy in terms of the rotation set were later obtained by Alsed\`a, Llibre, Ma\~nosas and Misiurewicz~\cite{Alseda_1988}.

Rotation theory has also been developed for random systems, consisting of compositions of fibre maps composed according to the orbits of a measure-preserving base transformation. Analogues of the rotation number for random systems were first developed for order-preserving fibre maps. The extension of the rotation number to random circle homeomorphisms is due to Ruffino~\cite{Ruffino2000}, who proved its existence and independence of the initial fibre point. Li and Lu~\cite{LiLu2008} extended the theory to integrable random monotone lifts and proved that the random rotation number varies continuously with respect to the lift. For random continuous maps of the circle of degree one, the asymptotic average displacement depends on the initial fibre point, naturally leading to a set-valued invariant, called the random rotation set. Random rotation sets have been studied by Zhu~\cite{Zhu2023} for random maps homotopic to the identity and with bounded generators on $n$-dimensional tori.
In this paper, we analyse the structure of the random rotation set for random circle endomorphisms of degree one, the equivalence of several definitions and the realisation of values and subintervals of the random rotation set as the set of accumulation points of the average displacement along individual orbits.

A corresponding picture was developed by Glendinning, J\"ager and Stark~\cite{Glendinningetal2009} in the uniquely ergodically-forced setting, in which the base transformation is assumed to be a continuous and uniquely ergodic transformation of a compact space and the dependence of the lifts of the fibre maps on the base point is assumed to be continuous. An important special case is quasiperiodically-forced maps, for which the base transformation is an irrational rotation of the circle. They proved that the (fibred) rotation set is a compact interval and that every value is realised on a minimal set. We have weaker assumptions in this paper: we assume that the base system is an ergodic measure-preserving transformation and the dependence of the random lift on the base point is integrable. Hence, our realisation results are formulated for individual random orbits on a full-measure set, rather than for minimal sets of a compact skew product system.

Motivated by these works, we develop a random analogue of the classical rotation set theory for degree one circle endomorphisms. 
We develop a rotation theory for degree one random circle endomorphisms over an ergodic base transformation under the assumption that the generator of the random lifts is integrable. We introduce the associated random rotation set, defined as the topological closure of the set of upper limits of the asymptotic displacements along all orbits. Our first main result (see Theorem \ref{thm:rotset}) is that the random rotation set agrees almost surely with the compact interval whose endpoints are given by the mean random rotation numbers of the lower and upper random maps. We also prove that the closure in the definition is unnecessary, which generalises the result of Ito~\cite{Ito1981} to the random setting. Moreover, we show that, on a common full-measure set, several alternate formulations also produce the same set: the upper limit, the lower limit, ordinary and subsequential limit definitions all give rise to the same interval as the random rotation set. We also prove that every value in the interval is realised as the actual asymptotic displacement of an orbit of the original random endomorphism, and every closed subinterval is realised as the full set of accumulation points of the average displacement along a single orbit (see Corollary \ref{cor:realsubint}). This is a random analogue of the deterministic result of Bam\'on et al.~\cite{Bamonetal1984}. 

Lastly, we consider how the complexity of the dynamics of random circle endomorphisms relates to the rotation theory. For random circle endomorphisms with bounded generator, Zhu~\cite{Zhu2023} shows that the rotational entropy is zero, a concept first introduced by Botelho~\cite{Botelho1991} in the context of annulus maps. Here we consider the concept of random topological entropy (see \cite{Bogenschutz1993, Kifer1986}). Our final main result is that a non-degenerate random rotation set implies positive random topological entropy (see Theorem \ref{thm:positivetopentropy}). In contrast, for random monotone circle maps, the random topological entropy vanishes.

The proof of Theorem \ref{thm:rotset} uses a family of random monotone maps that interpolate between the lower and upper monotone maps. The key step is a coincidence argument for orbits of the interpolating family and orbits of the endomorphism, which shows that every value in the random rotation set can be realised by a random orbit of the original random circle endomorphism. The proof of Theorem \ref{thm:positivetopentropy} is obtained by converting the positive length of the random rotation set into finite-time full-cover blocks that occur with positive frequency along typical orbits of the base system. As an application, we consider a piecewise linear random circle endomorphism over a Bernoulli base transformation and compute explicitly the random rotation set and random topological entropy.

The paper is organised as follows. In Section~\ref{sec:main}, we introduce the basic setting and state the main results. In Section~\ref{sec:randomintnum} we develop the random interpolating construction, prove the interval structure theorem and realisation results for the random rotation set over an ergodic base system and record several basic properties of the random rotation set such as realisation results, change of lifts, monotonicity and cohomological invariance. In Section~\ref{sec:randomtopo} we prove that the positive topological entropy implication in the ergodic case and establish the zero entropy result for random circle monotone maps. The final section contains a brief discussion of the scope of the method and some open questions concerning invariant set realisation and symbolic entropy mechanisms.

\section{Notation and statements of main results}\label{sec:main}

Let $(\Omega,\mathcal{F},\mathbb{P},\sigma)$ be a probability measure-preserving dynamical system, let $\mathbb{S}^1=\mathbb{R}/\mathbb{Z}$ and let $\pi:\mathbb{R}\to \mathbb{S}^1$ denote the natural projection. Let $\mathrm{End}_1(\mathbb{S}^1)$ denote the set of continuous circle endomorphisms of degree one, let $\mathrm{Mon}_1(\mathbb{S}^1)$ denote the set of continuous monotone circle maps of degree one and let $\mathrm{Homeo}_1(\mathbb{S}^1)$ denote the set of degree one homeomorphisms of the circle.

A \emph{random circle endomorphism} over $(\Omega,\mathcal{F},\mathbb{P},\sigma)$ is a measurable map $\phi:\mathbb{N}_0\times\Omega\times \mathbb{S}^1\to\mathbb{S}^1$ with the following properties for each $n,m\in\mathbb{N}_0=\{0,1,2,\ldots\}$, $\omega\in\Omega$ and $x\in\mathbb{S}^1$:
\begin{enumerate}
\item $\phi(0,\omega,\cdot)=\mathrm{Id}_{\mathbb{S}^1}$;
\item $\phi(n,\omega,\cdot)\in \mathrm{End}_1(\mathbb{S}^1)$;
\item $\phi(m+n,\omega,x)=\phi(n,\sigma^m\omega,\phi(m,\omega,x))$.
\end{enumerate}
We define \emph{random monotone circle maps}, and \emph{random circle homeomorphisms} similarly, by replacing $\mathrm{End}_1(\mathbb{S}^1)$ in item 2 with $\mathrm{Mon}_1(\mathbb{S}^1)$ or $\mathrm{Homeo}_1(\mathbb{S}^1)$ respectively.
We denote the map $\phi(1,\omega,\cdot)$ by $f_\omega$ and the \emph{cocycle} $\phi(n,\omega,\cdot)$ by $f^{(n)}_\omega$, and so
\begin{align}\label{eq:cocycle}
\phi(n,\omega,\cdot) = f^{(n)}_\omega \coloneq f_{\sigma^{n-1}\omega}\circ \cdots \circ f_{\sigma\omega} \circ f_\omega.
\end{align}
The maps $f_\omega$ are called \emph{fibre maps} and $\sigma$ is called the \emph{base transformation}.

Let $\mathcal{E}(\Omega)$ denote the collection of measurable maps $f:\Omega\to \mathrm{End}_1(\mathbb{S}^1)$, where $\omega\mapsto f_\omega$. The map $f\in\mathcal{E}(\Omega)$ is called the \emph{generator} of the random circle endomorphism over $(\Omega,\mathcal{F},\mathbb{P},\sigma)$ defined by (\ref{eq:cocycle}).
Similarly, we write $\mathcal{M}(\Omega)\coloneq\{f:\Omega\to \mathrm{Mon}_1(\mathbb{S}^1)\text{ measurable}\}$ and $\mathcal{H}(\Omega)\coloneq\{f:\Omega\to \mathrm{Homeo}_1(\mathbb{S}^1)\text{ measurable}\}$.

 To analyse the asymptotic behaviour of the system, we lift the dynamics from the circle to the real line. Let $\mathrm{End}_1(\mathbb{R})$ denote the set of continuous maps $F:\mathbb{R}\to\mathbb{R}$ satisfying the \emph{degree one property} 
\begin{align}\label{eq:degree1}
F(x+1)=F(x)+1\qquad\textrm{for all }x\in\mathbb{R}.
\end{align}
We also use the notation $\mathrm{Mon}_1(\mathbb{R})=\{F\in\mathrm{End}_1(\mathbb{R}): F\textrm{ is non-decreasing}\}$ and $\mathrm{Homeo}_1(\mathbb{R})=\{F\in\mathrm{End}_1(\mathbb{R}): F\textrm{ is strictly increasing}\}$.
Given $f\in \mathrm{End}_1(\mathbb{S}^1)$, a lift of $f$ is a map $F\in \mathrm{End}_1(\mathbb{R})$ such that $\pi\circ F=f\circ \pi$. Lifts are unique up to an integer. The \emph{displacement} of a lift $F\in \mathrm{End}_1(\mathbb{R})$ is the continuous $1$-periodic function $\Delta F:\mathbb{R}\to\mathbb{R}$ given by $\Delta F(x)=F(x)-x$.

A \emph{random lift} of $f\in \mathcal{E}(\Omega)$ is a measurable map $F:\Omega\to \mathrm{End}_1(\mathbb{R})$
such that $F_\omega$ is a lift of $f_\omega$ for every $\omega\in\Omega$. The associated cocycle is defined by
$F_\omega^{(n)}=F_{\sigma^{n-1}\omega}\circ\cdots\circ F_\omega.$
We define $\widetilde{\mathcal{E}}(\Omega)$ as the collection of measurable maps $F:\Omega\to \mathrm{End}_1(\mathbb{R})$ that satisfy the \emph{integrability condition}
\begin{align}\label{eq:integrability}
\int_{\omega\in\Omega} \|F_\omega-\mathrm{Id}\|\:\mathrm{d}\mathbb{P}(\omega) < \infty,
\end{align}
where $\|\cdot\|$ denotes the uniform norm of the displacement function, and we identify elements of $\widetilde{\mathcal{E}}(\Omega)$ that are equal to $\mathbb{P}$-almost everywhere. 

Let $\widetilde{\mathcal{M}}(\Omega)$ denote the corresponding class of integrable random lifts of elements of $\mathcal{M}(\Omega)$. For $F\in \widetilde{\mathcal{M}}(\Omega)$, Li and Lu \cite{LiLu2008} proved that there exists an integrable function $\rho_F:\Omega\to\mathbb{R}$, called the \emph{random rotation number}, such that for any $x\in\mathbb{R}$,
\begin{equation}\label{eq:LiLu}
\lim_{n\to\infty} \frac{F^{(n)}_\omega(x)-x}{n} = \rho_F(\omega)\qquad\textrm{for a.e. }\omega\in\Omega.
\end{equation}
Moreover, they prove that $\rho_F$ depends continuously on $F$. If the base transformation $\sigma$ is ergodic, then $\rho_F$ is constant $\mathbb{P}$-almost everywhere, in which case
\begin{equation}
\rho_F(\omega)=\rho(F) \coloneq \int_\Omega \rho_F\,\mathrm{d}\mathbb{P}\qquad\textrm{for a.e. }\omega\in\Omega.
\end{equation}

\begin{remark}
In the deterministic case, since lifts of a monotone circle map only differ by an integer, the fractional part of the rotation number is independent of the choice of the lift.
For random monotone circle maps, Ruffino and Rodrigues~\cite{RodriguesRuffino2013} investigated the dependence of the random rotation number on the choice of lift, introducing a parametrized family of $(q,\alpha)$-random rotation numbers, for $(q,\alpha)\in\mathbb{R}^2$, defined in terms of a random lift $F_{(q,\alpha)}\in \widetilde{\mathcal{M}}(\Omega)$ specified by
$$
F_{(q,\alpha),\omega}(q)\in [\alpha,\alpha+1)\qquad\textrm{for all }\omega\in\Omega.
$$ 
Given any $f\in\mathcal{E}(\Omega)$ and $(q,\alpha)\in\mathbb{R}^2$, the random lift $F_{(q,\alpha)}$ satisfies the integrability condition and so $F_{(q,\alpha)}\in\widetilde{\mathcal{E}}(\Omega)$.
In this article, we do not require any specific choice of lift: we work with arbitrary random lifts $F$ satisfying the integrability condition \eqref{eq:integrability}. 
\end{remark}

By analogy with the definition used by Newhouse, Palis, Takens~\cite{Newhouseetal1983} for the rotation set of a single circle endomorphism, we propose the following definition of the random rotation set of a random circle endomorphism.

\begin{definition}\label{def:rotset}
Consider a random circle endomorphism over $(\Omega, \mathcal{F}, \mathbb{P}, \sigma)$ generated by $f\in\mathcal{E}(\Omega)$. The \emph{random rotation set} $\mathrm{Rot}_F$ of a random lift $F\in \widetilde{\mathcal{E}}(\Omega)$ is defined for each $\omega\in\Omega$ to be the closed set
\begin{align}\label{eq:rotset}
\mathrm{Rot}_F(\omega)\coloneq\overline{\left\{ \limsup_{n\to \infty}\frac{F^{(n)}_\omega(x)-x}{n}: x\in \mathbb{R}\right\}}.
\end{align}
\end{definition}

We define the \emph{upper random rotation set} $\mathrm{Rot}^+_F$, the \emph{lower random rotation set} $\mathrm{Rot}^-_F$, the \emph{ordinary random rotation set} $\mathrm{Rot}^{\mathrm{ord}}_F$ and the \emph{subsequential random rotation set} for each $\omega\in\Omega$ by
\begin{align}\label{eq:upperrotset}
\mathrm{Rot}^+_F(\omega)& \coloneq \left\{\limsup_{n\to \infty} \frac{F^{(n)}_\omega(x)-x}{n}: x\in \mathbb{R}\right\},\\
\mathrm{Rot}^-_F(\omega)& \coloneq\left\{\liminf_{n\to \infty} \frac{F^{(n)}_\omega(x)-x}{n}: x\in \mathbb{R}\right\},\\
\mathrm{Rot}^{\mathrm{ord}}_F(\omega)& \coloneq\left\{c\in \mathbb{R}: \exists\  x\in \mathbb{R} \textrm{ such that }\lim_{n\to \infty} \frac{F^{(n)}_\omega(x)-x}{n}=c\right\},\\
\mathrm{Rot}_F^{\mathrm{sub}}(\omega)& \coloneq \left\{
c\in\mathbb R:
\exists\, n_k\to\infty,\ \exists\, y_k\in\mathbb R
\text{ such that }
\lim_{k\to\infty}
\frac{F_\omega^{(n_k)}(y_k)-y_k}{n_k}
=c
\right\}.
\end{align}
See \cite{Alsedaetal1993} for corresponding definitions in the deterministic setting.
For convenience, when $F\in\widetilde{\mathcal{E}}(\Omega)$ is understood, we sometimes denote the average displacement over $n$ steps of the orbit of $x$ by
$$
A_n(\omega,x):=\frac{F_\omega^{(n)}(x)-x}{n}
$$
for $n\geq 1$, $\omega\in \Omega$ and $x\in \mathbb{R}$.

Circle endomorphisms are often studied in terms of their upper and lower maps, and we make use of the equivalent concepts for random circle endomorphisms. The \emph{lower map} $F_L$ and \emph{upper map} $F_U$ of $F\in \widetilde{\mathcal{E}}(\Omega)$ are defined as follows:
\begin{align}\label{eq:envelopes}
F_{\omega, L}(x) \coloneq \inf_{y \ge x} F_\omega(y)
\quad\textrm{and}\quad
F_{\omega, U}(x) \coloneq \sup_{y \le x} F_\omega(y).
\end{align}
Both $F_{L}$ and $F_{U}$ are monotonically increasing and degree one random monotone maps, so $F_L, F_U \in \widetilde{\mathcal{M}}(\Omega)$. By \eqref{eq:LiLu}, these random monotone maps admit integrable random rotation numbers, which we denote by $\rho_{F_L}$ and $\rho_{F_U}$, respectively. If the base transformation is ergodic, then $\rho_{F_L}$ and $\rho_{F_U}$ are constant almost everywhere, so for almost every $\omega\in\Omega$ the random rotation numbers $\rho_{F_L}(\omega)$ and $\rho_{F_U}(\omega)$ are equal to their mean values over the base space $\Omega$: that is,
\begin{align}
\rho_{F_L}(\omega)=\rho(F_L)\coloneq\int_\Omega \rho_{F_L}\,\mathrm{d}\mathbb{P}\quad\textrm{and}\quad\rho_{F_U}(\omega)=\rho(F_U)\coloneq\int_\Omega \rho_{F_U}\,\mathrm{d}\mathbb{P}.
\end{align}

Our first main result describes the structure of the random rotation set for any integrable lift $F\in\widetilde{E}(\Omega)$ of a random circle endomorphism over an ergodic base transformation. We prove that the random rotation set $\mathrm{Rot}_F$ is a random compact set almost surely equal to the closed interval bounded by the mean rotation numbers of the upper and lower maps. In this sense, the result extends the interval structure demonstrated by Glendinning, J\"ager and Stark~\cite{Glendinningetal2009} for uniquely ergodically-forced circle maps to the general ergodic random framework. We also show that the random rotation set is almost everywhere equal to the upper random rotation set, generalising the result of Ito~\cite{Ito1981} to the random setting. Moreover, we show that each value in the random rotation set can be realised by the orbit of some point.

\begin{theorem}[Structure and realisation of the random rotation set]\label{thm:rotset}
Let $F\in \widetilde{\mathcal{E}}(\Omega)$ be an integrable random lift of the generator of a random circle endomorphism over an ergodic probability measure-preserving dynamical system $(\Omega, \mathcal{F}, \mathbb{P}, \sigma)$. Then there exists a $\sigma$-invariant full-measure set $\Omega^*\subset \Omega$ such that for every $\omega\in \Omega^*$, the following hold:
\begin{enumerate}
\item The upper, lower, ordinary and subsequential random rotation sets agree on $\Omega^*$ and constitute an essentially constant compact interval:
\begin{align}
\mathrm{Rot}_F(\omega)=\mathrm{Rot}_F^+(\omega)=\mathrm{Rot}_F^-(\omega)=\mathrm{Rot}^{\mathrm{ord}}_F(\omega)=\mathrm{Rot}^{\mathrm{sub}}_F(\omega)=[\rho(F_L),\rho(F_U)], 
\end{align}
where $F_L$ and $F_U$ denote the lower and upper maps of $F$. In particular, the closure in the definition of $\mathrm{Rot}_F$ is redundant for every $\omega\in \Omega^*$. We denote this almost sure common interval by $\mathrm{Rot}(F)\coloneq [\rho(F_L), \rho(F_U)]$.
\item Each value in this interval is realised as the limit of the average displacement of an actual orbit. More precisely, for every value $c\in \mathrm{Rot}(F)$, there exists a point $x_c=x_c(\omega) \in [0, 1]$ such that
\begin{align}
\lim_{n\to\infty} \frac{F_\omega^{(n)}(x_c(\omega))-x_c(\omega)}{n}=c. 
\end{align}
\end{enumerate}
\end{theorem}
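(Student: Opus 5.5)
The plan is to prove two things on one full-measure $\sigma$-invariant set $\Omega^*$. First, every displacement average along any orbit, even along subsequences with varying initial points, is eventually trapped in $[\rho(F_L),\rho(F_U)]$. Second, every $c$ in this interval is realised as a genuine limit. Together these squeeze all five sets between $[\rho(F_L),\rho(F_U)]$ and itself.

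\textbf{Upper bound.} Since $F_{\omega,L}\le F_\omega\le F_{\omega,U}$ and $F_L,F_U$ are non-decreasing, induction gives
\[
F^{(n)}_{\omega,L}(x)\le F^{(n)}_\omega(x)\le F^{(n)}_{\omega,U}(x)
\]
for all $n$ and $x$. For a degree one monotone cocycle, $\sup_{x,y}|(F^{(n)}_{\omega,U}(x)-x)-(F^{(n)}_{\omega,U}(y)-y)|\le 1$. Hence the Li--Lu limit \eqref{eq:LiLu}, applied at one point, holds uniformly in $x$ on the full-measure set where it holds at $x=0$; the same is true for $F_L$. It follows that $\mathrm{Rot}^{\mathrm{sub}}_F(\omega)\subset[\rho(F_L),\rho(F_U)]$. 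Moreover $\mathrm{Rot}^{\mathrm{ord}}_F$, $\mathrm{Rot}^{+}_F$ and $\mathrm{Rot}^{-}_F$ are all contained in $\mathrm{Rot}^{\mathrm{sub}}_F$, and $\mathrm{Rot}^{\mathrm{sub}}_F$ is closed by a diagonal argument. Therefore the closure $\mathrm{Rot}_F$ is also contained in it.

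\textbf{Realisation.} For the reverse inclusion I would interpolate between the two envelopes. For $t\in[0,1]$ set
\[
F^t_\omega(x):=\max\bigl(F_{\omega,L}(x),\min(F_\omega(x)\vee\text{level},\dots)\bigr).
\]
A cleaner choice is the classical truncation: for each level, $F^{t}_{\omega}=\max(F_{\omega,L},\min(F_{\omega,U},\,\cdot\,))$ with a plateau construction. Concretely, I would take $F^t_\omega(x)=\min\bigl(F_{\omega,U}(x),\max(F_{\omega,L}(x),\,F_{\omega,L}(x)+t\,(F_{\omega,U}(x)-F_{\omega,L}(x)))\bigr)$ modified so that $F^t_\omega$ is non-decreasing and degree one, and coincides with $F_\omega$ off a union of intervals on which $F^t_\omega$ is constant. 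This mirrors the deterministic water-level maps of Misiurewicz and Bamón et al.: replace $F_\omega$ on each excursion above level $t$ by a constant. The required properties are:
\begin{enumerate}
\item the family is monotone in $t$;
\item $F^0=F_L$ and $F^1=F_U$;
\item $t\mapsto F^t$ is continuous in the $\widetilde{\mathcal{E}}$ norm, dominated by $\|F_U-F_L\|\le 2\|F-\mathrm{Id}\|+2$.
\end{enumerate}
By Li--Lu continuity, $t\mapsto\rho(F^t)$ is then continuous and monotone from $\rho(F_L)$ to $\rho(F_U)$, so some $t_c$ gives $\rho(F^{t_c})=c$.

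\textbf{Coincidence (main obstacle).} The difficulty is to find a point whose $F^{t_c}$-orbit only visits the set where $F^{t_c}_{\sigma^n\omega}=F_{\sigma^n\omega}$. Then
\[
F^{(n)}_\omega(x_c)=(F^{t_c})^{(n)}_\omega(x_c),
\]
which gives the limit $c$ by uniformity of the monotone rotation number. I would define the "flat" set $P_n$ of points whose orbit at step $n$ lands inside a plateau interval of $F^{t_c}_{\sigma^n\omega}$, where the two maps differ. At a plateau endpoint the maps agree, and the plateau collapses an interval $I$ to a single point. So if $x$ lands in a plateau interior, I would replace $x$ by the left endpoint of the preimage component. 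Nested compact sets $K_N=\{x\in[0,1]:$ the orbit avoids plateau interiors up to time $N\}$ are nonempty, because each $(F^{t_c})^{(N)}_\omega$ is monotone and surjective and one can choose the extremal preimage of each value. Their intersection then yields $x_c(\omega)\in[0,1]$. Measurability and countable unions over rational $t$ are not needed, since the construction is pointwise in $\omega$ once $\omega$ lies in the full-measure Li--Lu set for $F_L$, $F_U$ and $F^{t}$. I expect to handle all $c$ at once by using continuity in $t$ together with uniform convergence; failing that, I would use a countable dense set of $t$ and approximation, which still gives the ordinary limit because the uniform bound passes to $t_c$ via monotonicity. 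Finally, intersecting over $\sigma^k$ makes $\Omega^*$ invariant.
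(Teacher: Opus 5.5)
You have a genuine gap: the interpolating family is never actually defined, and the realisation step rests entirely on it. Outside that step your argument matches the paper's. The squeeze $F^{(n)}_{\omega,L}\le F^{(n)}_\omega\le F^{(n)}_{\omega,U}$, with uniform-in-$x$ convergence for the monotone envelopes, gives $\mathrm{Rot}^{\mathrm{sub}}_F(\omega)\subset[\rho(F_L),\rho(F_U)]$. Realising every $c$ as a genuine limit then closes the chain. Your leftmost-preimage argument for $K_N\neq\emptyset$ is also sound, provided the family has the plateau property.

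The explicit formula you write collapses. Since $F_{\omega,L}\le F_{\omega,L}+t(F_{\omega,U}-F_{\omega,L})\le F_{\omega,U}$, it is just the convex combination $F_{\omega,L}+t(F_{\omega,U}-F_{\omega,L})$, and this has no plateau property. For example, let $F_\omega$ increase on $[0,p]$, decrease on $[p,q]$ and increase on $[q,1]$. Then on an interval $(s,p)$ the map $F_{\omega,L}$ is flat while $F_{\omega,U}=F_\omega$ is strictly increasing. There, for $0<t<1$, the combination is strictly increasing and differs from $F_\omega$. The points where $F^t_\omega\neq F_\omega$ are then not in plateaus, the extremal-preimage trick yields no coincidence points, and no $F$-orbit need follow an $F^t$-orbit.

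The clause ``modified so that $F^t_\omega$ \dots\ coincides with $F_\omega$ off a union of intervals on which $F^t_\omega$ is constant'' assumes exactly what must be constructed. The water-level remark does not supply it either. Truncating at a single level, e.g.\ $\max(F_{\omega,L},\min(F_{\omega,U},\mu))$, gives a map equal to $F_{\omega,U}$ far to the left and to $F_{\omega,L}$ far to the right, so it is not degree one. You also do not say how levels would be normalised fibrewise so that $F^0=F_L$ and $F^1=F_U$, while keeping measurability in $\omega$, monotonicity in $t$ and $L^1$-continuity.

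The missing idea is the Glendinning--J\"ager--Stark family the paper uses:
\[
F_{\omega,t}(x)=\inf_{y\ge x}\sup_{z\in[y-t,y]}F_\omega(z).
\]
Its parameter is a window length, so it is fibre-independent, and it has every property you need:
\begin{itemize}
\item it equals $F_L$ at $t=0$ and $F_U$ at $t=1$, using the degree one property;
\item it is measurable in $\omega$, since the extrema may be taken over rationals;
\item $\|\Delta F_{\omega,t}\|\le\|\Delta F_\omega\|$, so dominated convergence gives $L^1$-continuity in $t$;
\item it is non-decreasing in $t$, because the windows are nested;
\item it has the plateau property.
\end{itemize}
For the plateau property, suppose $F_{\omega,t}(x)\neq F_\omega(x)$. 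Either $F_{\omega,t}(x)<\sup_{[x-t,x]}F_\omega$, or equality holds and that supremum exceeds $F_\omega(x)$, so it is attained strictly left of $x$. In both cases $F_{\omega,t}$ is locally constant at $x$. With this family your coincidence argument goes through.

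Separately, your first suggestion for treating all $c$ at once (no countable intersection over rational $t$ needed) fails. The Li--Lu null set for $F^t$ depends on $t$, and there are uncountably many $t_c$; Li--Lu also requires $\omega\mapsto F^t_\omega$ to be measurable. Your fallback is what is needed, and it is exactly Lemma~\ref{lem:simconvergence}: intersect over rational $t$, squeeze $F^{(n)}_{\omega,p}\le F^{(n)}_{\omega,t}\le F^{(n)}_{\omega,q}$, and use continuity of $t\mapsto\rho(F_t)$.
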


The proof is given in Section~\ref{sec:randomintnum}. We also use an interpolating family and coincidence sets as in \cite{Glendinningetal2009}, but their roles are different in the ergodic setting. The key points are the family of random monotone maps that interpolate between $F_L$ and $F_U$ of the random circle endomorphism $F$, the $L^1$-continuity of the random rotation number for random circle monotone maps and a coincidence set argument which transfers orbit information from the interpolating family to the original non-invertible map. Since uniform convergence over a compact base space is not available, we first construct a common full-measure set on which the limiting average displacement exists for all the random monotone map in the interpolating family. This simultaneous convergence is also used to remove the closure in the definition of the random rotation set and to identify it with the other types of random rotation set.

The realisation of single values in the random rotation set in Theorem~\ref{thm:rotset} can be extended into a statement concerning the realisation of subintervals. The next result, which is proved in Section \ref{sec:realsubint}, is a random analogue of the deterministic subinterval realisation theorem in \cite{Bamonetal1984}.

\begin{theorem}(Realisation of upper and lower asymptotic displacement averages)\label{thm:realupperlower}
Let $F\in \widetilde{\mathcal{E}}(\Omega)$ be an integrable random lift of the generator of a random circle endomorphism over an ergodic probability measure-preserving dynamical system $(\Omega, \mathcal{F}, \mathbb{P}, \sigma)$. For any $a, b\in [\rho(F_L), \rho(F_U)]$ with $a\leq b$, there exists a full-measure set $\Omega_{a, b}\subset \Omega$ such that for every $\omega\in \Omega_{a, b}$ there exists $x_{a, b}(\omega)\in \mathbb{R}$ satisfying 
$$\liminf_{n\to \infty}\frac{F^{(n)}_\omega(x)-x}{n}=a \quad\textrm{and}\quad 
\limsup_{n\to \infty}\frac{F^{(n)}_\omega(x)-x}{n}=b.$$
\end{theorem}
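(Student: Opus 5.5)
Our plan is to construct the orbit $x_{a,b}(\omega)$ by concatenating long stretches of orbits with rotation numbers $a$ and $b$, using the interpolating monotone maps from the proof of Theorem~\ref{thm:rotset}. For $c\in[\rho(F_L),\rho(F_U)]$ we have random monotone lifts $G^c\in\widetilde{\mathcal{M}}(\Omega)$ with $F_L\le G^c\le F_U$ and $\rho(G^c)=c$. On the coincidence set $\{x: G^c_\omega(x)=F_\omega(x)\}$ their orbits are genuine $F$-orbits. We take $\Omega_{a,b}$ to be the intersection of $\Omega^*$ from Theorem~\ref{thm:rotset} with the full-measure sets where the Li--Lu limit \eqref{eq:LiLu} holds for $G^a$ and $G^b$ on every fibre $\sigma^m\omega$, $m\ge 0$. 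This is a countable intersection, hence of full measure. On this set the convergence is uniform in $x$, because $|G^{(n)}_\omega(x)-x-(G^{(n)}_\omega(y)-y)|\le 2$ for monotone degree one lifts. We also require the Birkhoff averages of $\|F_\omega-\mathrm{Id}\|$ to converge, so that $\|F_{\sigma^n\omega}-\mathrm{Id}\|/n\to0$.

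The key tool is a steering property. For $\omega\in\Omega_{a,b}$, $c\in\{a,b\}$, a time $m$, a target length $N$ and any compact interval $J$ of length $\ge1$, we want a closed interval $J'\subset J$ such that every $y\in J'$ follows a genuine $F$-orbit from time $m$ to $m+N$ whose displacement matches the $G^c$-displacement up to an additive error $C$ independent of $N$. We would obtain this as in the proof of Theorem~\ref{thm:rotset}(2). The interpolating maps have the following structure: on each maximal interval where $G^c_{\sigma^k\omega}\ne F_{\sigma^k\omega}$ the map $G^c$ is constant, and $F$ covers that constant value inside the interval. By the intermediate value theorem and degree one, the set of points whose $F^{(k)}$-image lies in a given interval of length $1$ contains an interval. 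Inductively we obtain nested closed intervals of points whose $F$-orbit stays within bounded distance (at most $1$) of a $G^c$-orbit at each step. The covering property of degree one maps, that $F(I)\supset$ an interval of length $\ge1$ whenever $|I|\ge1$, lets us pull back through each step. This preimage-of-nested-intervals mechanism is the same one used by Bam\'on et al.\ in the deterministic case.

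Next we choose times $0=t_0<t_1<t_2<\cdots$ growing super-exponentially, for example $t_{k+1}\ge k\,t_k$, and on $[t_{2j},t_{2j+1})$ steer with $c=a$, on $[t_{2j+1},t_{2j+2})$ with $c=b$. Applying the steering property successively from $J_0=[0,1]$ gives nested nonempty compact intervals $J_0\supset J_1\supset\cdots$. Their intersection contains a point $x$ whose orbit follows the prescribed pattern. Because each block is much longer than everything before it, and the errors per block are bounded (hence $o(t_k)$), we get $A_{t_{2j+1}}(\omega,x)\to a$ and $A_{t_{2j}}(\omega,x)\to b$. Since the block averages are $a$ or $b$, convex-combination and uniform-convergence estimates show every $A_n$ lies asymptotically in $[a,b]$. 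Hence $\liminf A_n=a$ and $\limsup A_n=b$.

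The main obstacle is the steering step. The orbits realised in Theorem~\ref{thm:rotset} start from a point $x_c(\omega)$ and are not obviously controllable from an arbitrary interval at an arbitrary time $\sigma^m\omega$. So the pullback argument must keep intervals of length at least comparable to $1$ at the start of each block, and the per-block additive error must be bounded uniformly, not just $o(N)$. If exact coincidence fails, we would instead use the bound $F_L\le F\le F_U$ together with the fact that monotone maps sandwiching a point preserve displacement up to $1$. We would also use the uniformity of the convergence for $G^c$ on the fibres $\sigma^{t_k}\omega$, which is secured by the choice of $\Omega_{a,b}$, to control block averages starting at the shifted times.
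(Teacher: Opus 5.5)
Your architecture (alternating long blocks steered by monotone maps with rotation numbers $a$ and $b$, nested compact sets, rapidly growing block lengths, per-block errors that are $o(t_k)$) is viable and differs from the paper's. However, the steering lemma it rests on is not established by what you wrote.

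It cannot be obtained ``as in the proof of Theorem~\ref{thm:rotset}(2)''. That proof gives exact coincidence sets, which are closed, $\mathbb{Z}$-invariant and have full coverage, but need not contain any interval. For example, let $F_\omega=c+\phi$, where $c$ is the degree-one extension of the Cantor function and $\phi\ge 0$ is continuous, $1$-periodic and vanishes exactly on the Cantor set $K$. Then $F_{\omega,L}=c$ and $S^1_{\omega,0}=K+\mathbb{Z}$. The covering property you invoke also does not suffice: it says $F(I)$ contains \emph{some} unit interval when $|I|\ge 1$, but not where that interval lies. So pulling back along it does not keep $F^{(k)}(y)$ within distance $1$ of $z_k=G^{(k)}(z_0)$. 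The plateau remark only gives a point $w$ near $z_k$ with $F(w)=G(z_k)$. It does not give a subinterval of your current interval mapped over a unit interval at $G(z_k)$.

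The missing ingredient is a localised covering. Since $F_{\eta,L}(z)=\min_{[z,z+1]}F_\eta$ and $F_{\eta,U}(z)+1=F_{\eta,U}(z+1)=\max_{[z,z+1]}F_\eta$, you have $F_\eta([z,z+1])=[F_{\eta,L}(z),F_{\eta,U}(z)+1]\supseteq[G_\eta(z),G_\eta(z)+1]$ for every $G_\eta$ with $F_{\eta,L}\le G_\eta\le F_{\eta,U}$. With this, the argument closes as follows:
\begin{enumerate}
\item Take $\alpha\le\beta$ with $\rho(F_\alpha)=a$ and $\rho(F_\beta)=b$. Let $H^{(n)}_\omega$ be the block-alternating composition of the $\alpha$- and $\beta$-interpolating maps, and set $z_n=H^{(n)}_\omega(0)$ and $K_n=[z_n,z_n+1]$.
\item Then $F_{\sigma^n\omega}(K_n)\supseteq K_{n+1}$. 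The standard pull-back gives nested compact intervals $L_n\subseteq[0,1]$ with $F^{(k)}_\omega(L_n)\subseteq K_k$ for $k\le n$.
\item Any $x\in\bigcap_n L_n$ satisfies $|F^{(n)}_\omega(x)-x-H^{(n)}_\omega(0)|\le 1$ for all $n$.
\item The endpoint limits follow by choosing block lengths as the paper does. The inner bounds follow from the sandwich $F^{(n)}_{\omega,\alpha}(0)\le H^{(n)}_\omega(0)\le F^{(n)}_{\omega,\beta}(0)$.
\end{enumerate}
Your ``uniform convergence on the fibres $\sigma^{t_k}\omega$'' alone does not control times early in a block. Before $N_c(\sigma^{t_k}\omega,\varepsilon)$ steps have elapsed it says nothing, and that threshold may be far longer than $t_k$. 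The sandwich, or the identity $G^{(r)}_{\sigma^t\omega}(y)-y=G^{(t+r)}_\omega(0)-G^{(t)}_\omega(0)+O(1)$, is needed there.

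The paper avoids any tracking error. Its block coincidence sets $C^{(m)}_\omega$, built from Proposition~\ref{pro:coincidence}, are closed, $\mathbb{Z}$-invariant and satisfy $F^{(T_m)}_\omega(C^{(m)}_\omega)=\mathbb{R}$. So its chosen point has $F^{(n)}_\omega(x)=H^{(n)}_\omega(x)$ exactly. Your repaired route is more elementary and more general. It needs neither the plateau property nor coincidence sets, and works for any monotone maps squeezed between $F_L$ and $F_U$. The cost is an $O(1)$ error, which vanishes after dividing by $n$.
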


\begin{corollary}(Realisation of closed subintervals)\label{cor:realsubint}
Assume the same hypotheses on the base system and the random lift $F$ as in Theorem~\ref{thm:realupperlower}. Let $I=[a, b]\subset [\rho(F_L), \rho(F_U)]$. Then there exists a full-measure set $\Omega_I \subset \Omega$ such that, for a.e. $\omega \in \Omega_I$, there exists $x_I(\omega)\in \mathbb{R}$ such that
\begin{align*}
\operatorname{Clust}\left\{\frac{F^{(n)}_\omega(x_I(\omega))-x_I(\omega)}{n}: n\geq 1\right\}=I,
\end{align*}
where $\operatorname{Clust}\{u_n: n\geq 1\}$ denotes the set of all limits of subsequences of $u_n$.
\end{corollary}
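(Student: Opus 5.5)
The plan is to deduce the corollary directly from Theorem~\ref{thm:realupperlower}, with one added fact: the cluster set of the displacement averages along any orbit is always a closed interval. Set $\Omega_I\coloneq\Omega_{a,b}\cap\Omega_0$, where $\Omega_{a,b}$ is the full-measure set supplied by Theorem~\ref{thm:realupperlower} and $\Omega_0$ is the full-measure set on which the Birkhoff averages of $\omega\mapsto\|F_\omega-\mathrm{Id}\|$ converge. For $\omega\in\Omega_I$ I take $x_I(\omega)\coloneq x_{a,b}(\omega)$ and write $u_n\coloneq A_n(\omega,x_I(\omega))$. By Theorem~\ref{thm:realupperlower}, $\liminf u_n=a$ and $\limsup u_n=b$, both finite. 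Since $\liminf$ and $\limsup$ are the smallest and largest subsequential limits, $\{a,b\}\subset\operatorname{Clust}\{u_n\}\subset[a,b]$. It remains to show every $c\in(a,b)$ is a cluster value.

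The key estimate is that consecutive averages become close, i.e.\ $u_{n+1}-u_n\to 0$. Write $x_n=F^{(n)}_\omega(x_I)$. Then
\[
(n+1)u_{n+1}-nu_n=x_{n+1}-x_n=\Delta F_{\sigma^n\omega}(x_n),
\]
so $|(n+1)u_{n+1}-nu_n|\le g(\sigma^n\omega)$ with $g(\omega)=\|F_\omega-\mathrm{Id}\|\in L^1$. From this,
\[
|u_{n+1}-u_n|\le \frac{g(\sigma^n\omega)}{n+1}+\frac{|u_n|}{n+1}.
\]
The second term tends to $0$ because $(u_n)$ is bounded, having finite $\liminf$ and $\limsup$. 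For the first term, Birkhoff's theorem gives $\frac1n\sum_{k<n}g(\sigma^k\omega)\to\int g\,d\mathbb{P}$ on $\Omega_0$. Hence
\[
\frac{g(\sigma^n\omega)}{n}=\frac{n+1}{n}\cdot\frac{1}{n+1}\sum_{k\le n}g(\sigma^k\omega)-\frac1n\sum_{k<n}g(\sigma^k\omega)\to 0.
\]
Ergodicity is not even needed for this step.

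To finish, take a bounded real sequence with $u_{n+1}-u_n\to0$, $\liminf=a$ and $\limsup=b$. Fix $c\in(a,b)$ and $\varepsilon>0$. The sequence drops below $c$ and rises above $c$ infinitely often, with step sizes eventually less than $\varepsilon$, so it enters $(c-\varepsilon,c+\varepsilon)$ infinitely often. Therefore $c\in\operatorname{Clust}\{u_n\}$, and $\operatorname{Clust}\{u_n\}=[a,b]=I$.

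The only step needing care is controlling $g(\sigma^n\omega)/n$: without a bounded generator, the individual displacement increments could be large. The Birkhoff-difference argument above handles this, and it is why $\Omega_0$ is intersected into $\Omega_I$. The degenerate case $a=b$ is immediate from Theorem~\ref{thm:realupperlower}, or from part~(2) of Theorem~\ref{thm:rotset}.
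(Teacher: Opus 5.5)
Your proposal is correct and follows essentially the same route as the paper: invoke Theorem~\ref{thm:realupperlower}, show $u_{n+1}-u_n\to 0$ via the bound $|u_{n+1}-u_n|\le \psi(\sigma^n\omega)/(n+1)+|u_n|/(n+1)$, and finish with a discrete intermediate-value argument. The differences are minor and both versions are valid: you obtain $\psi(\sigma^n\omega)/n\to 0$ from the Birkhoff telescoping identity instead of the paper's Borel--Cantelli argument, and you get boundedness of $(u_n)$ from its finite $\liminf$ and $\limsup$ instead of from a Birkhoff bound.
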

    
Now we turn to random topological entropy. In the deterministic theory of degree one circle endomorphisms, a non-trivial rotation set signifies topological complexity. In the random setting, deducing positivity of entropy from a positive length random rotation set requires an additional argument to obtain the growth rates of the size of $\varepsilon$-separated sets from the non-uniform asymptotic displacement information provided by the random rotation set.

Firstly, we consider random circle monotone maps. Since these maps are order-preserving, the random rotation set reduces to a singleton and the random topological entropy is zero. 

\begin{proposition}\label{pro:zeroentropy}
Let $(\Omega, \mathcal{F},\mathbb{P},\sigma)$ be a measure-preserving dynamical system and let $f \in \mathcal{M}(\Omega)$ be the generator of a random monotone circle map. Then $f$ has zero random topological entropy.
\end{proposition}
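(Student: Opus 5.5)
The plan is to bound the growth of $(n,\varepsilon)$-separated sets of the fibre cocycle polynomially in $n$, uniformly in $\omega$. We use the standard definition of random topological entropy (Bogensch\"utz, Kifer): for $\varepsilon>0$ let $S(n,\varepsilon,\omega)$ be the maximal cardinality of a set $E\subset\mathbb{S}^1$ such that for any distinct $x,y\in E$ there is $0\le k<n$ with $d(f^{(k)}_\omega x,f^{(k)}_\omega y)>\varepsilon$. The entropy is $\lim_{\varepsilon\to0}\limsup_n \frac1n\log S(n,\varepsilon,\omega)$, integrated over $\omega$ (or taken a.e.). It therefore suffices to prove that for every $\varepsilon>0$ there is a constant $C_\varepsilon$, independent of $\omega$, with
\[
S(n,\varepsilon,\omega)\le C_\varepsilon\, n \quad\text{for all } n\ge1,\ \omega\in\Omega .
\]
This needs no integrability or ergodicity and so matches the hypotheses.

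The key step is the deterministic fact that for a composition $g=g_{k}\circ\cdots\circ g_1$ of degree one monotone circle maps the following hold. First, $g$ is again a degree one monotone map. Second, if $I\subset\mathbb{S}^1$ is an arc of length less than $1$ (or less than $1/2$), then $g(I)$ is an arc. Third, the images of disjoint arcs under a lift $G$ of $g$ overlap only at endpoints or on collapsed plateaus. We use the lift picture throughout. A non-decreasing $G$ with $G(x+1)=G(x)+1$ maps a partition of $[x_0,x_0+1]$ into consecutive intervals to consecutive intervals whose lengths sum to exactly $1$.

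The counting argument is as follows. Fix $\varepsilon<1/4$ and $n$, and let $E$ be $(n,\varepsilon)$-separated with points $x_1<\dots<x_m$ in cyclic order on a lift. For each consecutive pair $(x_i,x_{i+1})$ (cyclically), separation gives a time $k_i<n$ with $d(f^{(k_i)}_\omega x_i,f^{(k_i)}_\omega x_{i+1})>\varepsilon$. At that time the lifted arc $G^{(k_i)}_\omega([x_i,x_{i+1}])$ has length greater than $\varepsilon$, by monotonicity of the lift together with the fact that the circle distance is bounded by the lifted length. For each fixed $k$, the arcs $[x_i,x_{i+1}]$ partition one period, so their images under $F^{(k)}_\omega$ have lengths summing to $1$. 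Hence at most $1/\varepsilon$ indices $i$ can have $k_i=k$. Summing over $k=0,\dots,n-1$ gives $m\le n/\varepsilon$, so $\frac1n\log S(n,\varepsilon,\omega)\le \frac1n\log(n/\varepsilon)\to0$, and the entropy vanishes for every $\omega$, hence also in integrated form.

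The main obstacle is making the passage from circle distance to lifted length fully rigorous, particularly on the wrap-around arc $[x_m,x_1+1]$. The point is that the circle distance between the images is at most the length of the lifted image interval, which always holds since $d(\pi a,\pi b)\le|a-b|$. A further check is that the argument is indifferent to whether the definition uses $\max_{k<n}$ or $\max_{k\le n}$, which changes only $n$ to $n+1$. Measurability issues of $S(n,\varepsilon,\cdot)$ are harmless because the bound is uniform in $\omega$.
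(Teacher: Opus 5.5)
Your proposal is correct and follows the paper's argument: order the separated set cyclically on a lift, and assign each adjacent pair a time at which it separates. Since the oriented lengths $F^{(k)}_\omega(x_{i+1})-F^{(k)}_\omega(x_i)$ under the non-decreasing degree-one lift telescope to $1$, at most $1/\varepsilon$ pairs share a time, so $s(n,\varepsilon,\omega,f)\le n/\varepsilon$ uniformly in $\omega$ (your auxiliary claims about images of arcs and overlaps on plateaus are not needed, since the telescoping identity alone suffices).
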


\begin{remark}
The monotone case is quite different from the non-invertible case. For random monotone circle maps, the random rotation set reduces to a singleton, while in the non-invertible case it may have non-trivial length. The positive entropy result proved later therefore belongs to a more general setting.
\end{remark}

The length of the set $\mathrm{Rot}(F)$ measures the spread in the asymptotic displacements across different orbits. Our final main result shows that, in the ergodic case, a positive length of the random rotation set implies a positive random topological entropy.

\begin{theorem}\label{thm:positivetopentropy}
Let $F\in \widetilde{\mathcal{E}}(\Omega)$ be an integrable random lift of the generator of a random circle endomorphism over an ergodic probability measure-preserving dynamical system $(\Omega, \mathcal{F}, \mathbb{P}, \sigma)$. If the random rotation set $\mathrm{Rot}(F)$ is non-degenerate, then $f$ has positive random topological entropy.
\end{theorem}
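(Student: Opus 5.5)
We want to show: if $\rho(F_L)<\rho(F_U)$, then $h_{\mathrm{top}}(f)>0$. The plan is to convert the rotation gap into finite-time full-cover blocks occurring with positive frequency, and to count separated orbits from them.

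\textbf{Step 1: orbits with different rotation rates.} Fix rationals $p/q$ with $\rho(F_L)<a<b<\rho(F_U)$. By Theorem~\ref{thm:rotset}(2), for $\omega\in\Omega^*$ there are points $x_a,x_b\in[0,1]$ with
\[
F^{(n)}_\omega(x_a)-x_a\sim na \quad\text{and}\quad F^{(n)}_\omega(x_b)-x_b\sim nb .
\]
Since $F^{(n)}_\omega$ is continuous on $[x_a,x_b]$ (or on $[x_b,x_a+1]$), the intermediate value theorem gives the following. For large $n$, the image $F^{(n)}_\omega([0,1])$ contains an interval of length at least $n(b-a)-2\geq 2$. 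By the degree one property it therefore covers at least two full fundamental domains.

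\textbf{Step 2: uniform blocks.} The difficulty is that $n$ depends on $\omega$. I will use measurability and Egorov's theorem to fix one $N$ and a set $B\subset\Omega$ with $\mathbb{P}(B)>0$. For every $\omega\in B$, I need two disjoint closed intervals $J_1,J_2\subset[0,1)$ and a uniform $\delta$ such that $F^{(N)}_\omega(J_i)\supset[0,1]+\mathbb{Z}$-translates, i.e.\ $\pi F^{(N)}_\omega(J_i)=\mathbb{S}^1$.

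To get disjointness, I will start from an interval of $F^{(N)}_\omega$-image length at least $3$ and split it at a point where the image has length at least $1.5$ on each side. The separation between the $J_i$ is then bounded below by some $\delta>0$, uniformly on a positive-measure subset. This uses the integrability bound on $\|F_\omega-\mathrm{Id}\|$ to control moduli of continuity on a large-measure set. I may also need to shrink $B$ so that the Lipschitz-free continuity modulus of $F^{(N)}_\omega$ is uniform; this is again by Egorov/Lusin.

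\textbf{Step 3: symbolic counting.} By Birkhoff's theorem applied to $\mathbf{1}_B$ along the times $0,N,2N,\dots$ (ergodicity of $\sigma$; if $\sigma^N$ is not ergodic, I will use the ergodic decomposition or choose the return times greedily), for a.e.\ $\omega$ the number of disjoint blocks $[kN,(k+1)N)$ with $\sigma^{kN}\omega\in B$ up to time $n$ is at least $cn$, where $c>0$. Each such block gives a horseshoe step: the interval currently tracked is mapped, after the intervening non-block times, onto something containing the whole circle. This is because surjectivity is preserved under composition with degree one maps, since degree one circle maps are onto. Then one can choose $J_1$ or $J_2$ freely.

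By nested compactness, each word in $\{1,2\}^{\lfloor cn\rfloor}$ gives a point whose orbit visits the prescribed $J_{i}$ at those block times. Two distinct words give orbits $\delta$-separated at some time $\le n$. Hence the maximal cardinality of $(\omega,\delta,n)$-separated sets is at least $2^{cn}$, so the random topological entropy (Bogensch\"utz/Kifer) is at least $c\log2>0$.

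\textbf{Main obstacle.} I expect Step 2 to be the main obstacle: extracting uniform $N$, $\delta$ and the intervals from the non-uniform, $\omega$-dependent limits of Theorem~\ref{thm:rotset}. I would first try to define, for each $N$, the measurable set of $\omega$ for which the covering property holds with gap $\delta$. Their union over $N$ and $\delta\in\mathbb{Q}$ has full measure by Step 1, so some member has positive measure.
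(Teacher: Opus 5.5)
Your outline is correct and would prove the theorem, but it takes a genuinely different route from the paper at two points.

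First, you get linear growth of $\operatorname{diam}F^{(n)}_\omega([0,1])$ from two orbits realising values $a<b$, via Theorem~\ref{thm:rotset}(2). The paper instead uses Lemma~\ref{lem:upperlowercomposition} to write $D_n(\omega)=\operatorname{diam}F^{(n)}_\omega([0,1])=F^{(n)}_{\omega,U}(0)+1-F^{(n)}_{\omega,L}(0)$. It then applies the Li--Lu limits for $F_L,F_U$ to obtain $D_n/n\to\rho(F_U)-\rho(F_L)$, with no interpolation or coincidence machinery. Both routes are fine.

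Second, and more substantively, you stay on $\mathbb{S}^1$. You therefore need $\omega$-dependent disjoint arcs, each covering the circle under $f^{(N)}_\omega$, with a gap that is uniform on a set of positive measure. The paper avoids this: it sets $A=\{D_N>4\}$ and passes to the fourfold cover $T^1_4=\mathbb{R}/4\mathbb{Z}$. There, for every $\omega\in A$, the fixed arcs $I_1=[0,1]$ and $I_3=[2,3]$ both map onto $T^1_4$ and lie at a fixed distance $\eta_0$. The price is Lemma~\ref{lem:finitecover}, that finite covers do not change random topological entropy. Your version saves that lemma and pays with a measurable-selection argument. The symbolic counting afterwards is the same, and $\omega$-dependence of the arcs is harmless there provided the gap is uniform.

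Step 2 as you justify it does not work, however. Cutting at a single point gives two intervals sharing an endpoint, and the gap across $0\equiv 1$ must also be controlled. Integrability of $\|F_\omega-\mathrm{Id}\|$ bounds displacement, not the modulus of continuity, so it yields no uniform gap. A uniform modulus on a large set would instead come from tightness of the law of $\omega\mapsto\Delta F^{(N)}_\omega$ in $C(\mathbb{S}^1)$ plus Arzel\`a--Ascoli.

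The clean fix is the one in your last paragraph. For rational $\delta>0$, let $B_{N,\delta}$ be the set of $\omega$ for which there are rationals $u_1<v_1<u_2<v_2<u_1+1$ satisfying:
\begin{itemize}
\item $u_2-v_1\ge\delta$ and $u_1+1-v_2\ge\delta$;
\item $\operatorname{osc}_{[u_i,v_i]}F^{(N)}_\omega\ge 1$ for $i=1,2$.
\end{itemize}
Each $B_{N,\delta}$ is measurable, being a countable union of sets defined by suprema and infima over rational points.

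Moreover $\{D_N>3\}\subset\bigcup_\delta B_{N,\delta}$. To see this, take $p\in[0,1]$ minimising $F^{(N)}_\omega$ on $[0,1]$, with $m=F^{(N)}_\omega(p)$. Take a maximiser on $[0,1]$ and translate it by an integer to $P\in(p,p+1)$, so that $F^{(N)}_\omega(P)-m>3$. Cut $[p,P]$ at the first-passage times of the levels $m+\tfrac{1}{10}$, $m+\tfrac{11}{10}$, $m+\tfrac{19}{10}$ and $m+\tfrac{29}{10}$. Then enlarge slightly to rational endpoints. Both gaps are positive because the first passage of $m+\tfrac{1}{10}$ lies strictly after $p$, and $P<p+1$.

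Finally, in Step 3 use greedy return times, as the paper does, rather than Birkhoff for $\sigma^N$, which need not be ergodic. Greedy selection gives block density at least $\mathbb{P}(B)/N$ from ergodicity of $\sigma$ alone.
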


The proof is given in Section~\ref{sec:randomtopo}. Non-degeneracy of the random rotation set means that $\rho(F_U)>\rho(F_L)$. In the proof, we use the positive length of the random rotation set to construct a positive density family of full-cover blocks and then use a symbolic coding argument on a finite cover of the circle.

\section{Random interpolating maps and random rotation numbers}\label{sec:randomintnum}

To study the random rotation set, we establish an algebraic property for the upper and lower maps of degree one circle endomorphisms, and then extend the deterministic interpolating map construction of Glendinning, J\"ager, and Stark~\cite{Glendinningetal2009} to the measurable setting.

\subsection{Composition of monotone envelopes}
For a deterministic degree-one lift $G\in \mathrm{End}_1(\mathbb{R})$, its lower and upper maps are defined respectively by 
\begin{align}
G_L(x)\coloneq\inf_{y\ge x} G(y)
\quad\textrm{and}\quad 
G_U(x)\coloneq\sup_{y\le x} G(y).
\end{align}
Clearly, both $G_L$ and $G_U$ belong to $\mathrm{Mon}_1(\mathbb{R})$ and $G_L\leq G\leq G_U$. The following lemma shows that taking the upper or lower envelope commutes with function composition.

\begin{lemma}\label{lem:upperlowercomposition}
Let $F, G\in \mathrm{End}_1(\mathbb{R})$. We have
\begin{align}
(G\circ F)_U=G_U \circ F_U \quad\textrm{and}\quad (G\circ F)_L=G_L \circ F_L.
\end{align}
\end{lemma}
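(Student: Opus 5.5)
We prove only the identity for upper maps. The lower-map identity then follows by the symmetry $x\mapsto -x$. Indeed, if $\tilde G(x)\coloneq -G(-x)$, then $\tilde G\in\mathrm{End}_1(\mathbb{R})$ and $\tilde G_U(x)=\sup_{y\le x}(-G(-y))=-\inf_{z\ge -x}G(z)=-G_L(-x)$. Conjugation by $x\mapsto -x$ also respects composition, since $\widetilde{G\circ F}=\tilde G\circ\tilde F$.

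For the upper identity, we fix $x$ and prove the two inequalities $(G\circ F)_U(x)\le G_U(F_U(x))$ and $(G\circ F)_U(x)\ge G_U(F_U(x))$.

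\emph{Upper bound.} Let $y\le x$. Then $F(y)\le F_U(y)\le F_U(x)$, because $F_U$ is non-decreasing. By the definition of $G_U$, this gives $G(F(y))\le G_U(F(y))$. Since $G_U$ is non-decreasing, we get $G_U(F(y))\le G_U(F_U(x))$. Taking the supremum over $y\le x$ yields $(G\circ F)_U(x)\le G_U(F_U(x))$.

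\emph{Lower bound.} This is the step needing care. Since $F(y)-y$ is continuous and $1$-periodic, it is bounded, so $F(y)\to-\infty$ as $y\to-\infty$. Hence the supremum defining $F_U(x)$ is attained: there is $y_0\le x$ with $F(y_0)=F_U(x)$. Now let $z\le F_U(x)$; we must show $G(z)\le (G\circ F)_U(x)$. If $z\le F(y_0)$, the intermediate value theorem applies to $F$ on an interval $[y_1,y_0]$ with $y_1$ very negative, where $F(y_1)\le z\le F(y_0)$. It produces $y\in[y_1,y_0]\subset(-\infty,x]$ with $F(y)=z$. Therefore $G(z)=G(F(y))\le (G\circ F)_U(x)$. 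Taking the supremum over $z\le F_U(x)$ gives $G_U(F_U(x))\le (G\circ F)_U(x)$.

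The main obstacle is this surjectivity step: every level $z\le F_U(x)$ must be hit by $F$ at some point $y\le x$. The argument relies on the degree one property (which makes $F$ unbounded below on $(-\infty,x]$), on continuity, and on the supremum being attained. Attainment holds because $F$ is bounded on $[x-1,x]$ and tends to $-\infty$, so the supremum over $(-\infty,x]$ is a supremum over a compact interval. Once this is in place, the rest is routine monotonicity.
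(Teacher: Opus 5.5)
Your proof is correct, and it takes a somewhat different route from the paper's.

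\textbf{The paper's route.} The paper works on the compact window $[x-1,x]$. It uses the degree one property of $F$, $G$ and $G\circ F$ to replace suprema over half-lines by suprema over unit intervals, and picks a maximiser $x^*$ of $F$ on $[x-1,x]$. By the intermediate value theorem it writes $(G\circ F)_U(x)$ as the supremum of $G$ over $F([x-1,x])$. This image interval has length at least $1$ and top endpoint $F(x^*)=F_U(x)$, and together with the degree one property of $G$ this identifies the supremum with $\sup_{z\le F(x^*)}G(z)$. The lower identity is dismissed as ``similar''.

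\textbf{Your route.} You work on half-lines. Your two inequalities amount to the single set identity $F((-\infty,x])=(-\infty,F_U(x)]$, from which $(G\circ F)_U(x)=\sup_{z\le F_U(x)}G(z)=G_U(F_U(x))$ is immediate.

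\textbf{What each approach buys.} Your argument uses nothing about $G$, neither continuity nor degree one, beyond $G_U$ being defined. Of $F$, the upper identity only needs continuity and $F(y)\to-\infty$ as $y\to-\infty$. Your reflection $\tilde G(x)=-G(-x)$ also turns the lower identity into a formal consequence of the upper one, rather than an unwritten ``similar argument''. The paper's version has the mild advantage that all extrema are over compact intervals, so the extreme value theorem applies directly and no attainment argument on a half-line is needed.

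\textbf{Cosmetic points.} The condition ``if $z\le F(y_0)$'' is vacuous, since $F(y_0)=F_U(x)$. Attainment of $F_U(x)$ is better justified by choosing $R\le x$ with $F<F(x)$ on $(-\infty,R)$, so the supremum is taken over the compact interval $[R,x]$. Boundedness of $F$ on $[x-1,x]$ is not the relevant fact. Alternatively, use the degree one property to reduce to $[x-1,x]$, as the paper does.
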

\begin{proof}
Given $x\in\mathbb{R}$, since $F$ is continuous on the closed interval $[x-1,x]$, it attains its maximum. So there exists a point $x^*\in [x-1, x]$ such that $F(x^*)=\sup\{F(y): y\in [x-1, x]\}$. Hence we have
\begin{align*}
G_U\circ F_U (x)&=\sup\{G(y): F(x^*)-1\leq y \leq F(x^*)\}
=\sup\{G(y): y\leq F(x^*)\}.
\end{align*}
The upper map of the composite function $G\circ F$ satisfies
\begin{align*}
(G \circ F)_U(x) &=\sup \{ G\circ F(y): y \leq x\}\\
 &=\sup\{ G\circ F(y): x-1 \leq y \leq x\}\\
 &=\sup\left\{G(z): \inf_{y\in [x-1, x]} F(y)\leq z \leq \sup_{y\in [x-1, x]} F(y)\right\}\\
 &=\sup\left\{G(z): \inf_{y\in [x-1, x]} F(y)\leq z \leq F(x^*)\right\},
\end{align*}
by the extreme value theorem and the intermediate value theorem. Since $F$ is a degree one map, and $\sup_{y\in [x-1, x]} F(y)-\inf_{y\in [x-1, x]} F(y)\geq 1$, we then have
$(G \circ F)_U(x)=\sup\{G(z): z\leq F(x^*)\}$.
Hence $(G\circ F)_U=G_U \circ F_U$. A similar argument works for lower maps.
\end{proof}

\subsection{Random interpolating maps}

For a given map $G\in\mathrm{End}_1(\mathbb{R})$, the \emph{plateau set} of $G$ is the open set given by 
\begin{align}\label{eq:plateau}
\mathrm{Pl}(G) \coloneq \left\{x\in\mathbb{R}: \exists\, \varepsilon>0,\ G([x-\varepsilon,x+\varepsilon])=\{G(x)\}\,\right\}.
\end{align}
In \cite{Glendinningetal2009}, Glendinning et al.~constructed, for a given map $G\in\mathrm{End}_1(\mathbb{R})$, a family of monotone maps $\{G_a\}_{a\in[0,1]} \subset \mathrm{Mon}_1(\mathbb{R})$ interpolating between $G_0 = G_L$ and $G_1 = G_U$, defined by $G_a(x) \coloneq \inf_{y\geq x} \sup_{z\in [y-a, y]} G(z)$. This deterministic homotopy is pointwise continuous in $a$ and satisfies the \emph{plateau property}: if $G_a(x) \neq G(x)$, then $G_a$ is locally constant around $x$. We now transfer this construction to the random setting.

\begin{definition}\label{def:interp}
For each $F\in \widetilde{\mathcal{E}}(\Omega)$ and $a\in[0,1]$, the \emph{random $a$-interpolating map} $F_a$ is defined by
\begin{equation}\label{eq:interpolatingmap}
F_{\omega, a}(x)\coloneq\inf_{y\geq x} \sup_{z\in [y-a, y]} F_\omega(z),
\end{equation}
for each $\omega\in\Omega$ and $x\in\mathbb{R}$. We refer to the set $\{F_a\}_{a\in[0,1]}$ as the \emph{random interpolating map family} of $F$. 
\end{definition}

Note that  for each $\omega\in\Omega$ and $x\in\mathbb{R}$,
\begin{align*}
F_{\omega,0}(x)& = \inf_{y\geq x} \sup_{z\in[y,y]}F_\omega(z)= \inf_{y\geq x}F_\omega(y) = (F_\omega)_L(x) \\    
F_{\omega,1}(x)&= \inf_{y\geq x} \sup_{z\in [y-1,y]} F_\omega(z)=\sup_{z\in[x-1,x]}F_\omega(z)=\sup_{z\leq x}F_\omega(z)=(F_\omega)_U(x),
\end{align*}
since $F_\omega$ satisfies the degree one property. Thus we call the map $F_L\coloneq F_0$ the \emph{random lower map} $F_L$ and $F_U\coloneq F_1$ the \emph{random upper map}. Given $F\in \widetilde{\mathcal{E}}(\Omega)$, by Lemma \ref{lem:upperlowercomposition} we have that the lower or upper map of the composition $F^{(n)}_\omega$ is equal to the composition of the individual lower or upper maps: that is, for each $\omega\in\Omega$ and $n\geq 1$ we have
\begin{align}\label{eq:loweruppercocycle}
(F^{(n)}_{\omega})_L=(F_{L})_{\omega}^{(n)}
\quad\textrm{and}\quad
(F^{(n)}_{\omega})_U=(F_{U})_{\omega}^{(n)}.
\end{align}
We can therefore drop the parentheses and simply write $F^{(n)}_{\omega, L}$ and $F^{(n)}_{\omega, U}$ respectively for the lower and upper maps of $F^{(n)}_\omega$. For $a\in (0, 1)$, however, the interpolating construction does not in general commute with composition. Hence the notation $F^{(n)}_{\omega, a}$ will always mean the random composition of the one-step interpolating maps along the base orbit:
\begin{align}
F^{(n)}_{\omega,a}\coloneq F_{\sigma^{n-1}\omega,a} \circ \cdots \circ F_{\sigma\omega,a} \circ F_{\omega,a},
\end{align}
for $a\in [0,1]$ and $\omega\in\Omega$.

The random interpolating map family has the following properties, mirroring those demonstrated in \cite{Glendinningetal2009} in the deterministic case.

\begin{proposition}\label{pro:interpolatingmap}
Let $F \in \widetilde{\mathcal{E}}(\Omega)$. The random interpolating map family $\{F_a\}_{a\in [0, 1]}$ satisfies the following properties: 
\begin{enumerate}
\item (Measurability and integrability) For each $a\in [0, 1]$, $F_a \in \widetilde{\mathcal{M}}(\Omega)$.
\item ($L^1$-continuity) The map $a \mapsto F_a$ is continuous with respect to the uniform metric $D(\cdot, \cdot)$ on $\widetilde{\mathcal{M}}(\Omega)$.
\item (Monotonicity in the interpolating parameter) For each $\omega\in\Omega$, the map $a\mapsto F_{\omega, a}$ is non-decreasing.
\item (Fibrewise plateau property) For each $\omega\in \Omega$ and $a\in [0, 1]$, if $F_{\omega, a}(x)\neq F_{\omega}(x)$, then $x \in \mathrm{Pl}(F_{\omega, a})$. 
\end{enumerate}
\end{proposition}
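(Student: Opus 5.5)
We prove the four properties in turn. We first establish the deterministic facts for a single map $G=F_\omega\in\mathrm{End}_1(\mathbb{R})$ with fixed $a\in[0,1]$, and then deal with measurability in $\omega$. Write $M_a(y)\coloneq\sup_{z\in[y-a,y]}G(z)$. This function is continuous, because $G$ is uniformly continuous on compacts. It is degree one, since $M_a(y+1)=M_a(y)+1$. It satisfies $G\le M_a\le G_U$. Then $G_a(x)=\inf_{y\ge x}M_a(y)$ is the lower envelope of $M_a$. It is therefore non-decreasing and degree one. Continuity of $G_a$ follows from continuity of $M_a$ together with periodicity of $M_a(y)-y$, which makes the infimum over $y\ge x$ effectively an infimum over $[x,x+1]$.

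From $G_L\le G_a\le G_U$ we get $\|G_a-\mathrm{Id}\|\le\|G-\mathrm{Id}\|+1$, which gives integrability. For measurability, continuity in $z$ and $y$ lets us replace the sup and inf by sup and inf over rationals. Then $\omega\mapsto F_{\omega,a}(x)$ is measurable for each $x$, and since each $F_{\omega,a}$ is continuous, $\omega\mapsto F_{\omega,a}$ is measurable into $\mathrm{Mon}_1(\mathbb{R})$. This proves (1).

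Monotonicity (3) is immediate: $[y-a,y]\subset[y-b,y]$ for $a\le b$, so $M_a\le M_b$ pointwise, and taking $\inf_{y\ge x}$ preserves the inequality.

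For the plateau property (4), suppose $G_a(x)\ne G(x)$. We use $G_a(x)\le M_a(x)$, obtained by taking $y=x$, and $G_a(x)\ge\inf_{y\ge x}G(y)$. We split into two cases.
\begin{itemize}
\item \emph{Case $G_a(x)>G(x)$.} Continuity gives $\delta>0$ with $G(t)<G_a(x)$ for $|t-x|<\delta$. For $x'$ in this neighbourhood, we show $G_a(x')=G_a(x)$. Monotonicity gives one inequality on each side of $x$. For the reverse, the sup over $[y-a,y]$ for $y$ near $x$ is either attained away from the neighbourhood or is below $G_a(x)$. This is the same argument as in the deterministic case of \cite{Glendinningetal2009}.
\item \emph{Case $G_a(x)<G(x)$.} Then $G_a(x)<M_a(x)$, so the infimum is realised at some $y_0>x$ with $M_a(y_0)=G_a(x)$. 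Hence $G_a$ is constant on $[x-\delta,y_0]$ for small $\delta$, because $M_a>G_a(x)$ near $x$ and the infimum over $y\ge x'$ still equals $M_a(y_0)$.
\end{itemize}
I expect the first case to be the fiddly step. There I would argue with the first point $y\ge x$ at which $M_a$ dips to the infimum level, and use continuity of $M_a$ to show the infimum is unchanged for nearby $x'$.

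For $L^1$-continuity (2), interpreting $D(F,G)=\int\|F_\omega-G_\omega\|\,d\mathbb{P}$, we first bound the fibrewise modulus. For $a<b$ we have $0\le M_b(y)-M_a(y)\le\sup_{|s-t|\le b-a}|G(s)-G(t)|\eqqcolon\mathrm{osc}_G(b-a)$. Since the inf preserves uniform closeness, $\|G_b-G_a\|\le\mathrm{osc}_G(b-a)$. By periodicity of $G-\mathrm{Id}$, $\mathrm{osc}_G$ is computed on $[0,2]$, so it tends to $0$ as $b-a\to0$ by uniform continuity.

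We also have the domination $\|F_{\omega,b}-F_{\omega,a}\|\le\|F_{\omega,U}-F_{\omega,L}\|\le2\|F_\omega-\mathrm{Id}\|+1$, which is integrable. Dominated convergence then gives $D(F_a,F_b)\to0$ as $b\to a$. Measurability of $\omega\mapsto\|F_{\omega,b}-F_{\omega,a}\|$ follows by taking the sup over rational $x$.
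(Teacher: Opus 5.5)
Your proposal is correct and follows essentially the paper's route: rational reduction and the sandwich $F_{\omega,L}\le F_{\omega,a}\le F_{\omega,U}$ for (1), nested windows for (3), fibrewise uniform convergence plus dominated convergence for (2), and the Glendinning--J\"ager--Stark plateau argument for (4), which the paper only cites while you sketch it (in the subcase $M_a(x)=G_a(x)>G(x)$, local constancy comes from your window observation, not from continuity of $M_a$ alone). Your modulus $\|G_b-G_a\|\le\mathrm{osc}_G(b-a)$ is the estimate that actually makes (2) go through, since the paper's displayed bound, a supremum of $|F_\omega(z)-F_\omega(z')|$ over $z\in[y-a_n,y]$ and $z'\in[y-a,y]$, does not tend to $0$ when $a>0$.
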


\begin{proof}
For item 1, to establish that $F_a \in \widetilde{\mathcal{M}}(\Omega)$ we must verify that $F_{\omega, a}$ is both measurable with respect to $\omega$ and integrable. A priori, the definition of $F_{\omega,a}(x)$ involves extrema over continuous intervals, which poses potential measurability issues. However, since the fibre map $F_\omega$ is continuous, the extrema over any interval coincide with the extrema over a countable dense subset. Furthermore, since $F_\omega$ has degree one, the outer infimum may be naturally restricted to $y \in [x, x+1]$. We can thus explicitly rewrite the interpolation as
\begin{align}
F_{\omega,a}(x) = \inf_{y \in [x,x+1] \cap \mathbb{Q}} \sup_{z \in [y-a,y] \cap \mathbb{Q}} F_\omega(z). 
\end{align}
Because pointwise supremum and infimum over countable index sets preserve measurability, the dependence $\omega \mapsto F_{\omega,a}(x)$ is measurable. 

For integrability, observe that by construction, the interpolating map is bounded by the lower and upper monotone envelopes: for each $\omega\in\Omega$ and $a\in [0,1]$, we have $F_{\omega, L} \leq F_{\omega, a}\leq F_{\omega, U}$. Consequently, its maximal displacement is uniformly bounded by the maximal displacement of the fibre map: that is, $\| \Delta F_{\omega, a} \| \le \| \Delta F_\omega \|$. Since $F \in \widetilde{\mathcal{E}}(\Omega)$, the norm $\| \Delta F_\omega \|$ belongs to $L^1(\Omega)$, which immediately guarantees that $\int_\Omega \| \Delta F_{\omega, a} \|\,\mathrm{d}\mathbb{P} < \infty$. Thus, $F_a \in \widetilde{\mathcal{M}}(\Omega)$.

For item 2, fix $a\in[0,1]$ and let $a_n\to a$. For each fixed $\omega\in\Omega$, the fibre map $F_\omega$ is continuous and has degree one. Hence, on every interval of the form $[y-1,y]$, it is uniformly continuous. Therefore, for each fixed $\omega$, we have $\|F_{\omega,a_n}-F_{\omega,a}\|\to 0$ as $n\to\infty$.
The interpolating maps $F_{\omega,a}(x)$ satisfy
\begin{align*}
F_{\omega,a}(x) = \inf_{y\in[x,x+1]}\sup_{z\in[y-a,y]}F_\omega(z),
\end{align*}
and similarly for $F_{\omega,a_n}$. Hence
\begin{align*}
\|F_{\omega,a_n}-F_{\omega,a}\| \le \sup_{x\in\mathbb R}\sup_{y\in[x,x+1]} \sup\bigl\{ |F_\omega(z)-F_\omega(z')|: z\in[y-a_n,y],\ z'\in[y-a,y] \bigr\},
\end{align*}
which tends to $0$ as $n\to\infty$.

Moreover, by the triangle inequality and the bound $\|\Delta F_{\omega,a}\| \le \|\Delta F_\omega\|$, we have
\begin{align*}
\|F_{\omega,a_n}-F_{\omega,a}\| \le \|F_{\omega,a_n}-\mathrm{Id}\| + \|F_{\omega,a}-\mathrm{Id}\| \le 2\|\Delta F_\omega\|.
\end{align*}
Since $\|\Delta F\|\in L^1(\Omega)$, the dominated convergence theorem yields
\begin{align*}
D(F_{a_n},F_a) = \int_\Omega \|F_{\omega,a_n}-F_{\omega,a}\|\,\mathrm{d}\mathbb P(\omega) \to 0.
\end{align*}
Thus $a\mapsto F_a$ is continuous in $\widetilde{\mathcal{M}}(\Omega)$.

For item 3, fix $\omega\in \Omega$ and $x\in \mathbb{R}$. If $0\leq a\leq b\leq 1$, then for every $y\in [x, x+1]$, we have $[y-a, y]\subset [y-b, y]$. Hence 
\begin{align}
\sup_{z\in [y-a, y]} F_\omega(z)\leq \sup_{z\in [y-b, y]} F_\omega(z).
\end{align}
Taking the infimum over $y\in [x, x+1]$, we obtain $F_{\omega, a}(x)\leq F_{\omega, b}(x)$.

Finally, for item 4, we fix $\omega\in \Omega$ and $a\in [0,1]$ and consider the map $F_{\omega, a}\in\mathrm{Mon}_1(\mathbb{R})$. Following the deterministic argument of \cite{Glendinningetal2009} applied to the single map $F_\omega\in\mathrm{end}_1(\mathbb{R})$, we have $x\in \mathrm{Pl}(F_{\omega,a})$ if $F_{\omega,a}(x)\neq F_\omega(x)$.
\end{proof}

\subsection{Coincidence sets for random interpolating maps}
To establish the compactness of the random rotation set, we analyse the orbits of random interpolating maps $F_{\omega, a}$ that coincide with orbits of the original map $F_\omega$. For this, we introduce the following sets.

\begin{definition}\label{def:coincidencesets}
Let $F \in \widetilde{\mathcal{E}}(\Omega)$ and fix $a \in [0, 1]$. For each $\omega \in \Omega$ and $n\in\mathbb{N}_0$, we define the \emph{$n$-step coincidence set} $S^n_{\omega,a}$ iteratively as follows:
\begin{align*}
S^0_{\omega,a} &\coloneq \{x \in \mathbb{R} : F^{(0)}_{\omega,a}(x) = F^{(0)}_\omega (x)\} = \mathbb{R}, \\
S^1_{\omega,a} &\coloneq \{x \in \mathbb{R} : F_{\omega,a}(x) = F_\omega(x)\}, \\
    &\ \ \vdots \\
S^n_{\omega,a} &\coloneq \left\{x \in \mathbb{R} : F^{(i)}_{\omega,a}(x) = F^{(i)}_\omega(x), \ \forall i = 1, \dots, n\right\}.
\end{align*}
Furthermore, the \emph{global coincidence set} is defined as $S_{\omega,a} \coloneq \bigcap_{n=1}^\infty S^n_{\omega,a}$.
\end{definition}

Coincidence sets have the following properties.

\begin{proposition}\label{pro:coincidence}
For each $n \in \mathbb{N}_0$, $l\in\mathbb{Z}$, $\omega \in \Omega$, and $a \in [0, 1]$, the $n$-step coincidence set $S^n_{\omega,a}$ satisfies:
\begin{enumerate}
\item (Nestedness) $S^{n+1}_{\omega,a} \subset S^n_{\omega,a}$;
\item (Recursive structure) $S^{n+1}_{\omega,a} = S^n_{\omega,a} \cap \left(F^{(n)}_{\omega,a}\right)^{-1}\left(S^1_{\sigma^n\omega,a}\right)$;
\item (Full coverage) $F^{(n)}_{\omega,a}(S^n_{\omega,a})=\mathbb{R}$ and so $S^n_{\omega,a}\neq \emptyset$;
\item (Integer translational invariance) $S^n_{\omega,a}+l=S^n_{\omega,a}$;
\item (Closedness) $S^n_{\omega,a}$ is closed;
\item (Global coincidence) $S_{\omega,a}$ is a non-empty closed set invariant under integer translation.  
\end{enumerate}
\end{proposition}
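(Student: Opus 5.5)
We prove the items in the order (1), (2), (4), (5), (3), (6), since full coverage is the only item that uses the plateau property in an essential way.

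\textbf{Items (1) and (2).} Nestedness is immediate from the definition, because $S^{n+1}_{\omega,a}$ imposes the conditions for $i=1,\dots,n+1$. For the recursive structure, take $x\in S^n_{\omega,a}$ and put $y=F^{(n)}_{\omega,a}(x)=F^{(n)}_\omega(x)$. Then
\begin{align*}
F^{(n+1)}_{\omega,a}(x)=F_{\sigma^n\omega,a}(y)\quad\textrm{and}\quad F^{(n+1)}_\omega(x)=F_{\sigma^n\omega}(y),
\end{align*}
so the coincidence at step $n+1$ holds if and only if $y\in S^1_{\sigma^n\omega,a}$, that is, if and only if $x\in (F^{(n)}_{\omega,a})^{-1}(S^1_{\sigma^n\omega,a})$.

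\textbf{Items (4) and (5).} Integer translation invariance follows from the degree one property of $F_\omega$ and of $F_{\omega,a}$. For $F_{\omega,a}$ this is read off from the formula \eqref{eq:interpolatingmap}. Hence both compositions commute with $x\mapsto x+l$, and the defining equalities are preserved. Closedness holds because each $S^n_{\omega,a}$ is a finite intersection of zero sets of the continuous functions $F^{(i)}_{\omega,a}-F^{(i)}_\omega$. Continuity of $F_{\omega,a}$ holds because it lies in $\mathrm{Mon}_1(\mathbb{R})$ by Proposition~\ref{pro:interpolatingmap}.

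\textbf{Item (3): full coverage.} This is the main step. We argue by induction on $n$, starting from the trivial case $n=0$, and the base-case lemma carries the difficulty.

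\emph{Base-case lemma.} For any $\omega$ and any $y\in\mathbb{R}$, there is $x\in S^1_{\omega,a}$ with $F_{\omega,a}(x)=y$. To prove it, note that $F_{\omega,a}$ is continuous, non-decreasing and of degree one, hence surjective. So $P=F_{\omega,a}^{-1}(y)$ is a non-empty compact interval $[p,q]$. It suffices to find a point of $P$ outside $\mathrm{Pl}(F_{\omega,a})$, because item~4 of Proposition~\ref{pro:interpolatingmap} then forces $F_{\omega,a}(x)=F_\omega(x)$ at that point. The endpoint $q$ is such a point. For every $\varepsilon>0$ we have $F_{\omega,a}(q+\varepsilon)>y$ by maximality of $q$, so $F_{\omega,a}$ is not constant on any neighbourhood of $q$. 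Therefore $q\notin\mathrm{Pl}(F_{\omega,a})$, and $q\in S^1_{\omega,a}$.

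\emph{Inductive step.} Suppose $F^{(n)}_{\omega,a}(S^n_{\omega,a})=\mathbb{R}$ and fix $z\in\mathbb{R}$. By the base-case lemma applied at $\sigma^n\omega$, pick $y\in S^1_{\sigma^n\omega,a}$ with $F_{\sigma^n\omega,a}(y)=z$. By the induction hypothesis, pick $x\in S^n_{\omega,a}$ with $F^{(n)}_{\omega,a}(x)=y$. Item (2) then gives $x\in S^{n+1}_{\omega,a}$, and $F^{(n+1)}_{\omega,a}(x)=z$. In particular $S^n_{\omega,a}\neq\emptyset$.

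\textbf{Item (6).} The sets $K_n=S^n_{\omega,a}\cap[0,1]$ are nested by item (1) and closed by item (5), hence compact. Each $K_n$ is non-empty: item (3) gives $S^n_{\omega,a}\neq\emptyset$, and item (4) lets us translate any of its points into $[0,1]$. By Cantor's intersection theorem, $\bigcap_n K_n\neq\emptyset$, so $S_{\omega,a}\neq\emptyset$. The global set $S_{\omega,a}$ is closed as an intersection of closed sets, and it is integer translation invariant because each $S^n_{\omega,a}$ is.
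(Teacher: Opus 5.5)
Your proof is correct and follows the paper's argument essentially step by step. Nestedness and the recursive structure come straight from the definition, translation invariance and closedness from the degree one property and continuity of $F^{(i)}_{\omega,a}-F^{(i)}_\omega$, full coverage by induction through the recursive structure, and the global set by Cantor's intersection theorem. The one cosmetic difference is in the base case of full coverage: you take the right endpoint of the fibre $F_{\omega,a}^{-1}(y)$, whereas the paper takes the left endpoint of the plateau component containing a preimage. Both are non-plateau points, so the plateau property applies in the same way, and your version avoids the paper's slip of writing $F_{\omega,a}(z)=F_\omega(y)$ instead of $F_{\omega,a}(z)=F_{\omega,a}(y)$.
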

\begin{proof}
For item 1, note that $S^{n+1}_{\omega, a}$ satisfies
\begin{align*}
S^{n+1}_{\omega, a}=S^n_{\omega, a} \cap \left\{x \in \mathbb{R}: F^{(n+1)}_{\omega, a}(x)=F^{(n+1)}_\omega(x)\right\}.
\end{align*}
So $S^{n+1}_{\omega, a}$ is a subset of $S^n_{\omega, a}$.

We prove item 2 by induction. For the initial step, note that $S^0_{\omega, a}\cap (F^{(0)}_{\omega, a})^{-1}(S^1_{\omega, a})=S^0_{\omega, a}\cap S^1_{\omega, a}=S^1_{\omega, a}$ since $S^1_{\omega, a}\subset S^0_{\omega, a}$ by item 1. For the induction step, we assume that $S^k_{\omega, a}=S^{k-1}_{\omega, a}\cap (F_{\omega, a}^{(k-1)})^{-1}(S^1_{\sigma^{k-1}\omega, a})$ is true and consider $S^{k+1}_{\omega, a}$. We have
\begin{align*}
S^{k+1}_{\omega, a}=S^{k}_{\omega, a} \cap \{x \in \mathbb{R}: F^{(k+1)}_{\omega,a}(x)=F^{(k+1)}_\omega(x)\}.
\end{align*}
Suppose $x \in S^k_{\omega, a} \cap (F^{(k)}_{\omega, a})^{-1}(S^1_{\sigma^k\omega, a})$. Since $x\in S^k_{\omega, a}$, we have $F^{(i)}_{\omega, a}(x)=F^{(i)}_{\omega}(x)$ for each $i\in\{0, \ldots, k\}$. As $x\in  (F^{(k)}_{\omega, a})^{-1}(S^1_{\sigma^k\omega, a})$, we know that $F^{(k)}_{\omega, a}(x)\in S^1_{\sigma^k\omega, a}$. Let $y$ denote the point $F^{(k)}_{\omega, a}(x)=F^{(k)}_{\omega}(x)$. Since $y\in S^1_{\sigma^k\omega, a}$, we have
\begin{align*}
F^{(k+1)}_{\omega, a}(x)=F_{\sigma^k\omega, a}(F^{(k)}_{\omega, a}(x))=F_{\sigma^k\omega,a}(y)=F_{\sigma^k\omega}(y)=F_{\sigma^k\omega}(F^{(k)}_{\omega}(x))=F^{(k+1)}_{\omega}(x).
\end{align*}
Hence $F^{(k+1)}_{\omega, a}(x)=F^{k+1}_\omega(x)$ and so $x \in S^{k+1}_{\omega, a}$.

Conversely, suppose that $x\in S^{k+1}_{\omega, a}$. We need to show $x\in S^k_{\omega, a}\cap (F^{(k)}_{\omega, a})^{-1}(S^1_{\sigma^k\omega, a})$. We have
\begin{align*}
x\in S^{k+1}_{\omega, a}=\left\{x\in \mathbb{R}: F^{(i)}_{\omega, a}(x)=F^{(i)}_{\omega}(x), \ \forall i=1,\ldots, k+1\right\}.
\end{align*}
Since $F^{(i)}_{\omega, a}(x)=F^{(i)}_{\omega}(x)$ for all $i=1, \ldots, k$, we have $x \in S^k_{\omega, a}$. Let $y$ denote the point $F^{(k)}_{\omega, a}(x)=F^{(k)}_{\omega}(x)$. Since $x \in S^{k+1}_{\omega, a}$, we have
$F_{\sigma^k \omega, a}(y)=F_{\sigma^k\omega}(y)$. Hence $y \in S^1_{\sigma^k\omega, a}$. That is, $F^{(k)}_{\omega, a}(x)=F^{(k)}_{\omega}(x)\in S^1_{\sigma^k\omega, a}$. This implies $x \in S^k_{\omega, a} \cap (F^{(k)}_{\omega, a})^{-1}(S^1_{\sigma^k\omega, a})$, which completes item 2.

For item 3, we show that $F^{(n)}_{\omega,a}(S^n_{\omega,a})=\mathbb{R}$ by induction on $n\geq 1$. 
For the initial step, take $c\in\mathbb{R}$. We have $\lim_{x\to\pm\infty}F_{\omega,a}(x)=\pm\infty$, since $F_{\omega,a}$ has the degree one property. So there exist $k_1,k_2\in\mathbb{Z}$, with $k_1<k_2$, such that $F_{\omega,a}(k_1)<c<F_{\omega,a}(k_2)$. Since $F_{\omega,a}$ is continuous, by the intermediate value theorem there exists $y\in [k_1,k_2]$ such that $F_{\omega,a}(y)=c$. If $y\in S^1_{\omega,a}$, then $c\in F_{\omega, a}(S^1_{\omega, a})$ and we are done, so assume $y\notin S^1_{\omega,a}$. Thus $F_{\omega,a}(y)\neq F_\omega(y)$, so $y\in\mathrm{Pl}(F_{\omega,a})$ by Proposition~\ref{pro:interpolatingmap}(4). Take $z$ to be the left endpoint of the plateau of $F_{\omega,a}$ that contains $y$. By \eqref{eq:plateau},  the plateau set $\mathrm{Pl}(F_{\omega,a})$ is open, so $z\in S^1_{\omega,a}$, and thus $F_{\omega,a}(z)=F_{\omega}(y)$. So we have $c=F_{\omega,a}(z)\in F_{\omega, a}(S^1_{\omega, a})$. Since $c\in\mathbb{R}$ was arbitrary, we have $F_{\omega,a}(S^1_{\omega,a})=\mathbb{R}$.
For the induction step, assume that $F^{(i)}_{\omega,a}(S^i_{\omega,a})=\mathbb{R}$ for all $i=1,\ldots,n$. Then by item 2,
\begin{align*}
F^{(n+1)}_{\omega,a}(S^{n+1}_{\omega,a}) & = F_{\sigma^n\omega}(F^{(n)}_{\omega,a}(S^n_{\omega,a}\cap (F^{(n)}_{\omega,a})^{-1}(S^1_{\sigma^n\omega,a}))) \\
&= F_{\sigma^n\omega}(F^{(n)}_{\omega,a}(S^n_{\omega,a})\cap S^1_{\sigma^n\omega,a}) \\
&= F_{\sigma^n\omega}(S^1_{\sigma^n\omega,a}) \\
&= \mathbb{R}
\end{align*}
as required, using the induction hypothesis with $i=n$ and then with $i=1$. Moreover, $S^0_{\omega,a}=\mathbb{R}$ and $F^{(0)}_{\omega,a}=\mathrm{Id}$, so the result is trivially true when $n=0$. Thus $S^n_{\omega,a}$ is non-empty for all $n\in\mathbb{N}_0$.

For item 4, we have $S^0_{\omega, a}+l=\mathbb{R}=S^0_{\omega, a}$ for each $l\in\mathbb{Z}$. For each $a \in [0, 1]$,  $\omega \in \Omega$ and $n \in \mathbb{N}$, we define a function $G_{n, \omega, a}:\mathbb{R}\rightarrow \mathbb{R}$ by
$G_{n, \omega, a} \coloneq F^{(n)}_{\omega, a}-F^{(n)}_{\omega}$. So $G_{n, \omega, a}$ is continuous, and for each $l\in \mathbb{Z}$,
\begin{align*}
G_{n, \omega, a}(x+l)&=F^{(n)}_{\omega, a}(x+l)-F^{(n)}_\omega(x+l)\\
    &=F^{(n)}_{\omega, a}(x)+l-(F^{(n)}_\omega(x)+l)\\
    &=G_{n, \omega, a}(x).
\end{align*}
Hence $G_{n, \omega, a}$ is a $1$-periodic function, and so $G^{-1}_{n, \omega, a}(\{0\})+l=G^{-1}_{n, \omega, a}(\{0\})$ for each $l\in\mathbb{Z}$. For each $n\in \mathbb{N}$, the set $S^n_{\omega, a}$ can be expressed as an intersection of integer-translation invariant sets:
\begin{align}\label{eq:intersectG}
S^n_{\omega, a}=\bigcap_{i=1}^n G^{-1}_{i, \omega, a}(\{0\}).
\end{align}
Hence $S^n_{\omega, a}+l=S^n_{\omega, a}$ for each $l\in\mathbb{Z}$.

For item 5, we have $S^0_{\omega, a}=\mathbb{R}$, which is closed. Since $G_{i, \omega, a}$ is continuous for each $i\in \mathbb{N}$, the pre-image sets $G^{-1}_{i, \omega, a}(\{0\})$ are closed. Thus $S^n_{\omega, a}$ is an intersection of closed sets by \eqref{eq:intersectG}, and so is also closed.

For item 6, it follows from the previous items that for each $l\in\mathbb{Z}$ the sets $S^n_{\omega, a}\cap [l, l+1]$ are non-empty compact and nested sets. Therefore, by the Cantor intersection theorem, their intersection $S_{\omega, a}\cap [l, l+1]$ is non-empty. 
Since the sets $S^n_{\omega, a}$ are integer translation invariant for each $n\in\mathbb{N}$, their intersection $S_{\omega, a}$ is also integer translation invariant.
\end{proof}

As the global coincidence set $S_{a,\omega}$ is non-empty for each $a\in [0, 1]$, there exists at least one point for which the orbit with respect to $F$ coincides with the orbit with respect to $F_a$.

\subsection{Random rotation sets}
We now prove the interval structure of the random rotation set. In the deterministic and uniquely ergodically-forced settings, related interval descriptions are obtained by global topological arguments such as \cite{Glendinningetal2009}. In the present measurable random setting, however, we cannot rely on compactness and continuity of the base system, continuity of the fibre map dependence or on the existence of invariant minimal sets. Instead, we work directly with the random interpolating family and over an ergodic measure-preserving dynamical system.

We begin this section by proving that there is a common full measure set on which all maps in the interpolating family achieve their mean random rotation numbers.

\begin{lemma}(Simultaneous convergence for the interpolating family)\label{lem:simconvergence}
Let $(\Omega, \mathcal{F}, \mathbb{P}, \sigma)$ be an ergodic probability measure-preserving dynamical system. Let $F\in \widetilde{\mathcal{E}}(\Omega)$ be a lift of a random circle endomorphism over $(\Omega, \mathcal{F}, \mathbb{P}, \sigma)$ and let  $\{F_a\}_{a\in[0,1]}$ be the random interpolating family of $F$. The mapping $a\mapsto \rho(F_a)$ is continuous and non-decreasing.
Moreover, there exists a $\sigma$-invariant full measure set $\Omega^*\subset \Omega$ such that for every $\omega\in \Omega^*$, every $a\in [0, 1]$ and every $x\in \mathbb{R}$,
\begin{align*}
\lim_{n\to \infty}\frac{F^{(n)}_{\omega, a}(x)-x}{n}=\rho(F_a).
\end{align*}
\end{lemma}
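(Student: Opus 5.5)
Continuity and monotonicity of $a\mapsto\rho(F_a)$ come first. By Proposition~\ref{pro:interpolatingmap}(2), $a\mapsto F_a$ is continuous into $\widetilde{\mathcal{M}}(\Omega)$ for the metric $D$. The Li--Lu continuity of the random rotation number on $\widetilde{\mathcal{M}}(\Omega)$ then makes $a\mapsto\rho_{F_a}$ continuous in $L^1$. Under ergodicity $\rho_{F_a}$ is a.s.\ constant, so $a\mapsto\rho(F_a)=\int\rho_{F_a}\,\mathrm{d}\mathbb{P}$ is continuous. Monotonicity follows from Proposition~\ref{pro:interpolatingmap}(3) and the fact that composing non-decreasing maps preserves order. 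If $a\le b$, then $F_{\omega,a}\le F_{\omega,b}$ fibrewise, and induction on $n$ gives
\[
F^{(n)}_{\omega,a}(x)=F_{\sigma^{n-1}\omega,a}(F^{(n-1)}_{\omega,a}(x))\le F_{\sigma^{n-1}\omega,a}(F^{(n-1)}_{\omega,b}(x))\le F^{(n)}_{\omega,b}(x).
\]
Dividing by $n$ and applying \eqref{eq:LiLu} on a full-measure set gives $\rho(F_a)\le\rho(F_b)$.

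For simultaneous convergence, I first remove the dependence on $x$ for each fixed $a$. For a monotone degree-one lift $G$, $|G^{(n)}_\omega(x)-x-(G^{(n)}_\omega(0)-0)|\le 1$ for $x\in[0,1]$, by monotonicity and \eqref{eq:degree1}, and by integer periodicity this holds for all $x$. So if the limit exists at $x=0$, it exists for every $x$ with the same value. For each rational $a\in[0,1]\cap\mathbb{Q}$, \eqref{eq:LiLu} and ergodicity give a full-measure set $\Omega_a$ on which the average at $x=0$ converges to $\rho(F_a)$. Put $\Omega'=\bigcap_{a\in\mathbb{Q}\cap[0,1]}\Omega_a$, which is a countable intersection and so has full measure.

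Extending to all $a$ is the main step, and I would do it by a sandwich argument. Fix $\omega\in\Omega'$ and $a\in[0,1]$, and take rationals $q_1\le a\le q_2$; if $a=0$ or $a=1$ it is itself rational. The comparison above gives
\[
\frac{F^{(n)}_{\omega,q_1}(x)-x}{n}\le\frac{F^{(n)}_{\omega,a}(x)-x}{n}\le\frac{F^{(n)}_{\omega,q_2}(x)-x}{n},
\]
so the liminf and limsup of the middle average lie in $[\rho(F_{q_1}),\rho(F_{q_2})]$. Letting $q_1\uparrow a$ and $q_2\downarrow a$ and using continuity of $a\mapsto\rho(F_a)$ yields the limit $\rho(F_a)$. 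This is the point where continuity and monotonicity are both used, and where non-uniformity in $\omega$ would otherwise be an obstacle.

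Finally, to make the set $\sigma$-invariant, set $\Omega^*=\bigcap_{k\ge0}\sigma^{-k}\Omega'$, which still has full measure because $\sigma$ preserves $\mathbb{P}$ and satisfies $\sigma^{-1}\Omega^*\supset\Omega^*$. If strict invariance is required, I would argue instead that $\Omega'$ is already forward invariant. For $\omega\in\Omega'$, $F^{(n)}_{\sigma\omega,a}(F_{\omega,a}(x))=F^{(n+1)}_{\omega,a}(x)$, and since $F_{\omega,a}(x)-x$ is finite the averages along $\sigma\omega$ from the point $F_{\omega,a}(x)$ have the same limit. By the $x$-independence above this gives convergence at every point for $\sigma\omega$, so $\sigma\omega\in\Omega'$. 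The same shift identity, read backwards, shows that $\sigma\omega\in\Omega'$ implies $\omega\in\Omega'$. Hence $\Omega'=\sigma^{-1}\Omega'$, and we may take $\Omega^*=\Omega'$ directly.
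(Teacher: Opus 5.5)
Your proposal is correct and follows the same route as the paper. You get continuity of $a\mapsto\rho(F_a)$ from Proposition~\ref{pro:interpolatingmap}(2) together with the Li--Lu $L^1$-continuity, and monotonicity from order preservation under composition. You then intersect over rational parameters and sandwich between rationals, closing off with continuity.

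The differences are minor:
\begin{itemize}
\item You check explicitly that the limit does not depend on $x$, which the paper attributes to Li and Lu.
\item You check the exact $\sigma$-invariance of $\Omega^*$ by taking $\Omega_a$ to be the whole set where the average at $0$ converges, which the paper also attributes to Li and Lu.
\item You prove monotonicity by comparing averages pointwise rather than through their integrals, which avoids an unstated limit interchange in the paper.
\end{itemize}
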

\begin{proof}

We first note that the dependence $a\mapsto F_a$ of the interpolating maps on the parameter is a continuous map from $[0, 1] \to \widetilde{\mathcal{M}}(\Omega)$ by Proposition~\ref{pro:interpolatingmap}(2). As proven by Li and Lu, for lifts of random monotone circle maps, the random rotation number $\rho$ is a continuous map $\widetilde{\mathcal{M}}(\Omega)\to L^1(\Omega)$. 
Hence
\begin{align}
a\mapsto \int_\Omega \rho_{F_a}\,\mathrm{d}\mathbb{P}=\rho(F_a), 
\end{align}
the dependence of the mean random rotation number of $F_a$ on the parameter $a$, is a continuous map from the closed interval $[0,1]$ to $\mathbb{R}$.
By Proposition~\ref{pro:interpolatingmap}(3), $a\mapsto F_{\omega,a}$ is non-decreasing for each $\omega\in\Omega$. Since monotonicity is preserved by composition, $a\mapsto F^{(n)}_{\omega,a}$ is non-decreasing for each $\omega\in\Omega$ and $n\in\mathbb{N}$. So given $a,a'\in [0,1]$ with $a<a'$, we have 
\begin{align*}
\int_\Omega \frac{F^{(n)}_{\omega,a}(0)}{n}\,\mathrm{d}\mathbb{P}(\omega) \leq \int_\Omega \frac{F^{(n)}_{\omega,a'}(0)}{n}\,\mathrm{d}\mathbb{P}(\omega)
\end{align*}
for each $n\in\mathbb{N}$. By \eqref{eq:LiLu}, we have almost sure convergence of the ratios within each integral as $n\to\infty$, yielding $\rho(F_a)\leq \rho(F_{a'})$. Hence $a\mapsto \rho(F_a)$ is non-decreasing.

Let $\mathcal Q=\mathbb Q\cap [0, 1]$. For each $q\in \mathcal Q$, we have $F_q\in\widetilde{\mathcal{M}}(\Omega)$ by Proposition~\ref{pro:interpolatingmap}(1), and so by \eqref{eq:LiLu} there exists a $\sigma$-invariant full-measure set $\Omega_q$ such that, for every $\omega\in \Omega_q$ and every $x\in \mathbb{R}$,
\begin{align*}
\lim_{n\to\infty}
\frac{F_{\omega,q}^{(n)}(x)-x}{n}=\rho(F_q).
\end{align*}
Let $\Omega^*=\bigcap_{q\in\mathcal Q}\Omega_q$. Then $\Omega^*$ is $\sigma$-invariant and has full measure.

Fix $a\in [0,1]$. If $a\in \mathcal{Q}$, then the conclusion is immediate. Otherwise, choose $p, q\in \mathcal{Q}$ with $p<a<q$. By Proposition~\ref{pro:interpolatingmap}(3), the interpolating family is monotonically increasing in the parameter, so for every $\omega\in \Omega$, we have $F_{\omega,p}\leq F_{\omega,a}\leq F_{\omega,q}$. Since monotonicity of the interpolating maps is preserved by composition, we have $F_{\omega, p}^{(n)} \leq F_{\omega, a}^{(n)} \leq F_{\omega, q}^{(n)}$ for all $n\in\mathbb{N}$.
Hence, for each $\omega\in\Omega^*$ and $x\in\mathbb{R}$,
$$
\rho(F_p)
\leq
\liminf_{n\to\infty}
\frac{F_{\omega,a}^{(n)}(x)-x}{n}
\leq
\limsup_{n\to\infty}
\frac{F_{\omega,a}^{(n)}(x)-x}{n}
\leq
\rho(F_q).
$$
Letting $p\uparrow a$ and $q\downarrow a$ through rational parameters, we obtain the desired limit since $a\mapsto \rho(F_a)$ is continuous. The endpoint cases $a=0,1$ follow directly since $0,1\in\mathcal Q$.
\end{proof}

The following proposition constitutes the proof of part 2 of Theorem \ref{thm:rotset}, the existence of a full measure set on which all values in the random rotation set are achieved as asymptotic average displacements along actual orbits of an integrable lift of a random circle endomorphism.

\begin{proposition}(Simultaneous realisation of random rotation values)\label{pro:simrealisation}
For every $\omega\in\Omega^*$ and every $c\in[\rho(F_L),\rho(F_U)]$, there exists $x_c(\omega)\in[0,1]$ satisfying
$$
\lim_{n\to\infty} \frac{F_\omega^{(n)}(x_c(\omega))-x_c(\omega)}{n}=c.
$$
\end{proposition}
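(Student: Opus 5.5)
Fix $\omega\in\Omega^*$ and $c\in[\rho(F_L),\rho(F_U)]$. The plan is to find an interpolating parameter with rotation number $c$, and then use the coincidence set to transfer the convergence from $F_a$ to $F$ itself.

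\emph{Choice of parameter.} By Lemma~\ref{lem:simconvergence}, the map $a\mapsto\rho(F_a)$ is continuous on $[0,1]$. Its endpoint values are $\rho(F_0)=\rho(F_L)$ and $\rho(F_1)=\rho(F_U)$. The intermediate value theorem therefore gives $a=a(c)\in[0,1]$ with $\rho(F_a)=c$. This choice of $a$ depends only on $c$, not on $\omega$. This matters: the full-measure set $\Omega^*$ of Lemma~\ref{lem:simconvergence} was built to work simultaneously for all $a$, so no new null set appears when $c$ ranges over an uncountable set.

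\emph{Choice of point.} By Proposition~\ref{pro:coincidence}(6), the global coincidence set $S_{\omega,a}$ is non-empty, closed and invariant under integer translation. Hence it meets $[0,1]$, and I take $x_c(\omega)\in S_{\omega,a}\cap[0,1]$. By definition of $S_{\omega,a}=\bigcap_n S^n_{\omega,a}$, we have
\[
F^{(n)}_\omega(x_c)=F^{(n)}_{\omega,a}(x_c)\quad\text{for every } n\ge 1.
\]
Therefore the average displacement $A_n(\omega,x_c)$ of $F$ equals the average displacement of $F_a$ at $x_c$. Since $\omega\in\Omega^*$, Lemma~\ref{lem:simconvergence} applied with this $a$ and $x=x_c$ shows that the latter converges to $\rho(F_a)=c$, which is the claim.

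\emph{Main point to check.} The essential input is that Lemma~\ref{lem:simconvergence} holds for every $x$ and every $a$ on the single set $\Omega^*$, and that holds as stated. Beyond that, I only need to note that $S_{\omega,a}$ is defined fibrewise for every $\omega$, so its non-emptiness needs no measurability. In particular, $x_c$ need not be chosen measurably for this statement. Measurable selection would only matter if one wanted $x_c(\omega)$ to be a measurable function of $\omega$, which the statement does not require.
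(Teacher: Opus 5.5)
Your proposal is correct and follows essentially the same approach as the paper's proof. You use the intermediate value theorem on the continuous map $a\mapsto\rho(F_a)$ to get a parameter, independent of $\omega$, with rotation number $c$. You then choose a point of the global coincidence set in $[0,1]$ (Proposition~\ref{pro:coincidence}) and transfer the convergence from $F_a$ to $F$ using the simultaneous convergence on $\Omega^*$ from Lemma~\ref{lem:simconvergence}.
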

\begin{proof}
By Lemma~\ref{lem:simconvergence}, $a\mapsto \rho(F_a)$ is a continuous and non-decreasing map from the closed interval $[0,1]$ to $\mathbb{R}$. Thus $\{\rho(F_a):a\in [0, 1]\}=[\rho(F_L),\rho(F_U)]$, since the lower random map is $F_L=F_0$ and the upper random map is $F_U=F_1$.
Moreover, given $c\in [\rho(F_L),\rho(F_U)]$, by the intermediate value theorem, there exists $\alpha \in [0, 1]$ such that $\rho(F_{\alpha})=c$. 

By Proposition~\ref{pro:coincidence}, the global coincidence set $S_{\omega,\alpha}\coloneq \{x\in\mathbb R: F_\omega^{(n)}(x)=F_{\omega,\alpha}^{(n)}(x)\text{ for all }n\geq 1\}$ satisfies $S_{\omega, \alpha}\cap [0, 1]\neq \emptyset$. Choose $x_c(\omega)\in S_{\omega, \alpha} \cap [0, 1]$. Then
$F^{(n)}_{\omega}(x_c(\omega))=F^{(n)}_{\omega, \alpha}(x_c(\omega))$ for all \mbox{$n\geq 1$}. Therefore,
\begin{align*}
\lim_{n\to \infty}\frac{F^{(n)}_\omega(x_c(\omega))-x_c(\omega)}{n}=\lim_{n\to \infty}\frac{F^{(n)}_{\omega, \alpha}(x_c(\omega))-x_c(\omega)}{n}=\rho(F_\alpha)=c
\end{align*}  
for all $\omega\in\Omega^*$ by Lemma~\ref{lem:simconvergence}.
\end{proof}

\begin{remark}
The point $x_c(\omega)$ is chosen separately for each pair $(\omega, c)$. Note that we do not require the map $\omega\mapsto x_c(\omega)$ to be measurable or jointly measurable in $(\omega, c)$, since we only use the existence of such a point for each fixed $\omega\in\Omega^*$ in order to realise $c$ by an orbit of the random circle endomorphism. 
\end{remark}

Recall the definition of random rotation set is defined by taking closure of the upper limits of the average displacements along orbits. We now complete the proof of Theorem~\ref{thm:rotset}.

\begin{proof}[Proof of Theorem~\ref{thm:rotset}(1)]
For convenience, we use the notation
$$
A_n(\omega,x)\coloneq \frac{F_\omega^{(n)}(x)-x}{n}.
$$
Recall that by Lemma~\ref{lem:upperlowercomposition} iterated along the base orbit, we have \eqref{eq:loweruppercocycle}, which states that for each $n\in\mathbb{N}$ and $\omega\in\Omega$ we have $(F^{(n)}_{\omega})_L=F^{(n)}_{\omega, L}$ and $(F^{(n)}_{\omega})_U=F^{(n)}_{\omega, U}$.
Thus $F^{(n)}_{\omega, L}$ and $F^{(n)}_{\omega, L}$ are respectively the lower and upper maps of $F^{(n)}_{\omega}$, and so $F^{(n)}_{\omega, L}\leq F^{(n)}_\omega\leq F^{(n)}_{\omega, U}$ for every $n\geq 1$. For each $\omega\in\Omega^*$, the $\sigma$-invariant full-measure set given by Lemma~\ref{lem:simconvergence}, we have 
$$
\lim_{n\to \infty}\frac{F^{(n)}_{\omega, L}(x)-x}{n}=\rho(F_L) \quad\textrm{and}\quad \lim_{n\to \infty}\frac{F^{(n)}_{\omega, U}(x)-x}{n}=\rho(F_U),
$$
for each $x\in\mathbb{R}$. Hence
$$
\rho(F_L)=\lim_{n\to\infty}\frac{F^{(n)}_{\omega, L}(x)-x}{n}\leq \liminf_{n\to \infty}A_n(\omega, x)\leq \limsup_{n\to \infty}A_n(\omega, x)\leq \lim_{n\to\infty}\frac{F^{(n)}_{\omega, U}(x)-x}{n}=\rho(F_U),
$$
which implies that 
$\mathrm{Rot}^+_F(\omega)\subset [\rho(F_L), \rho(F_U)]$ and $\mathrm{Rot}^-_F(\omega)\subset [\rho(F_L), \rho(F_U)]$.
Moreover, if $c\in \mathrm{Rot}^{\mathrm{ord}}_F(\omega)$, then $c$ is the limit of $A_n(\omega, x)$ for some $x$ and the same inequality gives $c\in [\rho(F_L), \rho(F_U)]$. Thus,
\begin{equation}\label{eq:rotsetcontainment}
\mathrm{Rot}^+_F(\omega)\cup\mathrm{Rot}^-_F(\omega)\cup\mathrm{Rot}^{\mathrm{ord}}_F(\omega)\subset [\rho(F_L), \rho(F_U)].
\end{equation}
For the reverse inclusion, let $c\in [\rho(F_L), \rho(F_U)]$ be given. 
Since $\omega\in\Omega^*$, by Proposition~\ref{pro:simrealisation} there exists $x_c(\omega)\in [0, 1]$ such that $ \lim_{n\to \infty} A_n(\omega, x_c(\omega))=c$.
Hence $c\in \mathrm{Rot}^{\mathrm{ord}}_F(\omega)$. Since the same orbit has both upper and lower limits equal to $c$, we also have
$c\in \mathrm{Rot}^+_F(\omega)$ and $c\in \mathrm{Rot}^-_F(\omega)$.
Since $c\in [\rho(F_L), \rho(F_U)]$ was arbitrary,
\begin{equation}\label{eq:reverserrt}
[\rho(F_L), \rho(F_U)]\subset\mathrm{Rot}^+_F(\omega)\cap \mathrm{Rot}^-_F(\omega)\cap \mathrm{Rot}^{\mathrm{ord}}_F(\omega).
\end{equation}

It remains to show the equality of the subsequential definition. Let $c\in \mathrm{Rot}_F^{\mathrm{sub}}(\omega)$. 
Then there exist sequences $(n_k)\to \infty$ and $(y_k)\subset \mathbb{R}$ such that $A_{n_k}(\omega, y_k)\to c$.
Without loss of generality, we can assume $y_k\in[0,1)$ for all $k$, since $F^{(n_k)}(y_k)-y_k=F^{(n_k)}(\mathrm{frac}(y_k))-\mathrm{frac}(y_k)$ by the degree one property, where $\mathrm{frac}(\cdot)$ denotes the fractional part.
For each $k\in\mathbb{N}$, the ordering of the lower and upper maps gives
\begin{align}\label{eq:subsequence}
\frac{F_{\omega,L}^{(n_k)}(y_k)-y_k}{n_k}\leq A_{n_k}(\omega,y_k) \leq \frac{F_{\omega,U}^{(n_k)}(y_k)-y_k}{n_k}.
\end{align}
As $0\leq y_k<1$ for each $k\in\mathbb{N}$ we have 
$$
\frac{F^{(n_k)}_{\omega,L}(0)}{n_k}\leq \frac{F^{(n_k)}_{\omega,L}(y_k)}{n_k} \leq \frac{F^{(n_k)}_{\omega,L}(1)}{n_k} = \frac{F^{(n_k)}_{\omega,L}(0)}{n_k} +\frac{1}{n_k},
$$
where the outer terms tend to $\rho(F_L)$ as $k\to\infty$ for all $\omega\in\Omega^*$. Hence the left-hand term of \eqref{eq:subsequence} tends to $\rho(F_L)$ as $k\to\infty$ for all $\omega\in\Omega^*$. By a similar argument using the upper map $F_U$, the right-hand term of \eqref{eq:subsequence} tends to $\rho(F_U)$ as $k\to\infty$ for all $\omega\in\Omega^*$. Hence $c\in [\rho(F_L), \rho(F_U)]$. Thus
$\mathrm{Rot}_F^{\mathrm{sub}}(\omega)\subset [\rho(F_L), \rho(F_U)]$.
Conversely, if $c\in [\rho(F_L), \rho(F_U)]$, then by Proposition~\ref{pro:simrealisation} there exists $x_c(\omega)$ such that $A_n(\omega, x_c(\omega))\to c$. Taking $y_k=x_c(\omega)$ and $n_k=k$, we obtain $c\in \mathrm{Rot}_F^{\mathrm{sub}}(\omega)$. Therefore
\begin{equation}\label{eq:subrrt}
\mathrm{Rot}_F^{\mathrm{sub}}(\omega)=[\rho(F_L), \rho(F_U)].
\end{equation}

Recall that the random rotation set $\mathrm{Rot}_F(\omega)$ is defined as the closure of $\mathrm{Rot}^+_F(\omega)$. Combining \eqref{eq:subrrt} with \eqref{eq:rotsetcontainment} and \eqref{eq:reverserrt}, we conclude that for all $\omega\in \Omega^*$,
$$
\mathrm{Rot}_F(\omega)=\mathrm{Rot}^+_F(\omega)=\mathrm{Rot}^-_F(\omega)=\mathrm{Rot}^{\mathrm{ord}}_F(\omega)=\mathrm{Rot}^{\mathrm{sub}}_F(\omega)=[\rho(F_L), \rho(F_U)].
$$
\end{proof}

\subsection{Realisation of subintervals}\label{sec:realsubint}
The realisation statement in Theorem~\ref{thm:rotset}(2) produces an orbit with prescribed asymptotic average displacement for each single value in the rotation interval on a common full-measure set. In this section, we prove another realisation result, Theorem \ref{thm:realupperlower} which provides a single orbit, for any $a,b\in\mathrm{Rot}(F)$ with $a\leq b$, for which the lower and upper asymptotic displacement averages are precisely $a$ and $b$ respectively. The full-measure set in this result may depend on the prescribed pair $(a, b)$.

To prove this, we use a version of the coincidence construction based on multiple interpolating parameters. This allows us to demonstrate a point for which the orbit with respect to $F$ alternates between shadowing $F_\alpha$ and $F_\beta$ on consecutive blocks of time, where $\rho(F_\alpha)=a$ and $\rho(F_\beta)=b$.

\begin{definition}(Block coincidence sets) \label{def:blockcoincidence}
Let $F \in \widetilde{\mathcal{E}}(\Omega)$ and fix $\alpha, \beta \in [0, 1]$. Let $(\gamma_m)_{m\geq 1}$ be any sequence with $\gamma_m\in \{\alpha, \beta\}$, and let $(n_m)_{m\geq 1}$ be a sequence of positive integers. For any $m\geq 1$ and $\omega\in \Omega$, the \emph{block coincidence set} $C^{(m)}_\omega$ is defined recursively by $C^{(0)}_\omega\coloneq\mathbb{R}$ and
\begin{align}\label{eq:blockcoincidence}
C^{(m)}_\omega\coloneq C^{(m-1)}_\omega \cap (F^{(T_{m-1})}_\omega)^{-1}\left(S^{n_m}_{\sigma^{T_{m-1}}\omega, \gamma_m}\right), 
\end{align}
where $T_0\coloneq 0$ and $T_m\coloneq n_1+\cdots +n_m$. The \emph{global block coincidence set} is $C_{\omega} \coloneq \bigcap_{m=1}^\infty C^{(m)}_{\omega}$.
\end{definition}

Block coincidence sets have the following properties.

\begin{proposition}(Properties of block coincidence sets) \label{pro:blockcoincidence}
Fix $\alpha, \beta \in [0, 1]$. Let $(\gamma_m)_{m\geq 1}$ be any sequence with $\gamma_m\in \{\alpha, \beta\}$, and let $(n_m)_{m\geq 1}$ be any sequence of positive integers. 
For every $m\geq 0$, $l\in\mathbb{Z}$ and every $\omega\in \Omega$, the block coincidence set $C^{(m)}_\omega$ satisfies the following statements:
\begin{enumerate}
\item (Nestedness) $C^{(m+1)}_\omega\subset C^{(m)}_\omega$;
\item (Closedness) $C^{(m)}_\omega$ is closed;
\item (Integer translational invariance) $C^{(m)}_\omega+l=C^{(m)}_\omega$;
\item (Full coverage) $F^{(T_m)}_\omega(C^{(m)}_\omega)=\mathbb{R}$ and so $C^{(m)}_\omega\neq\emptyset$;
\item (Global block coincidence) $C_{\omega}$ is a non-empty closed set invariant under integer translation.
\end{enumerate}
\end{proposition}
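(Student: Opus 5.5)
The plan is to mirror the proof of Proposition~\ref{pro:coincidence}, with one change: each block now uses the single-parameter coincidence set $S^{n_m}_{\sigma^{T_{m-1}}\omega,\gamma_m}$ transported back by $F^{(T_{m-1})}_\omega$. Nestedness (item 1) is immediate from the recursive definition \eqref{eq:blockcoincidence}, since $C^{(m+1)}_\omega$ is an intersection with $C^{(m)}_\omega$.

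\emph{Closedness.} We argue by induction on $m$. $C^{(0)}_\omega=\mathbb{R}$ is closed. If $C^{(m-1)}_\omega$ is closed, note that $F^{(T_{m-1})}_\omega$ is continuous and $S^{n_m}_{\sigma^{T_{m-1}}\omega,\gamma_m}$ is closed by Proposition~\ref{pro:coincidence}(5). Hence the preimage is closed, and so is the intersection.

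\emph{Integer translation invariance.} Again by induction, using two facts. First, $F^{(T_{m-1})}_\omega(x+l)=F^{(T_{m-1})}_\omega(x)+l$ by the degree one property. Second, $S^{n}_{\cdot,\gamma}$ is invariant under integer translations by Proposition~\ref{pro:coincidence}(4). Together these give $x+l\in(F^{(T_{m-1})}_\omega)^{-1}(S)$ if and only if $x\in(F^{(T_{m-1})}_\omega)^{-1}(S)$.

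\emph{Full coverage.} This is the key step, and I would again use induction on $m$. For $m=0$, $F^{(0)}_\omega=\mathrm{Id}$ maps $\mathbb{R}$ onto $\mathbb{R}$. Assume $F^{(T_{m-1})}_\omega(C^{(m-1)}_\omega)=\mathbb{R}$ and write $\eta=\sigma^{T_{m-1}}\omega$ and $S=S^{n_m}_{\eta,\gamma_m}$. Then
\[
F^{(T_{m-1})}_\omega\bigl(C^{(m-1)}_\omega\cap (F^{(T_{m-1})}_\omega)^{-1}(S)\bigr)=F^{(T_{m-1})}_\omega(C^{(m-1)}_\omega)\cap S=S,
\]
by the elementary identity $g(A\cap g^{-1}(B))=g(A)\cap B$ together with the induction hypothesis. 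By the cocycle property, $F^{(T_m)}_\omega=F^{(n_m)}_\eta\circ F^{(T_{m-1})}_\omega$, so
\[
F^{(T_m)}_\omega(C^{(m)}_\omega)=F^{(n_m)}_\eta(S).
\]
On $S$ the maps $F^{(n_m)}_\eta$ and $F^{(n_m)}_{\eta,\gamma_m}$ coincide by the definition of the coincidence set. Therefore $F^{(n_m)}_\eta(S)=F^{(n_m)}_{\eta,\gamma_m}(S)=\mathbb{R}$ by Proposition~\ref{pro:coincidence}(3). Non-emptiness of $C^{(m)}_\omega$ follows.

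The only point needing care is this bookkeeping: the coverage identity uses $F^{(n_m)}_\eta$ rather than the interpolating map, and the coincidence property is exactly what allows us to switch between the two. Since Proposition~\ref{pro:coincidence} is stated for every $\omega\in\Omega$, no measure-theoretic issue arises.

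\emph{Global set.} Items 1–4 show that the sets $C^{(m)}_\omega\cap[0,1]$ are nonempty, compact and nested. Nonemptiness uses integer translation invariance: any point of $C^{(m)}_\omega$ can be translated into $[0,1]$. By the Cantor intersection theorem, $C_\omega\cap[0,1]\neq\emptyset$. Finally, $C_\omega$ is closed and integer translation invariant, because it is an intersection of sets with both properties.
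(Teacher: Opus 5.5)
Your proposal is correct and follows essentially the same route as the paper. Closedness and translation invariance go by induction. Full coverage uses the identity $g(A\cap g^{-1}(B))=g(A)\cap B$ together with Proposition~\ref{pro:coincidence}(3), and non-emptiness of $C_\omega$ comes from the Cantor intersection theorem. Your explicit use of the coincidence property to replace $F^{(n_m)}_\eta$ by $F^{(n_m)}_{\eta,\gamma_m}$ on $S$ is the step the paper leaves implicit when it writes $F^{(T_m)}_\omega=F^{(n_m)}_{\sigma^{T_{m-1}}\omega,\gamma_m}\circ F^{(T_{m-1})}_\omega$ on $C^{(m)}_\omega$.
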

\begin{proof}
Item 1 follows directly from \eqref{eq:blockcoincidence}. We prove items 2 and 3 together by induction. For $m=0$, the case is immediate. Given $m\geq 1$, for the induction hypothesis, suppose that $C^{(m-1)}_\omega$ is closed and integer translation invariant. Since $S^{n_m}_{\sigma^{T_{m-1}}\omega, \gamma_m}$ is closed and integer translation invariant by items 5 and 4 of Proposition~\ref{pro:coincidence} and $F^{(T_{m-1})}_\omega$ is continuous and degree one, the preimage set $(F^{(T_{m-1})}_\omega )^{-1}(S^{n_m}_{\sigma^{T_{m-1}}\omega,\gamma_m})$ is also closed and integer translation invariant. Hence the set $C^{(m)}_\omega$ is itself closed and integer translation invariant, since by \eqref{eq:blockcoincidence} it is the intersection of two sets with these properties. 

For item 4, we also use induction. Again, the case $m=0$ is immediate. For the induction step, suppose that $F^{(T_{m-1})}_\omega(C^{(m-1)}_\omega)=\mathbb{R}$ holds, and so $C^{(m-1)}_\omega\neq\emptyset$. Applying $F^{(T_{m-1})}_\omega$ to each side of \eqref{eq:blockcoincidence}, we have
\begin{align*}
F^{(T_{m-1})}_\omega(C^{(m)}_\omega) &= F^{(T_{m-1})}_\omega\left( C^{(m-1)}_\omega \cap (F^{(T_{m-1})}_\omega)^{-1}\left(S^{n_m}_{\sigma^{T_{m-1}}\omega, \gamma_m}\right)\right) \\
&= F^{(T_{m-1})}_\omega(C^{(m-1)}_\omega)\cap S^{n_m}_{\sigma^{T_{m-1}}\omega, \gamma_m} \\
&= S^{n_m}_{\sigma^{T_{m-1}}\omega, \gamma_m},
\end{align*}
by the induction hypothesis. Thus, by applying $F^{n_m}_{\sigma^{T_{m-1}}\omega,\gamma_m}$, we obtain
\begin{align*}
F^{(T_{m})}_\omega(C^{(m)}_\omega) =\left(F^{(n_m)}_{\sigma^{T_{m-1}}\omega,\gamma_m}\circ F^{(T_{m-1})}_\omega\right)(C^{(m)}_\omega) =F^{(n_m)}_{\sigma^{T_{m-1}}\omega,\gamma_m}\left(S^{n_m}_{\sigma^{T_{m-1}}\omega, \gamma_m}\right)=\mathbb{R}
\end{align*}
by Proposition~\ref{pro:coincidence}(3). In particular, this implies that $C^{(m)}_\omega$ is non-empty.

For item 5, it follows from the previous items that for each $l\in\mathbb{Z}$, the sets $C^{(m)}_\omega\cap [l,l+1]$, $m\geq0 $, are non-empty nested compact sets. Therefore, by the Cantor intersection theorem, their intersection $C_\omega\cap [l,l+1]$ is non-empty. Since $C_\omega$ is an intersection of integer translation invariant sets by item 3, it is also invariant under integer translation.
\end{proof}

We use block coincidence sets to prove Theorem~\ref{thm:realupperlower}.

\begin{proof}[Proof of Theorem~\ref{thm:realupperlower}]
If $a=b$, the statement reduces to Proposition \ref{pro:simrealisation}. So assume that $a<b$. By Lemma~\ref{lem:simconvergence}, $t\mapsto \rho(F_t)$ is a continuous and non-decreasing map from the closed interval $[0,1]$ to $\mathbb{R}$, so there exist $\alpha, \beta\in [0, 1]$ with $\alpha\leq \beta$ such that $\rho(F_\alpha)=a$ and $\rho(F_\beta)=b$.
Let $\Omega_{a, b}\coloneq\Omega_\alpha \cap \Omega_\beta$, where $\Omega_\alpha$ and $\Omega_\beta \subset \Omega$ are $\sigma$-invariant full-measure sets satisfying
\begin{align*}
\lim_{n\to \infty}\frac{F^{(n)}_{\omega, \alpha}(0)}{n}=a, \quad\textrm{for a.e. }\omega\in\Omega_\alpha \quad\textrm{and}\quad \lim_{n\to \infty}\frac{F^{(n)}_{\omega, \beta}(0)}{n}=b \quad \textrm{for a.e. }\omega \in \Omega_\beta.
\end{align*}

Given $\zeta\in \{\alpha, \beta\}$ and $\eta\in \Omega_\zeta$, since $F_\zeta\in\widetilde{\mathcal{M}}(\Omega)$ and $\sigma$ is ergodic, by \eqref{eq:LiLu} we have $(1/n)(F^{(n)}_{\eta,\zeta}(y)-y)\to \rho(F_\zeta)$ as $n\to\infty$ for each $y\in\mathbb{R}$. Since $\Delta F^{(n)}_{\eta,\zeta}=F^{(n)}_{\eta,\zeta}-\mathrm{Id}$ is $1$-periodic, given $\varepsilon>0$, there exists $N_\zeta(\eta,\varepsilon)\in\mathbb{N}$ such that
\begin{equation}\label{eq:pointsup}
\sup_{y \in \mathbb{R}}\left|\frac{F^{(n)}_{\eta, \zeta}(y)-y}{n}-\rho(F_\zeta)\right|<\varepsilon \quad \textrm{for every }n\geq N_\zeta(\eta, \varepsilon).
\end{equation}

Fix $\omega\in \Omega_{a, b}$. We consider the composition formed by switching back and forth between the $\alpha$- and $\beta$-interpolating maps on alternate blocks of time:
\begin{align}\label{eq:blockcomposition}
\cdots
\circ
\overbrace{F_{\sigma^{T_3-1}\omega,\alpha} \circ \cdots \circ F_{\sigma^{T_2}\omega,\alpha}}^{n_3\textrm{ times}} 
\circ
\overbrace{F_{\sigma^{T_2-1}\omega,\beta} \circ \cdots \circ F_{\sigma^{T_1}\omega,\beta}}^{n_2\textrm{ times}}
\circ 
\overbrace{F_{\sigma^{T_1-1}\omega,\alpha} \circ \cdots \circ F_{\omega,\alpha}}^{n_1\textrm{ times}}.
\end{align}
So define an alternating sequence $(\gamma_m)_{m\geq 1}\subset \{\alpha, \beta\}$ by setting $\gamma_{2m-1}=\alpha$ and $\gamma_{2m}=\beta$ for each $m\geq 1$. We choose positive integers $n_m$ recursively and write $T_0=-$ and $T_m = n_1 + \cdots +n_m$. We define
\begin{align}\label{eq:Mbound}
M_{m-1}(\omega)\coloneq\begin{cases}
0 & m=1\\
\sup_{y\in [0, 1]}\left|\frac{F^{(T_{m-1})}_\omega(y)-y}{T_{m-1}}\right| & m\geq 2.
\end{cases}
\end{align}
Take a sequence $\varepsilon_m$ with $\lim_{m\to\infty}\varepsilon_m=0$. Choose $n_m$ so large that
\begin{equation}\label{eq:largenm}
n_m \geq N_{\gamma_m}(\sigma^{T_{m-1}}\omega, \varepsilon_m)
\end{equation}
and
\begin{align}\label{eq:Tmbound}
\frac{T_{m-1}}{T_m}(M_{m-1}(\omega)+\max\{|a|, |b|\})<\varepsilon_m.
\end{align}
This is possible because $\Delta F^{(T_{m-1})}_\omega(y)=F^{(T_{m-1})}_\omega(y)-y$ is continuous and $1$-periodic in $y$, so $M_{m-1}(\omega)$ is finite for each fixed $m$.

We apply Proposition~\ref{pro:blockcoincidence} to the sequences $(\gamma_m)_{m\geq 1}$ and $(n_m)_{m\geq 1}$ to obtain block coincidence sets $C^{(m)}_\omega$ for $m\geq 0$ and a global block coincidence set $C_\omega$. 
Take $x\coloneq x_{a, b}(\omega)\in C_\omega\cap[0,1]$.
For each $m\geq 1$, denote by $z_m\coloneq F^{(T_{m-1})}_\omega(x)$, the value of the orbit of $x$ at the start of the $m$th time block. Then $z_m\in S^{n_m}_{\sigma^{T_{m-1}}\omega, \gamma_m}$, and therefore
\begin{equation}\label{eq:coinvalue}
F^{(T_{m-1}+r)}_\omega(x)= F^{(r)}_{\sigma^{T_{m-1}}\omega, \gamma_m}(z_m) \quad \textrm{for every }0\leq r\leq n_m.
\end{equation}
We set
$$
A_m\coloneq\frac{F^{(T_m)}_\omega(x)-x}{T_m}\quad\textrm{and}\quad B_m\coloneq \frac{F^{(n_m)}_{\sigma^{T_{m-1}}\omega, \gamma_m}(z_m)-z_m}{n_m},
$$
and we have
$$
A_m=\frac{T_{m-1}}{T_m}A_{m-1}+\frac{n_m}{T_m}B_m \quad \textrm{for every }m\geq 1,
$$
with the convention $A_0=0$. By \eqref{eq:pointsup} and \eqref{eq:largenm}, we have that $|B_m-\rho(F_{\gamma_m})|<\varepsilon_m$.
By \eqref{eq:Mbound}, we have $|A_{m-1}|\leq M_{m-1}(\omega)$. Hence
\begin{align*}
|A_m-\rho(F_{\gamma_m})|&\leq \frac{T_{m-1}}{T_m}|A_{m-1}-\rho(F_{\gamma_m})|+\frac{n_{m}}{T_m}|B_{m}-\rho(F_{\gamma_m})|\\
&=\frac{T_{m-1}}{T_m}(M_{m-1}(\omega)+\max\{|a|, |b|\})+\varepsilon_m<2\varepsilon_m,
\end{align*}
by \eqref{eq:Tmbound}. Therefore,
\begin{equation}\label{eq:theboundofam}
A_{2m-1}\to a, \quad A_{2m}\to b.
\end{equation}
So
$$
\liminf_{n\to \infty}\frac{F^{(n)}_\omega(x)-x}{n}\leq a\quad\textrm{and}\quad\limsup_{n\to \infty}\frac{F^{(n)}_\omega(x)-x}{n}\geq b.
$$

For each $j\geq 0$, let $G_{j, \omega}$ denote the fibre map applied in \eqref{eq:blockcomposition} at time $j$: that is,
$G_{j, \omega}=F_{\sigma^j\omega, \gamma_m}$ for $T_{m-1}\leq j <T_m$. Let $H^{(n)}_\omega\coloneq G_{n-1, \omega}\circ \cdots \circ G_{0, \omega}$.
By \eqref{eq:coinvalue}, for each $m\geq 1$, and for each $n$ in the $m$th time block $[T_{m-1},T_m)$, we have $H^{(n)}_\omega(x)=F^{(n)}_\omega(x)$.
Therefore,
\begin{align}\label{eq:orbitcoincidence}
F^{(n)}_\omega(x)=H^{(n)}_\omega(x)\quad \textrm{for every }n\geq 0.
\end{align}
Since $\alpha\leq \gamma_m\leq \beta$ for each $m\geq 1$, by the monotonicity of the interpolating maps, we have
$F_{\sigma^j\omega,\alpha}\leq G_{j,\omega}\leq F_{\sigma^j\omega,\beta}$ for every $j\geq 0$. Since monotonicity is preserved by composition, we have $F^{(n)}_{\omega, \alpha}(x)\leq H^{(n)}_\omega(x)\leq F^{(n)}_{\omega, \beta}(x)$ for each $n\in\mathbb{N}$.
Hence
$$
\frac{F^{(n)}_{\omega, \alpha}(x)-x}{n}\leq \frac{H^{(n)}_\omega(x)-x}{n}\leq \frac{F^{(n)}_{\omega, \beta}(x)-x}{n}.
$$
Taking upper and lower limits, by \eqref{eq:orbitcoincidence} we find
$$
a=\rho(F_\alpha)\leq \liminf_{n\to \infty}\frac{F^{(n)}_{\omega}(x)-x}{n}\leq \limsup_{n\to \infty}\frac{F^{(n)}_{\omega}(x)-x}{n}\leq \rho(F_\beta)=b.
$$
Combining this with \eqref{eq:theboundofam}, we have
$$
\liminf_{n\to \infty}\frac{F^{(n)}_{\omega}(x)-x}{n}=a \quad\textrm{and}\quad \limsup_{n\to \infty}\frac{F^{(n)}_{\omega}(x)-x}{n}=b.
$$   
\end{proof}

As a consequence of Theorem \ref{thm:realupperlower}, we can show that any closed subinterval of the random rotation set can be realised almost surely as the set of accumulation points of the average displacement along a single orbit.

\begin{proof}[Proof of Corollary~\ref{cor:realsubint}]
If $a=b$, the result follows from the Proposition~\ref{pro:simrealisation}. Therefore we assume $a<b$. By Theorem~\ref{thm:realupperlower}, there exists a full measure subset $\Omega_{a, b}$ with the property that for every $\omega\in \Omega_{a, b}$, there exists a point $x_I(\omega)\in \mathbb{R}$ with
$$
\liminf_{n\to\infty}\frac{F^{(n)}_\omega(x_I(\omega))-x_I(\omega)}{n}=a \quad\textrm{and}\quad  \limsup_{n\to\infty}\frac{F^{(n)}_\omega(x_I(\omega))-x_I(\omega)}{n}=b.
$$

We now prove that the interval $[a, b]$ is realised as the set of accumulation values of these displacement averages. 
Define $\psi(\omega)\coloneq \|\Delta F_\omega\|=\sup_{y\in \mathbb{R}}|F_\omega(y)-y|$. By the integrability condition \eqref{eq:integrability}, we have $\psi \in L^1(\Omega)$. For each $n\geq 1$, we have 
$$
F^{(n+1)}_\omega(x_I(\omega))-x_I(\omega)=F^{(n)}_\omega(x_I(\omega))-x_I(\omega)+ F_{\sigma^n\omega}\left(F^{(n)}_\omega(x_I(\omega))\right)-F^{(n)}_\omega(x_I(\omega)).
$$
Set
$$
u_n=\frac{F^{(n)}_\omega(x_I(\omega))-x_I(\omega)}{n}.
$$
Then,
$$
u_{n+1}-u_n=\frac{ F_{\sigma^n\omega}\left(F^{(n)}_\omega(x_I(\omega))\right)-F^{(n)}_\omega(x_I(\omega))}{n+1}-\frac{u_n}{n+1}.
$$
Hence
$$|u_{n+1}-u_n|\leq \frac{\psi(\sigma^n\omega)}{n+1}+\frac{|u_n|}{n+1}.
$$
So we have
$$\left|\frac{F^{(n)}_\omega(x_I(\omega))-x_I(\omega)}{n}\right|=|u_n|\leq \frac{1}{n}\sum^{n-1}_{k=0}\psi(\sigma^k\omega).$$

By Birkhoff's ergodic theorem, the right-hand side is finite for almost every $\omega$. Moreover, since $\psi\in L^1(\Omega)$, by the Borel-Cantelli lemma, $\psi(\sigma^n\omega)/n\to 0$ for almost every $\omega$. Indeed, for each $\varepsilon>0$,
$$
\sum^{\infty}_{n=1}\mathbb P\{\psi(\sigma^n\omega)>\varepsilon n\}
=
\sum_{n=1}^\infty
\mathbb P\{\psi(\omega)>\varepsilon n\}
<\infty.
$$
Thus, after replacing $\Omega_{a, b}$ by a smaller full-measure set if necessary, we have $u_{n+1}-u_n\to 0$.

Now we identify the accumulation points of $(u_n)$. Since $\liminf_{n\to \infty} u_n=a$ and $\limsup_{n\to \infty} u_n=b$, every accumulation point $(u_n)$ belongs to $[a, b]$, and both $a$ and $b$ are accumulation points of $(u_n)$. Conversely, take any $c\in (a, b)$. We shall show that $c$ is an accumulation point of $(u_n)$. Let $\varepsilon>0$ be small enough such that $a<c-\varepsilon<c+\varepsilon<b$. Since $\liminf_{n\to \infty} u_n=a$, the sequence $u_n$ is below $c-\varepsilon$ infinitely often. Since $\limsup_{n\to \infty} u_n=b$, the sequence $u_n$ is above $c+\varepsilon$ infinitely often.

Choose $N$ such that $|u_{n+1}-u_n|<\varepsilon$ for all $n\geq N$. Choose $i\geq N$ such that $u_i<c-\varepsilon$. Since $u_n>c+\varepsilon$ infinitely often, let $j>i$ be the first index after $i$ such that $u_j>c+\varepsilon$. By the minimality of $j$, $u_{j-1}\leq c+\varepsilon$. Since $|u_j-u_{j-1}|< \varepsilon$, we obtain $c+\varepsilon<u_j<c+2\varepsilon$.
Since the initial index $i$ can be chosen arbitrarily large, this argument gives arbitrary large $j$ with $c+\varepsilon<u_j<c+2\varepsilon.$ Taking a sequence $ \varepsilon_k\downarrow 0$, we obtain a subsequence of $(u_n)$ converging to $c$. Hence each $c\in (a, b)$ is an accumulation point. Therefore
$\operatorname{Clust}\{u_n: n\geq 1\}=[a, b]$.
That is,
$$
\operatorname{Clust}\left\{\frac{F^{(n)}_\omega(x_I(\omega))-x_I(\omega)}{n}: n\geq 1\right\}=[a, b]=I.
$$
\end{proof}

\subsection{A Bernoulli example}\label{sec:Bernoulli}

We give an example of a random circle endomorphism and a demonstration of how to calculate the random rotation set.

\begin{example}(Calculating the random rotation set for a random piecewise linear circle endomorphism)\label{ex:Bernoulli}
Given $p\in(0,1)$, we construct a specific random circle endomorphism based on two piecewise linear circle maps $f_{+1}$ and $f_{-1}$, where the random compositions are determined by a Bernoulli process, with map $f_{+1}$ selected with probability $p$ and map $f_{-1}$ selected with probability $1-p$. More specifically, consider the one-sided sequence space $\Omega=\{+1,-1\}^{\mathbb{N}}$ and let $(\Omega, \mathcal{F}, \mathbb{P}, \sigma)$ be the one-sided $(p, 1-p)$-shift transformation on the two symbols $+1$ and $-1$. Consider the random circle endomorphism for which the lift $F\in\widetilde{E}(\Omega)$, denoted by $F(\omega)\coloneq F_{\omega_1}$, of the generator $f\in\mathcal{E}(\Omega)$ is given on the interval $[0, 1)$ by
$$
F_{-1}|_{[0, 1)}(x)=\begin{cases}
    -3x & 0\leq x< 1/3\\
    3x-2  &  1/3 \leq x <1\\
\end{cases}
\quad\textrm{and}\quad
F_{+1}|_{[0, 1)}(x)=\begin{cases}
    3x & 0 \leq x<2/3\\
    -3x+4 & 2/3 \leq x <1\\
\end{cases},
$$
from which each map is extended to the real line using the degree-one property (see Figure \ref{fig:Bernoulli}). Note that when restricted to the integers, both $F_{-1}$ and $F_{+1}$ reduce to the identity map.

\begin{figure}
\begin{center}
\includegraphics[scale=0.35]{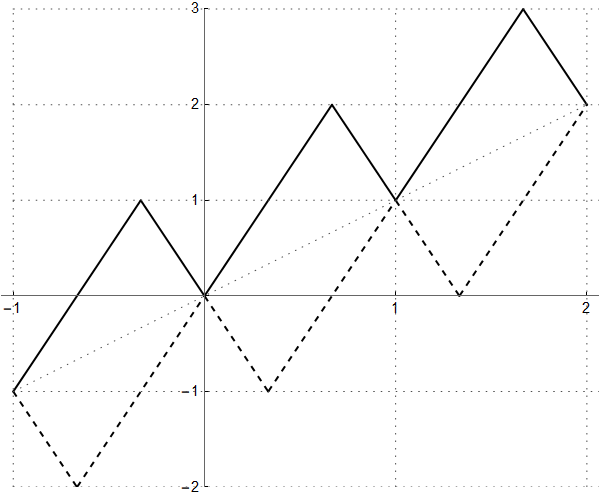}
\end{center}
\caption{Graphs of the lifted fibre maps $F_{+1}$ (solid line) and $F_{-1}$ (dashed line).}
\label{fig:Bernoulli}
\end{figure}

For each $\omega\in \Omega$, we construct a point $x=x(\omega)\in[0,1]$ in terms of a base 3 expansion as follows:
$$
x=x(\omega)=(0.x_1 x_2 x_3\cdots)_{\textrm{base}\,3}=\sum_{n=1}^\infty\frac{x_n}{3^{n}}\in [0, 1], 
$$
where the base~3 digits $x_n$ of $x(\omega)$ satisfy
$$
x_n=\begin{cases}
    1 & \textrm{ if } \omega_n=+1\\
    2 & \textrm{ if } \omega_n=-1
\end{cases}.
$$
We have $x(\omega)\in [1/3,2/3]$ if $\omega_1=+1$ and $x(\omega)\in [2/3,1]$ if $\omega_1=-1$.  Note that $F_{+1,U}|_{[1/3,2/3]}=F_{+1}|_{[1/3,2/3]}$ is an affine map onto $[1,2]$ and $F_{-1,U}|_{[1/3,2/3]}=F_{-1}|_{[1/3,2/3]}$ is an affine map onto $[0,1]$.
So we have
$$
F_\omega(x)=F_{\omega,U}(x)=(0.x_2 x_3 \cdots)_{\textrm{base}\,3}+\begin{cases}
    0& \textrm{ if } \omega_1=-1\\
    1& \textrm{ if } \omega_1=+1
\end{cases}.
$$
By induction, for each $n\in \mathbb{N}$, we have $F^{(n)}_\omega(x)=F^{(n)}_{\omega,U}(x)=(0.x_{n+1} x_{n+2} \cdots)_{\textrm{base}\,3}+K(n, \omega)$,
where $K(n, \omega)=\#\left\{1\leq i\leq n: \omega_i=+1\right\}$.  Thus we have $\lvert F^{(n)}_{\omega,U}(x)-K(n, \omega)\rvert \leq 1$
for all $n\in \mathbb{N}$. Therefore, by Theorem \ref{thm:rotset},
$$
\rho_{F_U}(\omega)=\lim_{n\to\infty}\frac{F^{(n)}_{\omega,U}(x)-x}{n}=\lim_{n\to\infty}\frac{K(n, \omega)}{n}=\lim_{n\to\infty}\frac{1}{n}\sum^{n-1}_{i=0}\chi_A(\sigma^i\omega), 
$$
for almost every $\omega\in\Omega$ where $A=\left\{\omega\in \Omega:\omega_1=+1\right\}$. Thus by Birkhoff's ergodic theorem applied to the ergodic transformation $\sigma$ with the observable function $\chi_A$, we have $\rho_{F_U}(\omega)=p$ for almost every $\omega\in\Omega$. 

Similarly, given $\omega\in \Omega$, we construct a point $x'=x'(\omega)$, in terms of a base 3 expansion, as follows:
$$
x'=x'(\omega)=(0.x_1 x_2 x_3\cdots)_{\textrm{base}\,3}=\sum_{n=1}^\infty\frac{x'_n}{3^{n}}\in [0, 1], 
$$
where the base~3 digits $x'_n$ of $x'(\omega)$ satisfy
$$
x'_n=\begin{cases}
    0 & \textrm{ if } \omega_n=+1\\
    1 & \textrm{ if } \omega_n=-1
\end{cases}.
$$
We have $x'(\omega)\in [0,1/3]$ if $\omega_1=+1$ and $x'(\omega)\in [1/3,2/3]$ if $\omega_1=-1$. Note that $F_{+1,L}|_{[0,1/3]}=F_{+1}|_{[0,1/3]}$ is an affine map onto $[0,1]$ and $F_{-1,L}|_{[0,1/3]}=F_{-1}|_{[0,1/3]}$ is an affine map onto $[-1,0]$.
So we have
$$
F_\omega(x')=F_{\omega,L}(x')=(0.x_2 x_3 \cdots)_{\textrm{base}\,3}+\begin{cases}
    -1& \textrm{ if } \omega_1=-1\\
    0& \textrm{ if } \omega_1=+1
\end{cases}.
$$
Following similar steps with before, now using the characteristic function of the set $\{\omega\in \Omega:\omega_n=-1\}$, we obtain
$$
\rho_{F_L}(\omega)=\lim_{n\to \infty}\frac{F^{(n)}_{\omega,L}(x')-x'}{n}=-(1-p) \quad \textrm{for a.e.\ }\omega\in \Omega.
$$
Hence, $\rho(F)=[-(1-p),p]$.
\end{example}

\subsection{Properties of random rotation sets}\label{sec:proprotset}
In this section, we introduce some elementary properties of the random rotation set which are used to explain its independence with respect to the choice of the lift and its behaviour with respect to order and its cohomological equivalence.

\begin{proposition}(Invariance of length of random rotation set)\label{pro:lengthrotset}
Let $(\Omega, \mathcal{F}, \mathbb{P}, \sigma)$ be a measure-preserving dynamical system, let $f\in \mathcal{E}(\Omega)$, and let $F,G\in \widetilde{\mathcal{E}}(\Omega)$ be random lifts of $f$. Then there exists a $\sigma$-invariant full measure set $\Omega_0\subset \Omega$ such that 
$$
|\mathrm{Rot}_F(\omega)|=|\mathrm{Rot}_G(\omega)|\quad  \textrm{for a.e. }\omega\in \Omega_0.
$$
\end{proposition}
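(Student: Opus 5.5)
Two random lifts of the same generator differ by an integer-valued function: for each $\omega$, $G_\omega - F_\omega$ is continuous, integer-valued, hence constant, so $G_\omega = F_\omega + k(\omega)$ with $k:\Omega\to\mathbb{Z}$. The plan is to show that $\mathrm{Rot}_G(\omega)$ is a translate of $\mathrm{Rot}_F(\omega)$ by $\bar k(\omega)=\lim_n \frac1n\sum_{i=0}^{n-1}k(\sigma^i\omega)$, on a full-measure invariant set where this Birkhoff limit exists. Translation preserves length.

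First, $k$ is measurable, because $k(\omega)=G_\omega(0)-F_\omega(0)$. It is integrable, because $|k(\omega)|\le \|\Delta G_\omega\|+\|\Delta F_\omega\|$ and both terms lie in $L^1$ by \eqref{eq:integrability}.

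Next, the degree one property gives $F_\omega(y+m)=F_\omega(y)+m$ for $m\in\mathbb{Z}$. By induction on $n$ this yields
$$
G^{(n)}_\omega(x)=F^{(n)}_\omega(x)+\sum_{i=0}^{n-1}k(\sigma^i\omega)
$$
for every $x\in\mathbb{R}$. The inductive step is
$$
G^{(n+1)}_\omega(x)=F_{\sigma^n\omega}\bigl(F^{(n)}_\omega(x)+S_n\bigr)+k(\sigma^n\omega),
$$
where $S_n=\sum_{i=0}^{n-1}k(\sigma^i\omega)$. Since $S_n$ is an integer, the degree one property pulls it out of $F_{\sigma^n\omega}$.

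Then let $\Omega_0$ be the set where $\frac1n S_n(\omega)$ converges. By Birkhoff's ergodic theorem (no ergodicity needed) it has full measure, and it is $\sigma$-invariant because the Birkhoff limit along $\sigma\omega$ exists if and only if it exists along $\omega$. For $\omega\in\Omega_0$ and every $x$,
$$
\limsup_n \frac{G^{(n)}_\omega(x)-x}{n}=\limsup_n\frac{F^{(n)}_\omega(x)-x}{n}+\bar k(\omega),
$$
since adding a convergent sequence shifts the limsup by its limit. Hence $\mathrm{Rot}^+_G(\omega)=\mathrm{Rot}^+_F(\omega)+\bar k(\omega)$, and taking closures gives $\mathrm{Rot}_G(\omega)=\mathrm{Rot}_F(\omega)+\bar k(\omega)$.

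The main obstacle is interpreting $|\cdot|$ for a general closed set over a non-ergodic base, where $\mathrm{Rot}_F(\omega)$ need not be an interval. Here I would read $|\cdot|$ as Lebesgue measure or as diameter; both are invariant under translation, so the equality holds for every $\omega\in\Omega_0$. A secondary point is that possibly infinite limsups would need the convention $\pm\infty+\bar k=\pm\infty$. Integrability actually rules this out: $|F^{(n)}_\omega(x)-x|\le\sum_{i<n}\|\Delta F_{\sigma^i\omega}\|$, so after intersecting $\Omega_0$ with the Birkhoff set for $\|\Delta F\|$, all limsups are finite.
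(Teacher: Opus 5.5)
Your proposal is correct and follows essentially the same route as the paper. Both arguments use the integer-valued integrable difference $N=G-F$, the cocycle identity $G^{(n)}_\omega=F^{(n)}_\omega+\sum_{i=0}^{n-1}N(\sigma^i\omega)$, and a Birkhoff limit $N^*(\omega)$ on a $\sigma$-invariant full-measure set, which gives the translation $\mathrm{Rot}_G(\omega)=\mathrm{Rot}_F(\omega)+N^*(\omega)$. Your extra remarks on the measurability of $k$, the finiteness of the upper limits, and the meaning of $|\cdot|$ make explicit points the paper leaves implicit.
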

\begin{proof}
Let $N(\omega)$ be defined for each $\omega\in\Omega$ by $N(\omega)\coloneq G_\omega-F_\omega$. As $F_\omega$ and $G_\omega$ are lifts of the same map of the circle, their difference $N(\omega)$ is an integer. Since $F, G\in \widetilde{\mathcal{E}}(\Omega)$, we have $N\in L^1(\Omega)$. Thus we have an integrable integer-valued function $N:\Omega\to\mathbb{Z}$.
By repeated use of the degree one property \eqref{eq:degree1}, 
$$
G^{(n)}_\omega(x)= F^{(n)}_\omega(x)+\sum_{i=0}^{n-1} N(\sigma^i\omega).
$$
So we have
\begin{equation}\label{eq:tworandomlifts}
\frac{G^{(n)}_\omega(x)-x}{n}=\frac{F^{(n)}_\omega(x)-x}{n}+\frac{\sum^{n-1}_{i=0}N(\sigma^i\omega)}{n}.
\end{equation}
Since $N$ is integrable, by Birkhoff's ergodic theorem there exists a $\sigma$-invariant full measure subset $\Omega_0$ such that for every $\omega\in \Omega_0$,
$$
N^*(\omega)=\lim_{n\to \infty}\frac{1}{n}\sum^{n-1}_{i=0}N(\sigma^i\omega)
$$
exists. By \eqref{eq:tworandomlifts}, for every $\omega\in \Omega_0$ and any $x\in \mathbb{R}$, we have
\begin{equation}\label{eq:limsuptworandomlifts}
\limsup_{n\to\infty}\frac{G^{(n)}_\omega(x)-x}{n}=\limsup_{n\to \infty}\frac{F^{(n)}_\omega(x)}{n}+N^*(\omega)
\end{equation}
By \eqref{eq:limsuptworandomlifts}, for each $\omega\in \Omega_0$ we have $\mathrm{Rot}_G^+(\omega)=\mathrm{Rot}_F^+(\omega)+N^*(\omega)$
and so by taking the closure, we also have $\mathrm{Rot}_G(\omega)=\mathrm{Rot}_F(\omega)+N^*(\omega)$.
Hence the length of the random rotation set is independent of the choice of random lift for almost every $\omega\in\Omega$.
\end{proof}

Thus, although the position of the random rotation set depends on the choice of random lift, its length does not. The next lemma demonstrates the monotonicity of the random rotation set with respect to the pointwise order on random lifts.

\begin{lemma}[Monotonicity of the random rotation set]\label{lem:monotonicityrotset}
Let $f$ be a random circle endomorphism and let $F, G\in\widetilde{\mathcal{E}}(\Omega)$ be two random lifts of $f$. If $F_\omega(x)\leq G_\omega(x)$ for all $x\in \mathbb{R}$ and almost every $\omega\in\Omega$, then we have
$$
\mathrm{Rot}_F(\omega)\preceq \mathrm{Rot}_G(\omega) \quad \textrm{for a.e.\ }\omega\in \Omega, 
$$
where, for compact intervals $I, J \subset \mathbb{R}$, $I \preceq J$ means that 
$\inf I \leq \inf J$ and $\sup I \leq \sup J$.
\end{lemma}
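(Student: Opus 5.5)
The approach is to compare the lower and upper maps of $F$ and $G$ fibrewise, and then pass this comparison to their random rotation numbers. First I check that the pointwise order is inherited by the envelopes. If $F_\omega\le G_\omega$ on $\mathbb{R}$, then for every $x$
$$
F_{\omega,L}(x)=\inf_{y\ge x}F_\omega(y)\le \inf_{y\ge x}G_\omega(y)=G_{\omega,L}(x),
$$
and likewise $F_{\omega,U}\le G_{\omega,U}$. This holds for almost every $\omega$. All four envelope maps are non-decreasing, so the order survives composition along the base orbit, by induction on $n$:
$$
F^{(n)}_{\omega,L}=F_{\sigma^{n-1}\omega,L}\circ F^{(n-1)}_{\omega,L}\le F_{\sigma^{n-1}\omega,L}\circ G^{(n-1)}_{\omega,L}\le G_{\sigma^{n-1}\omega,L}\circ G^{(n-1)}_{\omega,L}=G^{(n)}_{\omega,L}.
$$
The first inequality uses monotonicity of $F_{\sigma^{n-1}\omega,L}$, and the second uses the one-step comparison at the base point $\sigma^{n-1}\omega$. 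The same argument applies to the upper maps. To make this valid for every $n$, I restrict to the $\sigma$-invariant full-measure set $\bigcap_{k\ge0}\sigma^{-k}\{\omega: F_\omega\le G_\omega\}$.

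Next I apply \eqref{eq:LiLu} to the four integrable random monotone lifts $F_L,F_U,G_L,G_U\in\widetilde{\mathcal{M}}(\Omega)$. Taking $x=0$ and intersecting the resulting full-measure sets with the one above, I divide the composed inequalities by $n$ and let $n\to\infty$. This gives
$$
\rho_{F_L}(\omega)\le\rho_{G_L}(\omega)\quad\text{and}\quad \rho_{F_U}(\omega)\le\rho_{G_U}(\omega)\qquad\text{for a.e. }\omega.
$$

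Finally I identify $\inf\mathrm{Rot}_F(\omega)$ and $\sup\mathrm{Rot}_F(\omega)$ with $\rho_{F_L}(\omega)$ and $\rho_{F_U}(\omega)$, and do the same for $G$. In the ergodic case this is exactly Theorem~\ref{thm:rotset}(1), applied to $F$ and to $G$ on the intersection of the two sets $\Omega^*$. Intersecting further with the set from the previous step gives $\mathrm{Rot}_F(\omega)\preceq\mathrm{Rot}_G(\omega)$ almost surely.

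The main obstacle is that the lemma is stated without an ergodicity hypothesis. In that case I would split the identification into two parts.

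\emph{Inclusion $\mathrm{Rot}_F(\omega)\subset[\rho_{F_L}(\omega),\rho_{F_U}(\omega)]$.} This holds for a.e. $\omega$ by the sandwich $F^{(n)}_{\omega,L}\le F^{(n)}_\omega\le F^{(n)}_{\omega,U}$ from \eqref{eq:loweruppercocycle}. That argument does not use ergodicity.

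\emph{Attainment of the endpoints.} To show the endpoints belong to $\mathrm{Rot}_F(\omega)$, I would use coincidence points. By Proposition~\ref{pro:coincidence}(6) with $a=0$ and $a=1$, which holds for every $\omega$, there are points whose $F$-orbits coincide with their $F_L$-orbits, respectively their $F_U$-orbits. Their limsup averages therefore equal $\rho_{F_L}(\omega)$ and $\rho_{F_U}(\omega)$, where these limits exist.

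Together these give $\inf\mathrm{Rot}_F=\rho_{F_L}$ and $\sup\mathrm{Rot}_F=\rho_{F_U}$ almost surely without ergodicity, and the comparison of rotation numbers above then finishes the proof. The only care needed is to collect countably many full-measure sets into one.
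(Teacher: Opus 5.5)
Your proposal is correct. Its core matches the paper's proof: pass the order $F_\omega\le G_\omega$ to the envelopes, compare the rotation numbers of $F_L,G_L$ and of $F_U,G_U$, then identify $\inf\mathrm{Rot}_F(\omega)$ and $\sup\mathrm{Rot}_F(\omega)$ with $\rho_{F_L}(\omega)$ and $\rho_{F_U}(\omega)$. The paper simply cites monotonicity of the random rotation number for the middle step. You derive it directly by composing the envelope inequalities along the base orbit on $\bigcap_{k\ge0}\sigma^{-k}\{F_\omega\le G_\omega\}$ and evaluating at $x=0$.

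The genuine difference is the identification step. The paper appeals to Theorem~\ref{thm:rotset}, which is proved only for an ergodic base, even though the lemma is stated for a general measure-preserving system. Your non-ergodic argument avoids this. The sandwich $F^{(n)}_{\omega,L}\le F^{(n)}_\omega\le F^{(n)}_{\omega,U}$ gives the inclusion, and points of $S_{\omega,0}$ and $S_{\omega,1}$ from Proposition~\ref{pro:coincidence}(6) give the endpoints. This uses only the parameters $a=0,1$, so it needs neither ergodicity, the intermediate interpolating maps, nor continuity of $a\mapsto\rho(F_a)$. Your route therefore proves the lemma in the generality in which it is stated.

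Two small points. First, the coincidence points depend on $\omega$, so you need the Li--Lu limits at every $x$ on one full-measure set. This follows from the $x=0$ case: for non-decreasing degree-one lifts, $0\le F^{(n)}_{\omega,L}(x)-F^{(n)}_{\omega,L}(0)-\lfloor x\rfloor\le 1$, and likewise for $F_U$. Second, without ergodicity your argument shows only that $\mathrm{Rot}_F(\omega)$ is compact with the stated infimum and supremum, not that it is an interval. Since $\preceq$ compares only infima and suprema, this does not affect the conclusion.
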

\begin{proof}
Since $F_\omega(x)\leq G_\omega(x)$ for all $x\in \mathbb{R}$ and $\omega\in \Omega$, the upper and lower maps satisfy
$F_{\omega,L}(x)\le G_{\omega,L}(x)$ and $F_{\omega,U}(x)\le G_{\omega,U}(x)$.
By the monotonicity of the random rotation number for random monotone circle maps, we have
$\rho_{F_L}(\omega)\leq \rho_{G_L}(\omega)$ and $\rho_{F_U}(\omega)\le \rho_{G_U}(\omega)$.
On the other hand, by Theorem~\ref{thm:rotset}, on a full measure subset $\Omega'_F\subset\Omega$, we have 
$\inf\mathrm{Rot}_F(\omega)=\rho_{F_L}(\omega)$ and $\sup\mathrm{Rot}_F(\omega)=\rho_{F_U}(\omega)$ and likewise on a full measure subset $\Omega'_G\subset\Omega$ we have $\inf\mathrm{Rot}_G(\omega)=\rho_{G_L}(\omega)$ and $\sup\mathrm{Rot}_G(\omega)=\rho_{G_U}(\omega)$.
Hence $\inf\mathrm{Rot}_F(\omega)\leq \inf\mathrm{Rot}_G(\omega)$ and $\sup\mathrm{Rot}_F(\omega)\leq \sup\mathrm{Rot}_G(\omega)$ for all $\omega\in\Omega'_F\cap\Omega'_G$.
Therefore, for almost every $\omega\in\Omega$, we have $\mathrm{Rot}_F(\omega)\preceq\mathrm{Rot}_G(\omega)$.
\end{proof}

The next property concerns the invariance of the random rotation set under cohomological equivalence. We firstly recall the definition of cohomological equivalence used here.

\begin{definition}[Cohomological equivalence]\label{def:cohomequiv}
Given random lifts $F, G\in \widetilde{\mathcal{E}}(\Omega)$ of generators of random circle endomorphisms over a common base system $(\omega,\mathcal{F},\mathbb{P},\sigma)$ and a lift $H\in\widetilde{H}(\Omega)$ of a random circle homeomorphism, we say $F$ and $G$ are \emph{cohomologous via $H$}, written $F\sim_H G$, if 
\begin{align}
H_{\sigma\omega}\circ F_\omega=G_\omega \circ H_\omega \quad \textrm{for a.e. }\omega\in \Omega.
\end{align}
\end{definition}

We now show that if random monotone circle maps over the same base transformation have random lifts that are cohomologically equivalent, then they have the same random rotation number. 

\begin{lemma}(Cohomological invariance for random monotone maps)\label{lem:cohomequiv}
Suppose that $F, G\in \widetilde{\mathcal{M}}(\Omega)$ and $H\in \widetilde{\mathcal{H}}(\Omega)$ satisfy $F\sim_H G$, then
$$
\rho_F(\omega)=\rho_G(\omega), \quad \textrm{for a.e. }\omega\in \Omega.
$$
\end{lemma}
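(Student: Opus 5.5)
The plan is to iterate the cohomology relation along the base orbit and compare displacements at a single point. From $H_{\sigma\omega}\circ F_\omega=G_\omega\circ H_\omega$ for a.e.\ $\omega$, an induction on $n$ gives, on the $\sigma$-invariant full-measure set $\Omega_1=\bigcap_{k\ge0}\sigma^{-k}\{\omega: H_{\sigma\omega}\circ F_\omega=G_\omega\circ H_\omega\}$,
\begin{align*}
H_{\sigma^n\omega}\circ F^{(n)}_\omega=G^{(n)}_\omega\circ H_\omega\qquad\textrm{for all }n\geq1.
\end{align*}
Indeed, if the identity holds for $n$, then
\begin{align*}
H_{\sigma^{n+1}\omega}\circ F^{(n+1)}_\omega = (H_{\sigma^{n+1}\omega}\circ F_{\sigma^n\omega})\circ F^{(n)}_\omega = G_{\sigma^n\omega}\circ H_{\sigma^n\omega}\circ F^{(n)}_\omega = G^{(n+1)}_\omega\circ H_\omega .
\end{align*}

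Next I fix $x\in\mathbb{R}$ and write
\begin{align*}
\frac{G^{(n)}_\omega(H_\omega(x))-H_\omega(x)}{n} = \frac{F^{(n)}_\omega(x)-x}{n} + \frac{\Delta H_{\sigma^n\omega}(F^{(n)}_\omega(x))}{n} + \frac{x-H_\omega(x)}{n}.
\end{align*}
By \eqref{eq:LiLu}, the left side tends to $\rho_G(\omega)$ for a.e.\ $\omega$, since that convergence holds for every initial point, in particular $H_\omega(x)$. The first term on the right tends to $\rho_F(\omega)$ for a.e.\ $\omega$, and the last term tends to $0$ trivially. It therefore remains to show that the middle term tends to $0$. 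Since $|\Delta H_{\sigma^n\omega}(\cdot)|\leq \psi(\sigma^n\omega)$ with $\psi(\omega)\coloneq\|\Delta H_\omega\|\in L^1(\Omega)$, it suffices to show that $\psi(\sigma^n\omega)/n\to0$ almost surely.

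This is the only step needing care, and I would handle it with the Borel--Cantelli argument already used in the proof of Corollary~\ref{cor:realsubint}. For each $\varepsilon>0$, invariance of $\mathbb{P}$ gives
\begin{align*}
\sum_{n\ge1}\mathbb{P}\{\psi\circ\sigma^n>\varepsilon n\}=\sum_{n\ge1}\mathbb{P}\{\psi>\varepsilon n\}\leq \varepsilon^{-1}\int_\Omega\psi\,\mathrm{d}\mathbb{P}<\infty .
\end{align*}
Hence $\psi(\sigma^n\omega)/n\to0$ almost surely, after taking the intersection over $\varepsilon=1/k$.

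An alternative is to use Birkhoff's theorem: the difference of consecutive Birkhoff averages of $\psi$ tends to $0$. Ergodicity is not needed for either argument, and the same reasoning applies for non-ergodic $\sigma$ since \eqref{eq:LiLu} holds regardless. Intersecting the finitely many full-measure sets involved gives $\rho_F(\omega)=\rho_G(\omega)$ for a.e.\ $\omega$.

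The main obstacle is therefore only the sublinear growth of $\|\Delta H_{\sigma^n\omega}\|$. Unlike the deterministic case, this quantity is not uniformly bounded, and this is exactly where the integrability of $H$ is used.
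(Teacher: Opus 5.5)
Your proof is correct and follows essentially the paper's route: iterate the conjugacy to get $H_{\sigma^n\omega}\circ F^{(n)}_\omega=G^{(n)}_\omega\circ H_\omega$, compare the two orbits, and show the error term $\|\Delta H_{\sigma^n\omega}\|/n$ tends to $0$. The paper proves that last step with the Birkhoff-difference argument you list as your alternative, rather than Borel--Cantelli.

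One point should be made explicit. Your initial point $H_\omega(x)$ depends on $\omega$, so you need the null set in \eqref{eq:LiLu} to be independent of the starting point. For monotone degree-one lifts this follows from $|G^{(n)}_\omega(y)-G^{(n)}_\omega(0)|\le |y|+1$, which is exactly the estimate the paper uses for its term $|G^{(n)}_\omega(0)-G^{(n)}_\omega(H_\omega(0))|$.
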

\begin{proof}
Given $\omega\in \Omega, n\in \mathbb{N}$, we have
\begin{align*}
|G^{(n)}_\omega(0)-F^{(n)}_\omega(0)|&=|G^{(n)}_\omega(0)-G^{(n)}_\omega(H_\omega(0))|+|H_{\sigma^n\omega}(F^{(n)}_\omega(0))-
F^{(n)}_\omega(0)|\\
 &\leq |G^{(n)}_\omega(\mathrm{frac}(H_\omega(0))+\lfloor H_{\omega}(0)\rfloor)-G^{(n)}_\omega(0)|+|H_{\sigma^n\omega}(F^{(n)}_\omega(0))-F^{(n)}_\omega(0)|\\
 &\leq |G^{(n)}_\omega(\mathrm{frac}(H_\omega(0)))-G^{(n)}_\omega(0)|+|\lfloor H_{\omega}(0)\rfloor|+|H_{\sigma^n\omega}(F^{(n)}_\omega(0))-F^{(n)}_\omega(0)|\\
 &\leq 1+|\lfloor H_\omega(0)-0\rfloor|+ \lVert \Delta_{H_{\sigma^n\omega}}\rVert \\
 &\leq 1+(1+ \lVert \Delta_{H_\omega}\rVert)+\lVert \Delta_{H_{\sigma^n\omega}}\rVert,
\end{align*}
where $\lfloor\cdot\rfloor$ denotes the floor function. So
\begin{align*}
\frac{1}{n}\lvert G^{(n)}_\omega(0)-F^{(n)}_\omega(0)\rvert &\leq \frac{2}{n}+\frac{\lVert H_\omega \rVert}{n}+\frac{\lVert H_{\sigma^n\omega} \rVert}{n}
\end{align*}
which tends to zero for almost every $\omega\in\Omega$. This is because the final term satisfies
\begin{align*}
\frac{1}{n}\lVert \Delta_{H_{\sigma^n\omega}}\rVert &=\frac{1}{n+1}\sum_{k=0}^{n} \lVert \Delta_{H_{\sigma^k\omega}}\rVert-\frac{n}{n+1}\times \frac{1}{n}\sum^{n-1}_{k=0}\lVert \Delta_{H_{\sigma^k\omega}}\rVert,
\end{align*}
which tends to zero as $n\to\infty$ for almost every $\omega\in\Omega$, by Birkhoff's ergodic theorem. Hence 
$$
\rho_F(\omega)=\lim_{n\rightarrow \infty}\frac{F^{(n)}_\omega(0)}{n}=\lim_{n\rightarrow \infty}\frac{G^{(n)}_\omega(0)}{n}=\rho_G(\omega),\quad  \textrm{for a.e.\ }\omega \in \Omega.
$$
\end{proof}

We obtain the corresponding result for random endomorphisms by applying the previous result to the upper and lower random maps.

\begin{theorem}(Cohomological invariance)\label{thm:cohominvariance}
Suppose that $F, G\in \widetilde{\mathcal{E}}(\Omega)$ are random lifts of generators of random circle endomorphisms over the same base transformation $(\Omega,\mathcal{F},\mathbb{P},\sigma)$ that are cohomologically equivalent via $H\in \widetilde{\mathcal{H}}(\Omega)$: that is, $F\sim_H G$. Then
\begin{align}
\mathrm{Rot}_F(\omega)=\mathrm{Rot}_G(\omega) \quad \textrm{for a.e. }\omega\in \Omega.
\end{align}
\end{theorem}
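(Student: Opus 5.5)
The plan is to reduce everything to the upper and lower random maps and then invoke Lemma~\ref{lem:cohomequiv}. The first step is to show that conjugation by a random lift of a homeomorphism commutes with taking envelopes. Let $H_\omega\in\mathrm{Homeo}_1(\mathbb{R})$ be strictly increasing. For $\Phi\in\mathrm{End}_1(\mathbb{R})$ we have
\begin{align*}
(H'\circ\Phi)_U(x)=\sup_{y\le x}H'(\Phi(y))=H'\Bigl(\sup_{y\le x}\Phi(y)\Bigr)=H'\circ\Phi_U(x),
\end{align*}
using that $H'$ is continuous and increasing. Similarly,
\begin{align*}
(\Phi\circ H)_U(x)=\sup_{y\le x}\Phi(H(y))=\sup_{z\le H(x)}\Phi(z)=\Phi_U\circ H(x),
\end{align*}
because $H$ is an increasing bijection of $\mathbb{R}$. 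The same identities hold for lower envelopes. Applying these with $\Phi=F_\omega$, $H'=H_{\sigma\omega}$, and then with $\Phi=G_\omega$, $H=H_\omega$, the relation $H_{\sigma\omega}\circ F_\omega=G_\omega\circ H_\omega$ becomes
\begin{align*}
H_{\sigma\omega}\circ F_{\omega,U}=G_{\omega,U}\circ H_\omega
\quad\textrm{and}\quad
H_{\sigma\omega}\circ F_{\omega,L}=G_{\omega,L}\circ H_\omega
\end{align*}
for a.e.\ $\omega$. Thus $F_U\sim_H G_U$ and $F_L\sim_H G_L$. These maps lie in $\widetilde{\mathcal{M}}(\Omega)$, as noted before Theorem~\ref{thm:rotset}.

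Next, Lemma~\ref{lem:cohomequiv} gives $\rho_{F_U}=\rho_{G_U}$ and $\rho_{F_L}=\rho_{G_L}$ almost everywhere. To conclude, I would use Theorem~\ref{thm:rotset}: when $\sigma$ is ergodic, $\mathrm{Rot}_F(\omega)=[\rho(F_L),\rho(F_U)]$ and $\mathrm{Rot}_G(\omega)=[\rho(G_L),\rho(G_U)]$ on a full-measure set. The two intervals therefore coincide almost surely.

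The main obstacle is that the statement does not assume ergodicity, while Theorem~\ref{thm:rotset} does. Without ergodicity, I would establish the inclusion $\mathrm{Rot}_F(\omega)\subset[\rho_{F_L}(\omega),\rho_{F_U}(\omega)]$ pointwise, exactly as in the first part of the proof of Theorem~\ref{thm:rotset}. Alternatively, and more robustly, I would compare orbits directly. For any $x$,
\begin{align*}
G^{(n)}_\omega(H_\omega(x))=H_{\sigma^n\omega}(F^{(n)}_\omega(x)),
\end{align*}
and $\|\Delta H_{\sigma^n\omega}\|/n\to0$ almost surely, by the Borel--Cantelli or Birkhoff argument already used in Lemma~\ref{lem:cohomequiv}. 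Hence the displacement averages of $x$ under $F$ and of $H_\omega(x)$ under $G$ differ by $o(1)$, with an error uniform in $x$. So the limsup values coincide, and because $H_\omega$ is a bijection of $\mathbb{R}$ we get $\mathrm{Rot}^+_F(\omega)=\mathrm{Rot}^+_G(\omega)$, hence equal closures, with no ergodicity needed. I would present this direct argument as the main proof. The envelope computation then serves as the explanation that the endpoints also agree.
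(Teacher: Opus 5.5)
Your proposal is correct, and the argument you designate as the main proof differs genuinely from the paper's. The paper's proof is your first paragraph. It pushes the conjugacy through the envelopes to get $F_L\sim_H G_L$ and $F_U\sim_H G_U$, applies Lemma~\ref{lem:cohomequiv} to equate the endpoint rotation numbers, and concludes by Theorem~\ref{thm:rotset}. As you observe, that last step uses ergodicity, which the statement does not assume. The one-sided inclusion $\mathrm{Rot}_F(\omega)\subset[\rho_{F_L}(\omega),\rho_{F_U}(\omega)]$ you mention as a fallback would not close this gap on its own, so making the direct comparison your main proof is the right call.

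The direct argument works. Let $\Omega_1$ be the full-measure set on which $H_{\sigma\omega}\circ F_\omega=G_\omega\circ H_\omega$. Work on the full-measure set $\bigcap_{k\ge 0}\sigma^{-k}\Omega_1$; say this explicitly, since the iterated identity needs the relation along the whole forward orbit. There, induction gives $G^{(n)}_\omega\circ H_\omega=H_{\sigma^n\omega}\circ F^{(n)}_\omega$. The displacement averages of $x$ under $F$ and of $H_\omega(x)$ under $G$ then differ by at most $(\|\Delta H_\omega\|+\|\Delta H_{\sigma^n\omega}\|)/n$, uniformly in $x$. The bound $\sum_n\mathbb{P}(\|\Delta H\|>\varepsilon n)\le\varepsilon^{-1}\int\|\Delta H\|\,\mathrm{d}\mathbb{P}$ uses only invariance of $\mathbb{P}$, so this error tends to $0$ almost surely. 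Since $H_\omega$ is a bijection of $\mathbb{R}$, you obtain $\mathrm{Rot}^+_F(\omega)=\mathrm{Rot}^+_G(\omega)$ before taking closures.

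Compared with the paper, your route avoids the interpolating family, Li--Lu continuity and Theorem~\ref{thm:rotset} entirely, and it holds over an arbitrary measure-preserving base. Because the error is uniform in $x$, it also gives equality of $\mathrm{Rot}^-$, $\mathrm{Rot}^{\mathrm{ord}}$ and $\mathrm{Rot}^{\mathrm{sub}}$. What the paper's route adds is structural: the envelope maps themselves are cohomologous, and $\rho_{F_L}=\rho_{G_L}$, $\rho_{F_U}=\rho_{G_U}$ almost everywhere even without ergodicity, since Lemma~\ref{lem:cohomequiv} does not use it. In the ergodic case this identifies the common interval explicitly.
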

\begin{proof}
Since $F\sim_H G$, we have $H_{\sigma\omega}(F_\omega(x))=G_\omega(H_\omega(x))$ for all $x\in \mathbb{R}$ and almost every $\omega\in \Omega$.
Taking the infimum over values of $y$ greater than or equal to $x$, we have
\begin{align*}
\inf_{y\geq x}H_{\sigma\omega}(F_\omega(y))=\inf_{y\geq x}G_\omega(H_\omega(y)).
\end{align*}
Since $H$ is a lift of a random circle homeomorphism, the map $H_{\sigma\omega}$ is strictly increasing and so we have
\begin{align*}
H_{\sigma\omega}(F_{\omega, L}(x))=H_{\sigma\omega}\left(\inf_{y\geq x} (F_\omega(y)\right)=\inf_{y\geq x}G_\omega(H_\omega(y))=G_{\omega, L}(H_\omega(x)).
\end{align*}
The argument is similar for $F_U$ and $G_U$. Hence we have $F_L \sim_H G_L$ and $F_U\sim_H G_U$. By Lemma~\ref{lem:cohomequiv}, we have for almost every $\omega\in\Omega$ that $\rho_{F_L}(\omega)=\rho_{G_L}(\omega)$ and $\rho_{F_L}(\omega)=\rho_{G_L}(\omega)$. Therefore, by Theorem~\ref{thm:rotset}, $\mathrm{Rot}_F(\omega)=\mathrm{Rot}_G(\omega)$ for almost every $\omega\in \Omega$.
\end{proof}

\section{Random topological entropy for random circle endomorphisms}\label{sec:randomtopo}

In this section, we establish a relationship between the non-triviality of the random rotation set and the topological complexity of the system.

\subsection{Random topological entropy}

Firstly, we briefly recall the metric framework for random topological entropy. Although we use the concept of lifts $F_\omega$ defined on $\mathbb{R}$, dynamical separation is intrinsically measured on the compact quotient space $\mathbb{S}^1$. Let $d_{\mathbb{S}^1}$ denote the standard metric on the circle. For a random circle endomorphism $f \in \mathcal{E}(\Omega)$, the sample-path dependent \emph{Bowen metric over $n \in \mathbb{N}$ steps} is defined for $x, y \in \mathbb{S}^1$ by
$$ 
d_{\omega, n}^f(x, y) \coloneq \max_{0 \le k < n} d_{\mathbb{S}^1}\bigl(f^{(k)}_\omega(x), \, f^{(k)}_\omega(y)\bigr). 
$$
For any precision $\varepsilon > 0$, a subset $E \subset \mathbb{S}^1$ is an \emph{$(n, \varepsilon, \omega)$-separated set} if $d_{\omega, n}^f(x, y) \ge \varepsilon$ for all distinct $x, y \in E$. Letting $s(n, \varepsilon, \omega, f)$ denote the maximum cardinality of such a set, the exponential growth rate at scale $\varepsilon$ is
$$ 
h_\varepsilon(f, \omega) \coloneq \limsup_{n \to \infty} \frac{1}{n} \log s(n, \varepsilon, \omega, f). 
$$
It follows from the standard theory of random topological entropy \cite{Bogenschutz1993, Kifer1986} that the function $\omega \mapsto h_\varepsilon(f, \omega)$ is measurable. Moreover, the cocycle property implies that it is $\sigma$-invariant. If the base system $(\Omega, \mathcal{F}, \mathbb{P}, \sigma)$ is ergodic, then $h_\varepsilon(f, \omega)$ assumes a constant value $\mathbb{P}$-almost surely. The \emph{random topological entropy} of $f$ is then defined as the almost sure limit $h_{\mathrm{top}}^{(\mathrm{r})}(f) \coloneq \lim_{\varepsilon \to 0} h_\varepsilon(f, \omega)$.

We prove that for random monotone circle maps, the random topological entropy is zero.

\begin{proof}[Proof of Proposition~\ref{pro:zeroentropy}]
Fix $\omega \in \Omega$, an integer $n \ge 1$, and precision $0 < \varepsilon < 1/2$. Let $E = \{x_1, x_2, \dots, x_m\} \subset \mathbb{S}^1$ be an $(n, \varepsilon, \omega)$-separated set, indexed in cyclic order around the circle. If $m\leq 1$, the desired estimate is trivial, so we assume $m \geq 2$. Choose lifts $x_1<x_2<\cdots<x_m<x_1+1$  of $x_1, \cdots, x_m$ and set $x_{m+1}:=x_1+1$. These points determine the adjacent arcs $I_i=[x_i, x_{i+1})$ on $\mathbb{S}^1$. 
Take a lift $F\in \widetilde{\mathcal{M}}(\Omega)$ of $f$. Since $F_\omega$ is monotone and degree one for each $\omega\in\Omega$, the same is true of each composition $F^{(j)}_\omega$, $j\geq 1$. For $0\leq j<n$, define the oriented length of the image of $I_i$ at time $j$ by $\ell_j(I_i):=
F_\omega^{(j)}(x_{i+1})-F_\omega^{(j)}(x_i).$ Since $F^{(j)}_\omega$ is monotone increasing and degree one, we have $\ell_j(I_i)\geq 0$ and 

\begin{equation}\label{eq:sumlengths}
\sum_{i=1}^m \ell_j(I_i)=F_\omega^{(j)}(x_1+1)-F_\omega^{(j)}(x_1)=1
\qquad \text{for every } 0 \leq j < n.
\end{equation}

Since $E$ is an $(n, \varepsilon, \omega)$-separated set, any adjacent pair $x_i$ and $x_{i+1}$ must separate by at least $\varepsilon$ at some time before $n$. That is, for each $i \in \{1, \dots, m\}$, there exists $j_i \in \{0, 1, \dots, n-1\}$ such that  
$$
d_{\mathbb{S}^1}\bigl(f_\omega^{(j_i)}(x_i), \, f_\omega^{(j_i)}(x_{i+1})\bigr) \ge \varepsilon.
$$
The circular distance between $f_{\omega}^{(j_i)}(x_i)$ and $f_\omega^{(j_i)}(x_{i+1})$ is bounded above by the oriented length $\ell_{j_i}(I_i)$. Hence $\ell_{j_i}(I_i)\geq \varepsilon$.

Let $N_j \coloneq \#\{i : j_i = j\}$ denote the number of arcs achieving their separation at time step $j$. We have $\sum_{j=0}^{n-1} N_j = m$. For a fixed $j$, equation \eqref{eq:sumlengths} gives 
$\sum_{i:j_i=j}\ell_j(I_i)\leq \sum_{i=1}^m \ell_j(I_i)=1.$ Since each term in the left-hand sum is at least $\varepsilon$, we obtain $N_j\varepsilon\leq 1$. Because each such arc has length $\geq \varepsilon$, we obtain $N_j \cdot \varepsilon \le 1$, which implies $N_j \leq \lfloor 1/\varepsilon \rfloor$.
Summing this bound over all time steps $0 \le j < n$ yields:
$$
m = \sum_{j=0}^{n-1} N_j \le \sum_{j=0}^{n-1} \left\lfloor \frac{1}{\varepsilon} \right\rfloor = n \left\lfloor \frac{1}{\varepsilon} \right\rfloor. 
$$
Since $E$ was an arbitrary separated set, the maximal cardinality satisfies $s(n, \varepsilon, \omega, f) \le n \lfloor 1/\varepsilon \rfloor$. Evaluating the exponential growth rate yields:
$$ 
h_\varepsilon(f, \omega) = \limsup_{n \to \infty} \frac{1}{n} \log s(n, \varepsilon, \omega, f) \le \limsup_{n \to \infty} \frac{\log n + \log\lfloor 1/\varepsilon \rfloor}{n} = 0. 
$$
On the other hand, $h_\varepsilon(f,\omega)\geq 0$. Hence $h_\varepsilon(f,\omega)=0$ for every $0<\varepsilon<1/2$. Letting $\varepsilon\downarrow 0$, we obtain $h_{\mathrm{top}}^{\mathrm{(r)}}(f, \omega) = 0$.
\end{proof}

\subsection{Proof of Theorem~\ref{thm:positivetopentropy}}
We now prove that a non-degenerate random rotation interval forces positive random topological entropy. The proof passes to a finite cover of the circle, where the positive length of the random rotation set yields full-cover time blocks along a positive-density set of sample times. We first show that passing to this finite cover does not change the random topological entropy.

\begin{lemma}(Finite covers preserve random topological entropy)\label{lem:finitecover}
Let $\hat{\pi}: T^1_4=\mathbb{R}/4\mathbb{Z}\to \mathbb{S}^1=\mathbb{R}/\mathbb{Z}$ be the natural covering map. Let $f$ be a random circle endomorphism and let $\hat{f}$ be the map induced by the lift $F$ of $f$ on $T^1_4$. Then
$$
h^{(\mathrm{r})}_{\mathrm{top}}(\hat{f})=h^{(\mathrm{r})}_{\mathrm{top}}(f).
$$
\end{lemma}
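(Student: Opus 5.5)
We compare $(n,\varepsilon,\omega)$-separated sets for $f$ and for $\hat f$ directly, fibre by fibre, and show that the maximal cardinalities differ by at most a multiplicative constant, possibly after a bounded change of scale. Taking $\frac1n\log$, the $\limsup$ and then $\varepsilon\to0$ will give equality of the two random entropies. Here $\hat f_\omega$ is the map of $T^1_4=\mathbb{R}/4\mathbb{Z}$ induced by $F_\omega$, which is well defined because $F_\omega(x+4)=F_\omega(x)+4$. Then $\hat\pi\circ\hat f^{(k)}_\omega=f^{(k)}_\omega\circ\hat\pi$ for all $k$, since $\hat f^{(k)}_\omega$ is induced by $F^{(k)}_\omega$. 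We use the metric $d_{T^1_4}$ inherited from $\mathbb{R}$, so that $\hat\pi$ is a local isometry. More precisely, $d_{\mathbb{S}^1}(\hat\pi u,\hat\pi v)\le d_{T^1_4}(u,v)$, with equality when $d_{T^1_4}(u,v)\le 1/2$.

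\textbf{Lower bound $h(\hat f)\ge h(f)$.} Let $E\subset\mathbb{S}^1$ be $(n,\varepsilon,\omega)$-separated for $f$ with $\varepsilon<1/2$. Choose one preimage $\hat x\in\hat\pi^{-1}(x)$ for each $x\in E$, giving a set $\hat E$. For distinct $x,y\in E$ there is some $k<n$ with $d_{\mathbb{S}^1}(f^{(k)}_\omega x,f^{(k)}_\omega y)\ge\varepsilon$. Because $\hat\pi$ is $1$-Lipschitz, $d_{T^1_4}(\hat f^{(k)}_\omega\hat x,\hat f^{(k)}_\omega\hat y)\ge\varepsilon$. Hence $\hat E$ is separated for $\hat f$, and $s(n,\varepsilon,\omega,\hat f)\ge s(n,\varepsilon,\omega,f)$.

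\textbf{Upper bound $h(\hat f)\le h(f)$.} Let $\hat E\subset T^1_4$ be $(n,\varepsilon,\omega)$-separated for $\hat f$, with $\varepsilon<1/2$. Split $\hat E$ into the four pieces lying in the fundamental arcs $[j,j+1)$, $j=0,1,2,3$. One piece $\hat E_j$ has cardinality at least $|\hat E|/4$. We claim that $\hat\pi$ is injective on $\hat E_j$ and that $\hat\pi(\hat E_j)$ is $(n,\varepsilon,\omega)$-separated for $f$. Injectivity is clear. Now take distinct $u,v\in\hat E_j$ and let $\tilde u,\tilde v\in[j,j+1)$ be their lifts in $\mathbb{R}$.

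If $d_{\mathbb{S}^1}(f^{(k)}_\omega\hat\pi u,f^{(k)}_\omega\hat\pi v)<\varepsilon$ for all $k<n$, we argue by induction on $k$ that $|F^{(k)}_\omega\tilde u-F^{(k)}_\omega\tilde v-m|<\varepsilon$ for one fixed integer $m$. Indeed, the difference $F^{(k)}_\omega\tilde u-F^{(k)}_\omega\tilde v$ can only change integer class by a jump. However, $F^{(k)}_\omega\tilde u-F^{(k)}_\omega\tilde v$ is not continuous in $k$, so this step is where care is needed.

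The cleaner route, which we would take first, is a connectedness argument. Join $\tilde u$ to $\tilde v$ by the segment $J=[\tilde u,\tilde v]$ of length less than $1$. A standard alternative is to use the known fact that for a finite-to-one covering commuting with the dynamics, the Bowen balls of radius $\varepsilon<1/8$ lift homeomorphically (see \cite{Bogenschutz1993}): each Bowen ball for $f$ at scale $\varepsilon$ lifts to at most $4$ disjoint pieces, each of which is contained in a Bowen ball for $\hat f$ at scale $\varepsilon$. We would make this precise with spanning sets. If $\Sigma\subset\mathbb{S}^1$ is $(n,\varepsilon,\omega)$-spanning for $f$, we show that $\hat\pi^{-1}(\Sigma)$, which has $4|\Sigma|$ points, is $(n,2\varepsilon,\omega)$-spanning for $\hat f$. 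The key step is to pick, for $z\in T^1_4$, a point $y\in\Sigma$ that is $\varepsilon$-close to $\hat\pi z$ along the first $n$ iterates, and then to choose the preimage $\hat y$ adjacent to $z$ at time $0$. We then propagate closeness in time via uniform continuity of the single fibre map $F_{\sigma^k\omega}$ at scale $\varepsilon$. If $\hat f^{(k)}z$ and $\hat f^{(k)}\hat y$ are within $\delta$, their images are within the modulus of $F_{\sigma^k\omega}$, which is less than $1/2$ for $\delta$ small. Since the images also project $\varepsilon$-close, they are $\varepsilon$-close in $T^1_4$.

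This step is the main obstacle, because the modulus of continuity depends on $\sigma^k\omega$ and is not uniform. We would handle it by choosing $\delta_k=\delta(\sigma^k\omega)$ and running the spanning or separated comparison with the Bowen metric defined by these varying scales. This is a standard device in \cite{Kifer1986}, and the entropy is unchanged in the limit. Comparing spanning and separated numbers as usual then gives $s(n,\varepsilon',\omega,\hat f)\le 4\,s(n,\varepsilon,\omega,f)$. Since the factor $4$ disappears under $\frac1n\log$, letting $\varepsilon\to0$ yields equality.
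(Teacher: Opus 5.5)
Your lower bound $h^{(\mathrm{r})}_{\mathrm{top}}(\hat f)\ge h^{(\mathrm{r})}_{\mathrm{top}}(f)$, via the $1$-Lipschitz property of $\hat\pi$, is correct and matches the easy half of the paper's argument. The reverse inequality is not established. It is also the direction the paper actually uses in the proof of Theorem~\ref{thm:positivetopentropy}, to pass from $h(\hat f)>0$ to $h(f)>0$.

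Your first route fails as stated. Already for $f=\mathrm{id}$, the points $0.01,0.99\in[0,1)$ are $(n,0.1,\omega)$-separated for $\hat f$ but project to points at distance $0.02$: $\hat f$-separation can come entirely from a difference of sheets.

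Your spanning-set route is the paper's route (``each $f$-Bowen ball lifts to at most four $\hat f$-Bowen balls''). It needs the relative sheet of $\hat f^{(k)}_\omega z$ and $\hat f^{(k)}_\omega \hat y$ to stay constant for all $k<n$. Your one-step argument gives this only when $\theta_{\sigma^k\omega}(\varepsilon)<1/2$, where $\theta_\eta(t):=\sup_{|a-b|\le t}|F_\eta(a)-F_\eta(b)|$. The hypotheses allow $\mathbb{P}\{\eta:\theta_\eta(\varepsilon)\ge 1/2\}>0$ for every $\varepsilon>0$, and then such times occur with positive frequency along a.e.\ orbit. So nothing bounds the number of sheet patterns in one $f$-Bowen ball by $4$; the paper's proof asserts the lifting without addressing this either.

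You identify the obstacle, but the varying-scale repair is not valid. Integrability of $\|\Delta F\|$ gives no control of $\log(1/\delta(\eta))$, and varying-scale separated numbers can grow much faster than fixed-scale ones. For example, over a Bernoulli base take i.i.d.\ degree-one homeomorphisms mapping an arc of length $\delta_0(\eta)$ onto an arc of length $1/2$, with $\mathbb{E}\log(1/\delta_0)=\infty$. Pulling back a grid of mesh $\delta_0(\sigma^k\omega)$ by $f^{(k)}_\omega$ gives varying-scale separated sets of size at least $\max_{k<n}\lfloor 1/\delta_0(\sigma^k\omega)\rfloor$. By Borel--Cantelli their exponential growth rate is a.s.\ infinite, while $h^{(\mathrm{r})}_{\mathrm{top}}(f)=0$ by Proposition~\ref{pro:zeroentropy}. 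So the varying-scale bound can be vacuous, and the claimed $s(n,\varepsilon',\omega,\hat f)\le 4\,s(n,\varepsilon,\omega,f)$ is not obtained.

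The missing idea is to keep the scale fixed and count sheet changes, which can only occur at bad times.
\begin{itemize}
\item Fix $\varepsilon\le 1/4$ and $0<\delta<\varepsilon/2$. Call $k$ bad if $\theta_{\sigma^k\omega}(\delta)\ge 1/2$, and let $B_n(\omega)$ be the number of bad $k<n$.
\item Let $\Sigma$ be an $(n,\delta,\omega)$-spanning set for $f$ with $|\Sigma|\le s(n,\delta,\omega,f)$, and fix a lift $\hat y\in T^1_4$ of each $y\in\Sigma$.
\item Label each point $z$ of an $(n,\varepsilon,\omega)$-separated set $\hat E$ for $\hat f$ by a $y\in\Sigma$ that $\delta$-shadows $\hat\pi z$, together with the unique $j_k\in\mathbb{Z}/4$ satisfying $d_{T^1_4}(\hat f^{(k)}_\omega z,\hat f^{(k)}_\omega\hat y+j_k)<\delta$ for $k<n$.
\item Equal labels force $d_{T^1_4}$-distance $<2\delta<\varepsilon$ at all times, so the labelling is injective on $\hat E$.
\item Your one-step argument gives $j_{k+1}=j_k$ at every good $k$.
\end{itemize}
Hence
\[
s(n,\varepsilon,\omega,\hat f)\le 4^{1+B_n(\omega)}\,s(n,\delta,\omega,f).
\]
By Birkhoff's theorem, $B_n(\omega)/n\to p(\delta):=\mathbb{P}\{\eta:\theta_\eta(\delta)\ge 1/2\}$ for a.e.\ $\omega$. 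Moreover $p(\delta)\to 0$ as $\delta\to 0$, because each $F_\eta$ is uniformly continuous. Therefore $h_\varepsilon(\hat f,\omega)\le h_\delta(f,\omega)+p(\delta)\log 4$. Letting $\delta\to 0$ along a sequence and then $\varepsilon\to 0$ gives $h^{(\mathrm{r})}_{\mathrm{top}}(\hat f)\le h^{(\mathrm{r})}_{\mathrm{top}}(f)$.

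At every fixed scale the covering factor is exponential in $n$, not $4$. It only becomes negligible because the $f$-scale $\delta$ goes to $0$ while the $\hat f$-scale $\varepsilon$ is held fixed.
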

\begin{proof}
The argument is the random analogue of the finite-cover invariance used by \cite{Glendinningetal2009}. Fix $\omega\in \Omega$. Let $d$ and $\hat{d}$ be compatible metrics on $\mathbb{S}^1$ and $T^1_4$, and let $d_{\omega, n}$, $\hat{d}_{\omega, n}$ be the associated sample Bowen metrics. 

Since $\hat{\pi}$ is a four-fold covering, there exists $\varepsilon_0$, the restriction of $\hat{\pi}$ to any $\hat d$-ball of radius $\varepsilon$ is an isometry onto its image. Hence the projection of an $(\hat f,\omega,n,\varepsilon)$-ball is contained in an
$(f,\omega,n,\varepsilon)$-ball, and therefore $R_f(\omega,n,\varepsilon)\le R_{\hat f}(\omega,n,\varepsilon)$.
Conversely, each $(f, \omega, n, \epsilon)$-ball lifts to at most four $(\hat{f}, \omega, n, \epsilon)$-balls, because $\hat{\pi}$ is a four-fold covering. Therefore
$R_{\hat{f}}(\omega, n, \epsilon)\leq 4 R_f(\omega, n, \epsilon)$.
Thus, for all sufficiently small $\varepsilon$ and each $n\in\mathbb{N}$, $R_f(\omega,n,\varepsilon)\le R_{\hat f}(\omega,n,\varepsilon)\le 4\,R_f(\omega,n,\varepsilon)$.
Dividing by $n$ and taking the logarithm, we note that the multiplicative factor $4$ disappears when taking the limit superior. Thus $h_\varepsilon(\hat f,\omega)=h_\varepsilon(f,\omega)$.
Finally, letting $\varepsilon\to 0$ yields $h^{(\mathrm{r})}_{\mathrm{top}}(\hat{f}, \omega)=h^{(\mathrm{r})}_{\mathrm{top}}(f, \omega)$ for almost every $\omega\in\Omega$.
Hence, $h^{(\mathrm{r})}_{\mathrm{top}}(\hat{f})=h^{(\mathrm{r})}_{\mathrm{top}}(f)$.
\end{proof}

We now prove Theorem \ref{thm:positivetopentropy}.

\begin{proof}[Proof of Theorem \ref{thm:positivetopentropy}]
Let $\delta\coloneq \rho(F_U)-\rho(F_L)>0$. We define $D_n(\omega)\coloneq\operatorname{diam} (F^{(n)}_\omega([0, 1]))=F^{(n)}_{\omega, U}(1)-F^{(n)}_{\omega, L}(0)=F^{(n)}_{\omega, U}(0)+1-F^{(n)}_{\omega, L}(0)$. Since $F_L,F_U\in\widetilde{\mathcal{M}}(\Omega)$, by \eqref{eq:LiLu} the lower and upper random maps satisfy
$$
\lim_{n\to\infty}\frac{F_{\omega,L}^{(n)}(0)}{n}
=\rho(F_L)
\quad\textrm{and}\quad
\lim_{n\to\infty}\frac{F_{\omega,U}^{(n)}(0)}{n}
=\rho(F_U)\quad \textrm{for a.e. }\omega\in \Omega.
$$
So
$$
\frac{D_n(\omega)}{n}=\frac{F^{(n)}_{\omega, U}(0)}{n}-\frac{F^{(n)}_{\omega, L}(0)}{n}+\frac{1}{n}\to \rho(F_U)-\rho(F_L)=\delta>0\quad \textrm{for a.e.\ }\omega\in \Omega.
$$
Hence, there exists $N_*(\omega)$ such that when $n\geq N_*(\omega)$, $D_n(\omega)>4$. Equivalently, 
$\mathbf{1}_{\{D_n>4\}}(\omega)\to 1$ for almost every $\omega\in \Omega$. By dominated convergence, $\mathbb{P}(D_n>4)\to 1$.
Thus there exists a fixed integer $N\in \mathbb{N}$ such that the set
$A\coloneq A_N\coloneq\{\omega\in \Omega: D_N(\omega)>4\}$ satisfies $\mathbb{P}(A)>0$.

Let $T^1_4\coloneq \mathbb{R}/4\mathbb{Z}$ and let $\hat{f}$ denote the random circle endomorphism on $T^1_4$ induced by the random lift $F$.
Let $I_j \coloneq [j-1, j] \subset T^1_4$, for $j=1,2,3,4$. If $\omega \in A$, then $D_N(\omega)>4$, and so $F^{(N)}_\omega([0, 1])\subset \mathbb{R}$ is a connected interval of length exceeding 4. Hence its image modulo 4 is the whole of $T^1_4$. Since $F^{(N)}_\omega$ is a degree one map, 
$F^{(N)}_\omega([j-1, j])=F^{(N)}_\omega([0, 1])+(j-1)$, and therefore
$$
\hat{f}^{(N)}_\omega(I_j)=T^1_4, \quad \textrm{for } j=1, 2, 3, 4 \textrm{ and every }\omega\in A.
$$
In particular, this holds for the two disjoint intervals
$I_1=[0, 1]$ and $I_3=[2, 3]$, whose distance in $T^1_4$ is $\eta_0\coloneq\operatorname{dist}(I_1, I_3)>0$.
By Birkhoff's ergodic theorem applied to the indicator function $\mathbf{1}_A$, there exists a full-measure $\sigma$-invariant set $\Omega_0\subset \Omega$ such that for every $\omega\in \Omega_0$,
$$
\frac{D_n(\omega)}{n}\to \delta, \quad\textrm{and}\quad \frac{1}{m}\sum^{m-1}_{j=0}\mathbf{1}_A(\sigma^j \omega)\to \mathbb{P}(A).
$$
Hence, for every $\omega\in \Omega_0$ the set $T(\omega)\coloneq\{j\geq 0: \sigma^j\omega \in A\}$ satisfies
$$
\lim_{m\to \infty}\frac{\#(T(\omega)\cap [0, m-1])}{m}=\mathbb{P}(A)>0.
$$
Fix $\omega\in \Omega_0$. Since $T(\omega)$ has positive density, it is non-empty. Moreover, since $h_\epsilon(\hat{f}, \omega)$ is $\sigma$-invariant for each $\epsilon>0$, we may replace $\omega$ by a forward iterate if necessary. Thus, without loss of generality, we assume that $\omega\in A$, so that $0\in T(\omega)$.
Now recursively define a sparse subsequence $0=s_0(\omega)< s_1(\omega)<s_2(\omega)< \cdots$ by taking $s_0(\omega)=0$. When $s_k(\omega)$ is defined, let $s_{k+1}(\omega)$ be the smallest element of $T(\omega)$ such that $s_{k+1}(\omega)\geq s_k(\omega)+N$.
Thus the intervals $[s_k(\omega), s_k(\omega)+N)$ are pairwise disjoint.

If $s_k(\omega)\leq n <s_{k+1}(\omega)$ and $n\in T(\omega)$, then by minimality of $s_{k+1}(\omega)$ we must have $n<s_k(\omega)+N$. Therefore, for every $R>0$,
$$
T(\omega)\cap [0, R)\ \subset \bigcup_{k: s_k(\omega)<R}\left(\, [s_k(\omega), s_k(\omega)+N)\cap\mathbb{Z}\,\right).
$$
Since each half-open interval $[s_k(\omega), s_k(\omega)+N)$ contains at most $N$ integers, it follows that
$$
\#(T(\omega)\cap [0, R))\leq N\cdot \#\{k: s_k(\omega)<R\}.
$$
Dividing by $R$ and taking the limit inferior, we obtain
$$
\liminf_{R\to \infty}\frac{\#\{k:s_k(\omega)<R\}}{R}\geq \frac{\mathbb{P}(A)}{N}>0.
$$
For each $k\geq 0$, we define the block map
$$
\hat{g}_{k, \omega}\coloneq\hat{f}^{s_{k+1}(\omega)-s_k(\omega)}_{\sigma^{s_k(\omega)}\omega}=\hat{f}^{(\ell_k)}_{\sigma^{s_k(\omega)+N}\omega} \circ\hat{f}^{(N)}_{\sigma^{s_k(\omega)}\omega},
$$
where $\ell_k \coloneq s_{k+1}(\omega)-s_k(\omega)-N\geq 0$. Since $\sigma^{s_k(\omega)}\omega\in A$, we have
$\hat{f}^{(N)}_{\sigma^{s_k(\omega)}\omega}(I_j)=T^1_4$ for $j=1,2,3,4$.
All compositions of fibre maps are degree one circle endomorphisms, and so are surjective on $T^1_4$. Thus
$$
\hat{g}_{k,\omega}(I_j)=T^1_4, \quad \textrm{for } j=1, 3 \textrm{ and all }k\geq 0.
$$
Fix $q\geq 0$ and a word $\alpha=(\alpha_0, \ldots, \alpha_q)\in \{1, 3\}^{q+1}$. Define compact sets recursively by $C_q(\alpha, \omega) \coloneq I_{\alpha_q}$, and for $r=q-1, q-2, \ldots, 0$,
$$
C_r(\alpha, \omega)\coloneq I_{\alpha_r}\cap \hat{g}^{-1}_{r, \omega}(C_{r+1}).
$$
Since $\hat{g}_{r, \omega}(I_{\alpha_r})=T^1_4$, each $C_r$ is a non-empty compact subset of $I_{\alpha_r}$. In particular,
$K_\alpha(\omega)\coloneq C_0(\omega)\neq \emptyset$. Choose a point $x_\alpha\in K_\alpha(\omega)$ for each word $\alpha\in \{1, 3\}^{q+1}$.
If $\alpha \neq \beta$, let $r$ be the first index where they differ. Then $\hat{f}_\omega^{(s_r(\omega))}(x_\alpha)\in I_{\alpha_r}$ and $\hat{f}_\omega^{(s_r(\omega))}(x_\beta)\in I_{\beta_r}$.
Since $\{\alpha_r, \beta_r\}=\{1, 3\}$, these images lie in $I_1$ and $I_3$ respectively. So
$\hat{d}_{\omega, s_q(\omega)+1}(x_\alpha, x_\beta)\geq \eta_0$.
Hence the set $E_q(\omega)\coloneq\{x_\alpha: \alpha\in \{1, 3\}^{q+1}\}$ is $(s_q(\omega)+1, \eta_0, \omega)$-separated and has cardinality $2^{q+1}$. Therefore the maximum cardinality of a $(s_q(\omega)+1, \eta_0,\omega)$-separated set for $\hat{f}$ satisfies
\begin{equation}\label{eq:sfbound}
s(\omega, s_q(\omega)+1, \eta_0,\hat{f})\geq 2^{q+1}.
\end{equation}
Now, we set $N_\omega(R)\coloneq\#\{k: s_k(\omega)<R\}$. Since $N_\omega(s_q(\omega)+1)=q+1$, the density estimate gives
$$\liminf_{q\to \infty}\frac{q+1}{s_q(\omega)+1}=\liminf_{q\to \infty}\frac{N_\omega(s_q(\omega)+1)}{s_q(\omega)+1} \geq \frac{\mathbb{P}(A)}{N}.$$
Together with \eqref{eq:sfbound}, we have
$$h_{\mathrm{top}}^{\mathrm{(r)}}(\hat f,\omega)
\ge
\limsup_{q\to\infty}
\frac{1}{s_q(\omega)+1}\log 2^{q+1}
=
\log 2\cdot
\limsup_{q\to\infty}\frac{q+1}{s_q(\omega)+1}.$$
Hence 
$$h_{\mathrm{top}}^{\mathrm{(r)}}(\hat f,\omega)\geq \log 2\cdot
\limsup_{q\to\infty}\frac{q+1}{s_q(\omega)+1}\geq \log 2\cdot
\liminf_{q\to\infty}\frac{q+1}{s_q(\omega)+1} \geq \frac{\mathbb P(A)}{N} \log 2>0.$$
By Lemma~\ref{lem:finitecover}, we have $h_{\mathrm{top}}^{\mathrm{(r)}}(f,\omega) =h_{\mathrm{top}}^{\mathrm{(r)}}(\hat f,\omega)$ for almost every $\omega\in \Omega$. Hence, $h^{(\mathrm{r})}_{\mathrm{top}}(f, \omega)>0$ for almost every $\omega\in \Omega$,
and since $\sigma$ is ergodic, the random topological entropy is almost surely constant. Therefore,
$h^{(\mathrm{r})}_{\mathrm{top}}(f)>0$.
\end{proof}

\begin{remark}
This above argument should be compared with the entropy proof in the uniquely ergodically-forced setting, where the construction of a fixed finite-time full-cover follows from the topological and uniquely ergodic structure. In the random setting, the full cover is obtained only on a set $A\subset \Omega$ of positive measure. The ergodicity of the base is then used to recover a positive density family of such blocks along almost every sample path.
\end{remark}

\begin{example}[Calculation of random topological entropy]\label{ex:calcentropy}
For the Bernoulli random circle endomorphism of Example \ref{ex:Bernoulli}, we have $h_{\mathrm{top}}^{\mathrm{(r)}}(f,\omega)=\log 3$ for every $\omega\in \Omega$. In particular, $h_{\mathrm{top}}^{\mathrm{(r)}}(f)=\log 3$.

\begin{proof}
Let $I_0=[0,1/3)$, $I_1=[1/3,2/3)$ and $I_2=[2/3,1)$. Then for each $\xi\in\{+1,-1\}$ and each $j\in\{0,1,2\}$, the restriction $f_\xi|_{I_j}: I_j \to \mathbb{S}^1$ is an affine homeomorphism onto the whole circle, and its slope has absolute value $3$.

We use Bowen metrics $d^f_{\omega, n}$ and numbers $s(n, \varepsilon, \omega, f)$ introduced at the beginning of this section.

We first prove the upper bound. 
Fix $0<\varepsilon<1/2$ and choose an integer $m_\varepsilon\ge 1$ such that $3^{-m_\varepsilon}<\varepsilon$. Let $n\ge 1$ and $\omega\in\Omega$. For any word
$a=(a_0,\dots,a_{n+m_\varepsilon-1})\in\{0,1,2\}^{\,n+m_\varepsilon}$, define the corresponding cylinder interval by
$$
C_a(\omega)\coloneq\bigcap_{k=0}^{n+m_\varepsilon-1}\bigl(f_\omega^{(k)}\bigr)^{-1}(I_{a_k}).
$$
Since each fibre map has exactly three full affine branches with respect to the common partition $\{I_0, I_1, I_1\}$, the level-$(n+m_\varepsilon)$ cylinders are in one to one correspondence with the words in $\{0, 1, 2\}^{n+m_\varepsilon}$, and each $C_a(\omega)$ is a non-empty interval. Moreover, each inverse branch contracts by the factor $1/3$, therefore
$\operatorname{diam}C_a(\omega)=3^{-(n+m_\varepsilon)}$.
For every $0\le k<n$, the image of $f^{(k)}_\omega(C_a(\omega))$ is an interval and 
$$
\operatorname{diam}(f_\omega^{(k)}(C_a(\omega)))=3^{-(n+m_\varepsilon-k)}\leq 3^{-m_\varepsilon}<\varepsilon.
$$

Hence every cylinder $C_a(\omega)$ has $(n, \varepsilon, \omega)$-Bowen diameter strictly smaller than $\varepsilon$. Therefore any $(n,\varepsilon,\omega)$-separated set contains at most one point in each cylinder. Since the number of cylinders is $3^{n+m_\varepsilon}$, we obtain $s(n,\varepsilon,\omega,f)\leq 3^{n+m_\varepsilon}$. Consequently,
$$
\limsup_{n\to\infty}\frac{1}{n}\log s(n,\varepsilon,\omega,f)\le \log 3.
$$
Since $\varepsilon>0$ was arbitrary, this yields
$h_{\mathrm{top}}^{\mathrm{(r)}}(f,\omega)\le \log 3$.

We now prove the lower bound. Choose compact intervals $J_0\subset I_0$, $J_1\subset I_1$ and  $J_2\subset I_2$ with pairwise positive distance, and let $\eta_0\coloneq\min_{i\neq j}\operatorname{dist}(J_i,J_j)>0$.
Fix $\omega\in\Omega$ and $n\geq 1$. For each word $a=(a_0,\dots,a_{n-1})\in\{0,1,2\}^{\,n}$, we define non-empty compact intervals $K_r(a, \omega)$ for $0\leq r<n-1$ as
$K_{n-1}(a,\omega)\coloneq J_{a_{n-1}}$. Let $K_r(a,\omega)$ be the unique compact interval contained in $I_{a_r}$ such that $f_{\sigma^r\omega}\bigl(K_r(a,\omega)\bigr)=K_{r+1}(a,\omega)$.
This is well-defined because $f_{\sigma^r\omega}|_{I_{a_r}}: I_{a_r}\mapsto \mathbb{S}^1$ is an affine homeomorphism from $I_{a_r}$ onto $\mathbb{S}^1$. In particular, $K_0(a,\omega)\neq\emptyset$. Choose one point $x_a\in K_0(a,\omega)$ for each word $a\in\{0,1,2\}^{\,n}$.

Now let $a\neq b$, and let $r$ be the first index at which they differ. Then
$$
f_\omega^{(r)}(x_a)\in K_r(a,\omega)\subset J_{a_r}
\quad\textrm{and}\quad
f_\omega^{(r)}(x_b)\in K_r(b,\omega)\subset J_{b_r}.
$$
Since $a_r\neq b_r$, the two points lie in two distinct intervals among $J_0,J_1,J_2$, and hence $d^f_{\omega,n}(x_a,x_b)\ge \eta_0$. Therefore the set $E_n(\omega)\coloneq\{x_a: a\in\{0,1,2\}^{\,n}\}
$ is $(n,\eta_0,\omega)$-separated and has cardinality $\#E_n(\omega)=3^n$. Thus $s(n,\eta_0,\omega,f)\ge 3^n$, which implies
$$
\limsup_{n\to\infty}\frac{1}{n}\log s(n,\eta_0,\omega,f)\ge \log 3.
$$
Hence $h_{\mathrm{top}}^{\mathrm{(r)}}(f,\omega)\ge \log 3$.
Combining the upper and lower bounds, we conclude that for every $\omega\in\Omega$,
$h_{\mathrm{top}}^{\mathrm{(r)}}(f,\omega)=\log 3$. In particular,
$h_{\mathrm{top}}^{\mathrm{(r)}}(f)=\log 3$.
\end{proof}
\end{example}

We conclude this section by demonstrating the following result about the cohomological invariance of random topological entropy.

\begin{proposition}(Cohomological invariance of random topological entropy)\label{pro:cohominventropy}
Consider random circle endomorphisms over a measure-preserving dynamical system $(\Omega, \mathcal{F}, \mathbb{P}, \sigma)$, generated by $f,g\in\mathcal{E}(\Omega)$. Suppose that there exists a measurable family of circle homeomorphisms
$h:\Omega\times \mathbb{S}^1\to \mathbb{S}^1$, where $(\omega, x)\mapsto h_\omega(x)$, and a $\sigma$-invariant full measure set $\Omega_0\subset \Omega$ such that
$$
g_\omega=h_{\sigma\omega}\circ f_\omega \circ h^{-1}_{\omega} \quad \textrm{for all }\omega\in \Omega_0.
$$
Assume that the families $\{h_\omega\}_{\omega\in \Omega_0}$ and $\{h^{-1}_\omega\}_{\omega\in \Omega_0}$ are uniformly equicontinuous on $\mathbb{S}^1$. Then, for every $\omega\in \Omega_0$,
$h^{\mathrm{(r)}}_{\mathrm{top}}(f, \omega)=h^{\mathrm{(r)}}_{\mathrm{top}}(g, \omega)$.
In particular, if the random topological entropy is almost surely constant, then
$$
h^{\mathrm{(r)}}_{\mathrm{top}}(f)=h^{\mathrm{(r)}}_{\mathrm{top}}(g).
$$
\end{proposition}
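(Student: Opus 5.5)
The plan is the standard conjugacy argument for topological entropy: show that the conjugating family $h$ maps $(n,\varepsilon,\omega)$-separated sets for $f$ to $(n,\varepsilon',\omega)$-separated sets for $g$, with $\varepsilon'$ depending only on $\varepsilon$ (and not on $n$ or $\omega$), and then run the same argument symmetrically with $h^{-1}$.

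The first step is to iterate the conjugacy relation along the base orbit. For $\omega\in\Omega_0$, the $\sigma$-invariance of $\Omega_0$ gives $\sigma^k\omega\in\Omega_0$ for all $k\ge0$. Hence the intermediate factors $h_{\sigma^k\omega}^{-1}\circ h_{\sigma^k\omega}$ telescope, and induction on $k$ yields
$$
g^{(k)}_\omega = h_{\sigma^k\omega}\circ f^{(k)}_\omega\circ h_\omega^{-1}\qquad\textrm{for all }k\ge 0.
$$

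The second step transfers separated sets. Let $E$ be an $(n,\varepsilon,\omega)$-separated set for $f$, and put $E'\coloneq h_\omega(E)$. For distinct $x,y\in E$ there is some $k<n$ with $d_{\mathbb{S}^1}(f^{(k)}_\omega x,f^{(k)}_\omega y)\ge\varepsilon$. By the identity above,
$$
f^{(k)}_\omega x = h_{\sigma^k\omega}^{-1}\bigl(g^{(k)}_\omega(h_\omega x)\bigr),
$$
and similarly for $y$. Uniform equicontinuity of $\{h^{-1}_\eta\}_{\eta\in\Omega_0}$ supplies a $\delta(\varepsilon)>0$ such that $d(u,v)<\delta$ implies $d(h^{-1}_\eta u,h^{-1}_\eta v)<\varepsilon$ for every $\eta\in\Omega_0$. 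Since $\sigma^k\omega\in\Omega_0$, the contrapositive gives $d_{\mathbb{S}^1}(g^{(k)}_\omega h_\omega x,\,g^{(k)}_\omega h_\omega y)\ge\delta(\varepsilon)$. Because $h_\omega$ is injective, $E'$ is $(n,\delta(\varepsilon),\omega)$-separated for $g$ and has the same cardinality as $E$. Therefore
$$
s(n,\varepsilon,\omega,f)\le s(n,\delta(\varepsilon),\omega,g),\qquad h_\varepsilon(f,\omega)\le h_{\delta(\varepsilon)}(g,\omega).
$$

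The third step removes the scale. We may take $\delta(\varepsilon)\le\varepsilon$ and $\delta(\varepsilon)\to0$ as $\varepsilon\to0$. Since $\varepsilon\mapsto h_\varepsilon$ is non-increasing, letting $\varepsilon\to0$ gives $h^{(\mathrm r)}_{\mathrm{top}}(f,\omega)\le h^{(\mathrm r)}_{\mathrm{top}}(g,\omega)$, where both quantities are understood as the limits $\lim_{\varepsilon\to0}h_\varepsilon(\cdot,\omega)$, which exist by monotonicity. For the reverse inequality, note that $f_\omega=h_{\sigma\omega}^{-1}\circ g_\omega\circ h_\omega$, so the roles of $f$ and $g$ swap with $h$ replaced by $h^{-1}$. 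Running the same argument with the equicontinuity of $\{h_\eta\}$ gives $h^{(\mathrm r)}_{\mathrm{top}}(g,\omega)\le h^{(\mathrm r)}_{\mathrm{top}}(f,\omega)$ for every $\omega\in\Omega_0$. The almost-sure statement then follows because $\Omega_0$ has full measure.

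The step needing most care is the uniformity in the second step. The modulus $\delta(\varepsilon)$ must not depend on $\sigma^k\omega$, otherwise the scale would degenerate as $k$ ranges over $[0,n)$. Uniform equicontinuity over $\Omega_0$, combined with the $\sigma$-invariance of $\Omega_0$, is exactly what prevents this, so the argument goes through provided both hypotheses are invoked explicitly at that point.
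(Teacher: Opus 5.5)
Your proposal is correct and is essentially the paper's proof. In both, $h_\omega$ carries $(n,\varepsilon,\omega)$-separated sets for $f$ to $(n,\delta(\varepsilon),\omega)$-separated sets for $g$ via uniform equicontinuity of $\{h^{-1}_\eta\}_{\eta\in\Omega_0}$, and the reverse inequality follows symmetrically from equicontinuity of $\{h_\eta\}_{\eta\in\Omega_0}$. You are slightly more explicit than the paper about the iterated identity $g^{(k)}_\omega=h_{\sigma^k\omega}\circ f^{(k)}_\omega\circ h^{-1}_\omega$ and about using the $\sigma$-invariance of $\Omega_0$, which the paper relies on only implicitly.
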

\begin{proof}
Let $\varepsilon>0$. Since $\{h^{-1}_\omega\}_{\omega\in \Omega_0}$ is uniformly equicontinuous, there exists $\delta_-(\varepsilon)>0$ such that for all $\eta\in \Omega_0$ and all $u, v\in \mathbb{S}^1$,
$$
d(u, v)<\delta_-(\varepsilon) \quad\Longrightarrow\quad d(h^{-1}_\eta(u), h^{-1}_\eta(v))<\varepsilon.
$$
We claim that every $(n, \varepsilon, \omega)$-separated set for $f$ is sent by $h_\omega$ to an $(n, \delta_-(\varepsilon), \omega)$-separated set for $g$. Let $E\subset \mathbb{S}^1$ be $(n, \varepsilon, \omega)$-separated set, and suppose that $h_\omega(E)$ were not $(n, \delta_-(\varepsilon), \omega)$-separated set. Then there would exist distinct $x, y \in E$ such that
$$
d(g^{(k)}_\omega(h_\omega(x)), g^{(k)}_\omega(h_\omega(y) ))<\delta_-(\varepsilon) \quad \textrm{for all }0\leq k<n.
$$
By the cohomological equivalence, this means
$$
d(h_{\sigma^k\omega}(f^{(k)}_\omega(x)), h_{\sigma^k\omega}(f^{(k)}_\omega(y)))<\delta_-(\varepsilon) \quad \textrm{for all }0\leq k<n.
$$
Using the uniform equicontinuity of the family $\{h^{-1}_\omega\}_{\omega\in\Omega_0}$, we obtain
$d(f^{(k)}_\omega(x), f^{(k)}_\omega(y))<\varepsilon$ for all $0\leq k<n$,
which contradicts the fact that $E$ is $(n, \varepsilon, \omega)$-separated. Therefore
$s(n, \varepsilon, \omega, f)\leq s(n, \delta_-(\varepsilon), \omega, g)$, 
which implies that $h^{\mathrm{(r)}}_{\mathrm{top}}(f, \omega)\leq h^{\mathrm{(r)}}_{\mathrm{top}}(g, \omega)$.

For the reverse inequality, let $\varepsilon>0$. Since the family $\{h_\omega\}_{\omega\in \Omega_0}$ is uniformly equicontinuous, there exists $\delta_+(\varepsilon)>0$ such that for all $\eta\in \Omega_0$ and all $u, v\in \mathbb{S}^1$,
$$
d(u, v)<\delta_+(\varepsilon) \quad\Longrightarrow\quad d(h_\eta(u), h_\eta(v))<\varepsilon.
$$
By a similar argument, every $(n, \varepsilon, \omega)$-separated set for $g$ pulls back under $h^{-1}_\omega$ to an $(n, \delta_+(\varepsilon), \omega)$-separated set for $f$. Hence $s(n, \varepsilon, \omega, g)\leq s(n, \delta_+(\varepsilon), \omega, f)$.
Passing to the limit yields $h^{\mathrm{(r)}}_{\mathrm{top}}(g, \omega)\leq h^{\mathrm{(r)}}_{\mathrm{top}}(f, \omega)$.
Combining the two equalities, we have $h^{\mathrm{(r)}}_{\mathrm{top}}(f, \omega)= h^{\mathrm{(r)}}_{\mathrm{top}}(g, \omega)$ for every $\omega\in\Omega_0$.
\end{proof}

\section{Conclusions and open problems}
In this paper, we established an interval description of the random rotation set for integrable random circle endomorphisms over an ergodic base transformation. More precisely, we showed that the random rotation set is almost surely the compact interval whose endpoints are the mean random rotation numbers of the lower and upper envelopes. We also proved two realisation results: that every value in this interval is realised as an asymptotic average displacement, and that every closed subinterval is realised as the set of accumulation points of the displacement averages along a single orbit.

The proofs combine the random rotation theory for random monotone maps of the circle, the interpolating family construction, the coincidence set mechanism for transferring orbit information from the monotone system to the original random circle endomorphism, and a full-cover construction yielding positive entropy when the random rotation interval has positive length.

\medskip

Several natural questions remain open. The first concerns whether the orbit realisation results can be strengthened to invariant random sets. In the uniquely ergodically-forced setting, Glendinning, J\"ager and Stark~\cite{Glendinningetal2009} proved that each rotation value is realised on a minimal set, and that the dynamics restricted to this minimal set has zero topological entropy. Our realisation theorem is instead an orbit-level statement on a full-measure set. This leads to the following question.

\begin{Q1}
For each value $c\in [\rho(F_L), \rho(F_U)]$, does there exist a random compact subset $K_c$ of $\mathbb{S}^1$ satisfying $f_\omega(K_c(\omega))=K_c(\sigma \omega)$ for almost every $\omega\in\Omega$ such that for every $x\in \pi^{-1}(K_c(\omega))$,
$$
\lim_{n\to \infty}\frac{F^{(n)}_\omega(x)-x}{n}=c \quad \textrm{for a.e. }\omega\in \Omega\,?
$$ 
Can $K_c$ be chosen to be minimal in an appropriate random sense? Is the restricted random topological entropy on $K_c$ zero?
\end{Q1}

The second question concerns the entropy mechanism. In the proof of Theorem \ref{thm:positivetopentropy}, the argument does not produce a canonical random symbolic subsystem, nor does it give a lower bound depending only on the length of the random rotation set.

\begin{Q2}
Under what additional assumptions can the entropy result be upgraded to a random horseshoe or a symbolic random subsystem? More precisely, if $\rho(F_U)>\rho(F_L)$, does there exist, perhaps after passing to an induced random system or a finite cover, a random compact invariant set on which the dynamics factors onto a non-trivial random subshift? Can one obtain quantitative lower bounds for $h_{\mathrm{top}}^{(\mathrm{r})}(f)$ in terms of recurrence data of the full-cover blocks, rather than merely proving positivity?
\end{Q2}

\printbibliography
\end{document}